\newtheorem{theorem}{Theorem}[section]
 \newtheorem{corollary}[theorem]{Corollary}
 \newtheorem{lemma}[theorem]{Lemma}
 \newtheorem{proposition}[theorem]{Proposition}
 \theoremstyle{definition}
 \newtheorem{definition}[theorem]{Definition}
 \theoremstyle{remark}
 \newtheorem{remark}[theorem]{Remark}
 \numberwithin{equation}{section}
\def \bC {\mathbb C}
\def \bH {\mathbb H}
\def \bN {\mathbb N}
\def \bR {\mathbb R}
\def \bZ {\mathbb Z}
\def \cA {\mathcal A}
\def \cB {\mathcal B}
\def \cD {\mathcal D}
\def \cH {\mathcal H}
\def \cI {\mathcal I}
\def \cL {\mathcal L}
\def \cQ {\mathcal Q}
\def \cR {\mathcal R}
\def \cS {\mathcal S}
\def \cR {\mathcal R}
\def \cR {\mathcal R}
\def \cV {\mathcal V}
\def \cX {\mathcal X}
\def \fg {\mathfrak g}
\def \fh {\mathfrak h}
\def \fS {\mathfrak S}
\def \fU {\mathfrak U}
\def \id {\text{\rm I}}
\def \dom {\text{\rm Dom}}
\def \range {\text{\rm Range}}
\def\G{{G}}
\def\sB{{\mathscr B}}
\def \sL{\mathscr L}
\def\Rn{{{\mathbb R}^n}}
\def\Re{{{\rm Re}\,}}
\begin{document}

\title[]
{Sobolev spaces on graded groups}

\author[]
{V\'eronique Fischer and Michael Ruzhansky}

\address{Department of Mathematics,
Imperial College London,
180 Queen's Gate, 
London SW7 2AZ, 
United Kingdom}

\subjclass[2010]{Primary: 43A80; Secondary:  22E30, 35H05, 44A25}                         

\keywords{Harmonic analysis on nilpotent Lie groups, Sobolev spaces, graded groups,
Rockland operators, heat semigroup}

\begin{abstract}
We study the $L^p$-properties  of positive Rockland operators
and define Sobolev spaces on general graded groups.
This generalises the case of 
sub-Laplacians on stratified groups 
studied by G. Folland in \cite{folland_75}.
We show that the defined Sobolev spaces are actually independent
of the choice of a positive Rockland operator. Furthermore, we show
that they are interpolation spaces and establish duality and Sobolev
embedding theorems in this context.
\end{abstract}

\maketitle

\section{Introduction}

One can define Sobolev spaces on $\bR^n$ in various equivalent ways
(see e.g. \cite{mazya}),
for example using the Euclidean Fourier transform on $\bR^n$
or the properties of the Laplace operator.
This can also be done on compact Lie groups 
(see e.g. \cite{rt:book, rt:groups}, also for the corresponding global
theory of pseudo-differential operators).
Replacing the Laplace operator with a (left-invariant) sub-Laplacian 
on a stratified nilpotent Lie group
and using the associated heat semigroup,
Folland showed in  \cite{folland_75} 
that the corresponding spaces 
are different from  their Euclidean (abelian) counterpart
 but share many properties with them.
See also \cite{saka}.
Using Littlewood-Payley decompositions,
this was generalised in \cite{furioli+melzi+veneruso}
to the context of Lie groups of polynomial growth,
which in general do not have a (global) homogeneous structure.

Our purpose here is to define functional  spaces of Sobolev type 
on graded (homogeneous) Lie groups.
This class of groups has proved to be a natural setting 
to generalise several questions of the Euclidean harmonic analysis
and contains many interesting examples:
the abelian Euclidean case of $\bR^n$, the Heisenberg group with its natural structure as a stratified group, and more generally any stratified group, but also for example the Heisenberg group with a different graded non-stratified structure (see, e.g., $\tilde \bH_1$ in Section \ref{subsec_comparison}). 
These groups appear naturally in the geometry of certain symmetric domains and in some sub-elliptic partial differential equations.
In fact,
one can argue easily that the analysis on stratified groups such as in  \cite{folland_75} 
was born out of studying operators based on sums of squares of vector fields 
on certain manifolds, nowadays called Heisenberg manifolds.
Similarly, our present  analysis should help understanding the case of more general operators,
of higher degrees as differential operators and in terms of homogeneity. 

In this paper, we define functional spaces on graded groups and show some important properties such as interpolation, duality, 
adapted Sobolev embeddings and so on.
Hence this justifies our choice of calling these functional spaces Sobolev spaces.
Our construction uses positive Rockland operators
instead of sub-Laplacians 
as the latter are no longer always homogeneous.
Although our analysis is closely related to Folland's  \cite{folland_75},
one important obstacle is the fact that positive Rockland operators may be of high degree, 
and not only 2 as in the case of sub-Laplacians.
This has deep implications. For example, a Rockland operator may not have a unique homogeneous fundamental solution.
Also, the powerful Hunt theorem
\cite{hunt}
 is no longer available for operators of order larger than 2.

Naturally, when we consider a graded group which is stratified, 
we recover the Sobolev spaces defined by Folland in  \cite{folland_75} 
which then coincide with the Sobolev spaces obtained in \cite{furioli+melzi+veneruso}
on any Lie group of polynomial growth.
However, for a general graded (non-stratified) group, 
our Sobolev spaces may differ from the ones in \cite{furioli+melzi+veneruso}, 
see Section \ref{subsec_comparison} in this paper.
They will also be slightly different  from the Goodman-Sobolev spaces defined 
by Goodman 
on graded Lie groups for integer exponents only
in \cite[Sec. III. 5.4]{goodman_LNM},
see again Section \ref{subsec_comparison}.   

The advantage 
of the Sobolev spaces defined in this paper is a collection of
natural functional analytic properties making them useful in applications:
for example, Goodman's versions
\cite{goodman_LNM} are not interpolation spaces while our spaces are. 
Moreover our Sobolev are adapted to the homogeneous structure
(we could in fact also define homogeneous Sobolev spaces, see our last remark in Section \ref{subsec_comparison}).
For instance, homogeneous left-invariant differential operators maps 
Sobolev spaces to other Sobolev spaces with `a loss of derivatives' corresponding to the homogeneous degree. 
This is not the the case in general with  \cite{furioli+melzi+veneruso}.

For the sake of completeness,
our analysis  includes the definition and some properties for the case $p=\infty$.
This is new already for the stratified case, although
this could have been done by adapting Folland's methods.

\medskip

This paper is organised as follows.
After some preliminaries about graded groups and their homogeneous structure in Section \ref{sec_preliminary},
we first define the fractional powers of a positive Rockland operator in Section \ref{SEC:powers-Rockland}, as well as its Riesz and Bessel potentials.
This enables us to define our Sobolev spaces in Section \ref{SEC:Sobolev},
where we also show that they satisfy the properties expected from Sobolev spaces,
e.g. interpolation and duality amongst others.
In the last section, we show a `Sobolev' embedding theorem and we also compare our Sobolev spaces with other known spaces.

\section{Preliminaries}
\label{sec_preliminary}

In this section, after defining graded Lie groups, 
we recall their homogeneous structure as well as the definition and some properties of
their Rockland operators.

\subsection{Graded and homogeneous groups}

Here we recall briefly the definition of graded nilpotent Lie groups
and their natural homogeneous structure.
A complete description of the notions of graded and homogeneous nilpotent Lie groups may be found in \cite[ch1]{folland+stein_bk82}.

We will be concerned with graded Lie groups $G$
which means that $G$ is a connected and simply connected 
Lie group 
whose Lie algebra $\mathfrak g$ 
admits an $\bN$-gradation
$\mathfrak g= \oplus_{\ell=1}^\infty \mathfrak g_{\ell}$
where the $\mathfrak g_{\ell}$, $\ell=1,2,\ldots$, 
are vector subspaces of $\mathfrak g$,
all but finitely many equal to $\{0\}$,
and satisfying 
$[\mathfrak g_{\ell},\mathfrak g_{\ell'}]\subset\mathfrak g_{\ell+\ell'}$
for any $\ell,\ell'\in \bN$.
This implies that the group $G$ is nilpotent.
Examples of such groups are the Heisenberg group and, more generally,
all stratified groups (which by definition correspond to the case $\fg_1$ generating the full Lie algebra $\fg$).

We construct a basis $X_1,\ldots, X_n$  of $\fg$ adapted to the gradation,
by choosing a basis $\{X_1,\ldots X_{n_1}\}$ of $\mathfrak g_1$ (this basis is possibly reduced to $\{0\}$), then 
$\{X_{n_1+1},\ldots,  X_{n_1+n_2}\}$ a basis of $\mathfrak g_2$
(possibly $\{0\}$ as well as the others)
and so on.
Via the exponential mapping $\exp_G : \mathfrak g \to G$, we   identify 
the points $(x_{1},\ldots,x_n)\in \bR^n$ 
 with the points  $x=\exp_G(x_{1}X_1+\cdots+x_n X_n)$ in $G$.
Consequently we allow ourselves to denote by $C(G)$, $\cD(G)$ and $\cS(G)$ etc,
the spaces of continuous functions, of smooth and compactly supported functions or 
of Schwartz functions on $G$ identified with $\bR^n$,
and similarly for distributions with the duality notation 
$\langle \cdot,\cdot\rangle$.

This basis also leads to a corresponding Lebesgue measure on $\mathfrak g$ and the Haar measure $dx$ on the group $G$,
hence $L^p(G)\cong L^p(\bR^n)$.
The group convolution of two functions $f$ and $g$, 
for instance   integrable, 
is defined via 
$$
 (f*g)(x):=\int_\G f(y) g(y^{-1}x) dy.
$$
The convolution is not commutative: in general, $f*g\not=g*f$.
However, apart from the lack of commutativity,  
group convolution and the usual convolution on $\Rn$
share many properties. For example, we have
\begin{equation}
\langle f * g, h \rangle
=\langle f ,h*\tilde g\rangle,
\quad\mbox{with}\quad \tilde g(x)=g(x^{-1})
\label{eq_int_f*gh_f}.
\end{equation}
And the Young convolutions inequalities hold:
if $f_1\in L^p(G)$ and $f_2\in L^q(G)$
with $1\leq p,q,r\leq \infty$ 
and $\frac 1p+\frac 1q=\frac 1r+1$,
 then $f_1*f_2\in L^r(G)$ and
\begin{equation}
\label{eq_Young_ineq}
\|f_1*f_2\|_r\leq \|f_1\|_p \|f_2\|_q.
\end{equation}

The coordinate function $x=(x_1,\ldots,x_n)\in G\mapsto x_j \in \bR$
is denoted by $x_j$.
More generally we define for every multi-index $\alpha\in \bN_0^n$,
$x^\alpha:=x_1^{\alpha_1} x_2 ^{\alpha_2}\ldots x_{n}^{\alpha_n}$, 
as a function on $G$.
Similarly we set
$X^{\alpha}=X_1^{\alpha_1}X_2^{\alpha_2}\cdots
X_{n}^{\alpha_n}$ in the universal enveloping Lie algebra $\fU(\fg)$ of $\mathfrak g$.

For any $r>0$, 
we define the  linear mapping $D_r:\mathfrak g\to \mathfrak g$ by
$D_r X=r^\ell X$ for every $X\in \mathfrak g_\ell$, $\ell\in \bN$.
Then  the Lie algebra $\mathfrak g$ is endowed 
with the family of dilations  $\{D_r, r>0\}$
and becomes a homogeneous Lie algebra in the sense of 
\cite{folland+stein_bk82}.
We re-write the set of integers $\ell\in \bN$ such that $\fg_\ell\not=\{0\}$
into the increasing sequence of positive integers
 $\upsilon_1,\ldots,\upsilon_n$ counted with multiplicity,
 the multiplicity of $\fg_\ell$ being its dimension.
 In this way, the integers $\upsilon_1,\ldots, \upsilon_n$ become 
 the weights of the dilations and we have $D_r X_j =r^{\upsilon_j} X_j$, $j=1,\ldots, n$,
 on the chosen basis of $\fg$.
 The associated group dilations are defined by
$$
D_r(x)=
r\cdot x
:=(r^{\upsilon_1} x_{1},r^{\upsilon_2}x_{2},\ldots,r^{\upsilon_n}x_{n}),
\quad x=(x_{1},\ldots,x_n)\in G, \ r>0.
$$
In a canonical way  this leads to the notions of homogeneity for functions and operators.
For instance
the degree of homogeneity of $x^\alpha$ and $X^\alpha$,
viewed respectively as a function and a differential operator on $G$, is 
$[\alpha]=\sum_j \upsilon_j\alpha_{j}$.
Indeed, let us recall 
that a vector of $\mathfrak g$ defines a left-invariant vector field on $G$ 
and, more generally, 
that the universal enveloping Lie algebra of $\mathfrak g$ 
is isomorphic with the left-invariant differential operators; 
we keep the same notation for the vectors and the corresponding operators. 

Recall that a \emph{homogeneous pseudo-norm} on $G$ is a continuous function $|\cdot| : G\rightarrow [0,+\infty)$ homogeneous of degree 1
on $G$ which vanishes only at 0. This often replaces the Euclidean pseudo-norm in the analysis on homogeneous Lie groups:
\begin{proposition}
\label{prop_homogeneous_pseudo_norm}
\begin{enumerate}
\item 
Any homogeneous pseudo-norm $|\cdot|$ on $G$ satisfies a triangle inequality up to a constant:
$$
\exists C\geq 1 \quad \forall x,y\in G\quad
|xy|\leq C (|x|+|y|).
$$
It partially satisfies the reverse triangle inequality:
\begin{equation}
\label{eq_reverse_triangle}
\forall b\in (0,1) \quad
\exists C=C_b\geq 1 \quad \forall x,y\in G
\quad |y|\leq b |x| \Longrightarrow
\big| |xy| - |x|\big|\leq C|y|
.
\end{equation}
\item
Any two homogeneous pseudo-norms $|\cdot|_1$ and $|\cdot|_2$ are equivalent in the sense that
$$
\exists C>0 \quad \forall x\in G\quad
C^{-1} |x|_2\leq |x|_1\leq C |x|_2.
$$
\item 
 A concrete example of a homogeneous pseudo-norm is given via
$$
|x|_{\nu_o}:=\Big(\sum_{j=1}^n x_j^{2\nu_o/\upsilon_j}\Big)^{1/{2\nu_o}},
$$
with $\nu_o$ a common multiple to the weights $\upsilon_1,\ldots,\upsilon_n$.
\end{enumerate}
\end{proposition}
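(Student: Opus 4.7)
My plan is to prove parts (3), (2), (1) in that order, so each can feed the next.

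Part (3) is a direct verification from the formula. Since $\nu_o$ is a common multiple of the weights $\upsilon_j$, every exponent $2\nu_o/\upsilon_j$ is an even positive integer; thus $x\mapsto\sum_j x_j^{2\nu_o/\upsilon_j}$ is a non-negative polynomial on $\bR^n$ that vanishes only at $0$, making $|\cdot|_{\nu_o}$ continuous, positive off the origin, and homogeneous of degree $1$ (the $r^{\upsilon_j}$ factors in $r\cdot x$ combine into an overall $r^{2\nu_o}$ under the $2\nu_o$-th root). For part (2), the pivotal observation is that the unit ``sphere'' $S=\{x:|x|_{\nu_o}=1\}$ is compact in $\bR^n\setminus\{0\}$: it is closed as a continuous preimage, and the constraint $|x_j|^{2\nu_o/\upsilon_j}\le 1$ on $S$ forces $|x_j|\le 1$. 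Any other pseudo-norm $|\cdot|_{*}$ is continuous and strictly positive on $S$, hence attains a positive minimum $m$ and finite maximum $M$; writing $x\neq 0$ as $x=r\cdot x_0$ with $r=|x|_{\nu_o}$ and $x_0\in S$, homogeneity yields $|x|_{*}=r|x_0|_{*}\in[m|x|_{\nu_o},M|x|_{\nu_o}]$, and applying this to two pseudo-norms gives their pairwise equivalence.

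For the direct quasi-triangle inequality in (1), the equivalence from (2) allows me to work with $|\cdot|_{\nu_o}$. The product $B\times B$, where $B=\{|x|_{\nu_o}\le 1\}$, is a compact subset of $\bR^n\times\bR^n$, so continuity of $(x,y)\mapsto|xy|_{\nu_o}$ furnishes a uniform bound $|xy|_{\nu_o}\le C_0$ there. For $(x,y)\ne(0,0)$ set $r=\max(|x|_{\nu_o},|y|_{\nu_o})$, so $(r^{-1}\cdot x,\,r^{-1}\cdot y)\in B\times B$; since dilations are group automorphisms, $(r^{-1}\cdot x)(r^{-1}\cdot y)=r^{-1}\cdot(xy)$, and homogeneity gives $|xy|_{\nu_o}=r\,|(r^{-1}\cdot x)(r^{-1}\cdot y)|_{\nu_o}\le C_0 r\le C_0(|x|_{\nu_o}+|y|_{\nu_o})$.

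The reverse inequality is the main technical step; I would establish it first for $|\cdot|_{\nu_o}$ and then transfer via (2). Homogeneity reduces matters to $|x|_{\nu_o}=1$ and $|y|_{\nu_o}\le b<1$. The Baker-Campbell-Hausdorff expansion in canonical coordinates adapted to the gradation writes $(xy)_j=x_j+y_j+Q_j(x,y)$, where each monomial $c\,x^\alpha y^\beta$ of $Q_j$ has weighted bidegree $[\alpha]+[\beta]=\upsilon_j$ with $|\alpha|,|\beta|\ge 1$. Coupled with the coordinatewise estimate $|y_k|\le|y|_{\nu_o}^{\upsilon_k}$ read off the defining formula, the boundedness of $|x_k|$ on $\{|x|_{\nu_o}=1\}$, and the elementary inequality $|y|_{\nu_o}^{[\beta]}\le|y|_{\nu_o}$ valid for $[\beta]\ge 1$ and $|y|_{\nu_o}\le 1$, this gives the Euclidean bound $|(xy)_j-x_j|\le C|y|_{\nu_o}$. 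A mean-value argument applied term by term to the polynomial identity $|xy|_{\nu_o}^{2\nu_o}-1=\sum_j\bigl((xy)_j^{2\nu_o/\upsilon_j}-x_j^{2\nu_o/\upsilon_j}\bigr)$ then produces $\bigl||xy|_{\nu_o}^{2\nu_o}-1\bigr|\le C'|y|_{\nu_o}$, and the factorisation $a^{2\nu_o}-1=(a-1)\sum_{k=0}^{2\nu_o-1}a^k$ with the trivial lower bound $\ge 1$ on the second factor (since one summand is $1$) converts this to $\bigl||xy|_{\nu_o}-1\bigr|\le C'|y|_{\nu_o}$; dilation then delivers the statement for arbitrary $x$. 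The subsequent transfer to a general pseudo-norm via the equivalence of (2) is the delicate step and my principal concern: the estimate passes cleanly for $b$ below a threshold set by the equivalence constants, while the complementary range of $b$ close to $1$ needs to be handled by splitting the region $|y|\le b|x|$ according to whether the ratio $|y|_{\nu_o}/|x|_{\nu_o}$ is small, combining the reverse inequality for $|\cdot|_{\nu_o}$ in the ``small'' subregion with the direct quasi-triangle of (1) in the ``comparable'' subregion.
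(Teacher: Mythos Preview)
The paper does not actually supply a proof of this proposition: it is stated as recalled background (the surrounding preliminaries defer to \cite{folland+stein_bk82}), so there is no ``paper's proof'' to compare against. Your arguments for (3), (2), and the quasi-triangle inequality in (1) are correct and are the standard ones; your BCH/mean-value proof of the reverse inequality \eqref{eq_reverse_triangle} for the specific norm $|\cdot|_{\nu_o}$ is also correct.

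The genuine gap is exactly where you flag it: the transfer of \eqref{eq_reverse_triangle} from $|\cdot|_{\nu_o}$ to an arbitrary homogeneous pseudo-norm via the equivalence of (2). Your proposed case split does handle the ``comparable'' region $|y|_{\nu_o}\gtrsim|x|_{\nu_o}$ by the direct quasi-triangle, but in the ``small'' region you only obtain $\big||xy|_{\nu_o}-|x|_{\nu_o}\big|\le C|y|_{\nu_o}$, and the equivalence constants do \emph{not} convert this into a bound on $\big||xy|-|x|\big|$: equivalence controls each of $|xy|$ and $|x|$ separately, not their difference. In fact, under the paper's stated definition (continuity only), the reverse inequality is false in general. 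Already on $G=\bR^2$ with the standard dilations, set $|x|=r\,f(\theta)$ in polar coordinates with $f$ continuous, positive, $2\pi$-periodic but not Lipschitz, say $f(\theta)=1+|\theta|^{1/2}$ near $\theta=0$; then for $x=(1,0)$ and $y=(0,\epsilon)$ one has $|y|\sim\epsilon$ while $\big||x+y|-|x|\big|\sim\epsilon^{1/2}$, so no constant $C_b$ can work.

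What this means is that \eqref{eq_reverse_triangle} requires some regularity of $|\cdot|$ beyond continuity (e.g.\ $C^1$ away from the origin, which is how Folland--Stein set things up and which your $|\cdot|_{\nu_o}$ certainly enjoys). The paper only ever invokes \eqref{eq_reverse_triangle} after fixing the explicit norm of part (3) (see the proof of Proposition~\ref{prop_heat_kernel_semigroup_Lp}), so your argument for $|\cdot|_{\nu_o}$ already covers every use made of it; you should simply drop the attempted transfer, or add a smoothness hypothesis and replace the transfer by a direct mean-value argument for $|\cdot|$ itself.
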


 Various aspects of analysis on $G$ can be developed in a comparable way with the Euclidean setting, see \cite{coifman+weiss-LNM71}, sometimes replacing the topological dimension 
$$
n:=\dim G =\sum_{\ell=1}^\infty\dim \fg_\ell
$$
of the group $G$ by its \emph{homogeneous dimension}
$$
Q:=\sum_{\ell=1}^\infty \ell \dim \fg_\ell
=  \upsilon_1 +\upsilon_2 +\ldots +\upsilon_n 
 .
$$

For example, there is an analogue of polar coordinates on homogeneous groups with $Q$ replacing $n$:

\begin{proposition}
\label{prop_polar_coord}
Let $|\cdot|$ be a fixed homogeneous pseudo-norm on $G$.
Then there is a (unique) positive Borel measure $\sigma$ on 
the unit sphere 
$\fS:=\{x\in G\, : \, |x|=1\}$,
such that for all $f\in L^1(G)$, we have
\begin{equation}
\label{formula_polar_coord}
\int_G f(x)dx
=
\int_0^\infty \int_\fS f(ry) r^{Q-1} d\sigma(y) dr
.
\end{equation}
\end{proposition}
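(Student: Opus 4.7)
The strategy is to construct $\sigma$ explicitly on the compact sphere $\fS$, verify the integration formula first for indicators of ``polar cones'', and then extend by standard measure-theoretic arguments. The main tool will be the homogeneity of Haar measure: the dilation $D_r$ satisfies $\int_G f(r\cdot x)\, dx = r^{-Q}\int_G f(x)\, dx$, equivalently $|D_r(A)| = r^Q |A|$ for any Borel $A \subset G$. As a preliminary I would observe that $\Phi : (0,\infty) \times \fS \to G \setminus \{0\}$ defined by $\Phi(r,y) = r \cdot y$ is a homeomorphism, with continuous inverse $x \mapsto (|x|, \, |x|^{-1} \cdot x)$; this follows from the continuity and homogeneity of $|\cdot|$. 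Note that $\fS$ is compact, because $|\cdot|$ is continuous on $\bR^n$ and comparable, by Proposition \ref{prop_homogeneous_pseudo_norm}(2), to the explicit proper pseudo-norm $|\cdot|_{\nu_o}$.

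Next, for each Borel $E \subset \fS$ I would define the cone $C(E,R) := \{r \cdot y : 0 < r \leq R,\ y \in E\} = \Phi((0,R] \times E)$ and set
$$
\sigma(E) := Q \cdot |C(E,1)|,
$$
where $|\cdot|$ now denotes Haar measure of a Borel set. The countable additivity of Haar measure, together with the bijectivity of $\Phi$, makes $\sigma$ a finite positive Borel measure on $\fS$. Using $C(E,R) = D_R(C(E,1))$ and the dilation law above,
$$
|C(E,R)| = R^Q |C(E,1)| = \sigma(E) \int_0^R r^{Q-1}\, dr.
$$
This proves \eqref{formula_polar_coord} for $f = \chi_{C(E,R)}$, and by taking differences also for $f = \chi_{C(E,b) \setminus C(E,a)}$ with $0 < a < b$.

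To conclude, I would note that the sets $C(E,b) \setminus C(E,a)$ generate the Borel $\sigma$-algebra of $G \setminus \{0\}$ via the homeomorphism $\Phi$ and the product Borel structure on $(0,\infty) \times \fS$, while $\{0\}$ is Haar-null. Hence the identity extends to all Borel indicators by the $\pi$-$\lambda$ theorem, then to simple functions by linearity, to nonnegative measurable functions by monotone convergence, and finally to $L^1(G)$ by writing $f = f^+ - f^-$. Uniqueness is immediate: any candidate $\sigma'$ applied to $f = \chi_{C(E,1)}$ is forced to equal $Q|C(E,1)| = \sigma(E)$. The main technical nuisance (rather than a serious obstacle) is bookkeeping the Borel measurability: one must check that $E \mapsto |C(E,1)|$ is actually countably additive on Borel subsets of $\fS$, which reduces via $\Phi$ to countable additivity of Haar measure on the corresponding cones in $G$. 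Beyond that, the argument is the standard Euclidean polar-coordinates proof with $Q$ replacing $n$.
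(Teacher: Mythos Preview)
Your proof is correct and is essentially the standard argument (the one given, e.g., in Folland--Stein \cite[Proposition 1.15]{folland+stein_bk82}). Note, however, that the paper does not actually prove Proposition~\ref{prop_polar_coord}: it is stated in the preliminaries section as a known background fact, alongside several other unproved propositions about homogeneous groups, with the implicit reference being \cite{folland+stein_bk82}. So there is no ``paper's own proof'' to compare against; you have simply supplied the classical proof of a quoted result.
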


Another example is the following  property regarding kernels or operators  of type $\nu$
(see  \cite{folland_75} and \cite[Chapter 6 A]{folland+stein_bk82}):
\begin{definition}
\label{def_kernel_type_nu}
A distribution $\kappa\in \cD'(G)$ 
which is smooth away from the origin and  homogeneous of degree $\nu-Q$
is called a \emph{kernel of type} $\nu\in \bC$ on $G$.
The corresponding convolution operator
$f\in \cD(G)\mapsto f*\kappa$ is called an \emph{operator of type} $\nu$.
\end{definition}

\begin{theorem}
\label{thm_op_type_Lp_bdd}
An operator of type $\nu$ with $\nu\in [0,Q)$
is $(-\nu)$-homogeneous  
and
extends to a bounded operator from $L^p(G)$ to $L^q(G)$
whenever $p,q\in (1,\infty)$ satisfy
$\frac 1p -\frac 1q = \frac {\Re \nu}Q$.
\end{theorem}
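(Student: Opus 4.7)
The plan is to split the theorem into two independent pieces: the $(-\nu)$-homogeneity of the operator $T: f \mapsto f*\kappa$, which is a direct algebraic manipulation, and the $L^p$--$L^q$ estimate, which reduces via a pointwise control on $\kappa$ to a weak-type Young inequality on $G$.

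For the homogeneity I would proceed by a change of variable in the convolution. Starting from $(Tf)(D_r x) = \int f(y)\,\kappa(y^{-1} D_r x)\,dy$, the substitution $y = D_r z$ contributes a Jacobian $r^Q$, and since $D_r$ is a group automorphism one rewrites $\kappa((D_r z)^{-1} D_r x) = \kappa(D_r(z^{-1} x)) = r^{\nu - Q}\kappa(z^{-1} x)$ by the defining homogeneity of $\kappa$. The factor $r^Q$ cancels, leaving $(Tf)\circ D_r = r^\nu\, T(f \circ D_r)$, which is exactly the required $(-\nu)$-homogeneity.

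For the boundedness I would first establish the pointwise bound $|\kappa(x)| \leq C\,|x|^{\Re\nu - Q}$: since $\kappa$ is smooth on $G\setminus\{0\}$, it is bounded on the compact unit sphere $\fS = \{|x|=1\}$, and its $(\nu - Q)$-homogeneity transports this bound to all of $G\setminus\{0\}$. Next, assuming $\Re\nu > 0$ and setting $q_0 := Q/(Q - \Re\nu)$, polar coordinates (Proposition \ref{prop_polar_coord}) give the distribution-function identity
\[
|\{x \in G : |x|^{\Re\nu - Q} > \lambda\}| = c\,\lambda^{-q_0}, \qquad \lambda > 0,
\]
so the majorant lies in the weak Lebesgue space $L^{q_0,\infty}(G)$. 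Combining this with the weak Young convolution inequality $L^p * L^{q_0,\infty} \hookrightarrow L^q$, valid for $1 < p,q < \infty$ with $1/p + 1/q_0 = 1/q + 1$, yields $\|f * \kappa\|_q \leq C \|f\|_p$ under exactly the stated hypothesis $1/p - 1/q = \Re\nu/Q$.

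The main obstacle is the weak Young inequality on $G$ itself: although classical on $\Rn$, one has to justify it using only the homogeneous-group machinery, via Marcinkiewicz interpolation between the trivial $L^\infty$ endpoint and a weak endpoint obtained from a level-set decomposition of the convolution (which is why the strong Young inequality \eqref{eq_Young_ineq} is not enough). The borderline case $\Re\nu = 0$, where $q_0 = 1$ and weak Young would only give weak $L^p$, has to be treated separately by Calder\'on--Zygmund theory on the space of homogeneous type $(G, |\cdot|, dx)$, exploiting the doubling property of Haar measure for the quasi-balls of $|\cdot|$; this handles the singular-integral endpoint and completes the proof.
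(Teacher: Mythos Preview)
The paper does not actually prove this theorem: it is stated in the preliminaries as a known background result, with citations to \cite{folland_75} and \cite[Chapter~6.A]{folland+stein_bk82}. So there is no ``paper's own proof'' to compare against, and your proposal is to be judged on its own merits as a sketch of the classical argument.

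Your treatment of the homogeneity and of the case $\Re\nu\in(0,Q)$ is correct and standard: the pointwise bound $|\kappa(x)|\le C|x|^{\Re\nu-Q}$ reduces matters to the Hardy--Littlewood--Sobolev inequality on $G$, which is exactly the weak-Young argument you describe. This is how it is done in the cited references.

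There is, however, a genuine gap in the endpoint $\Re\nu=0$. You invoke Calder\'on--Zygmund theory on the space of homogeneous type $(G,|\cdot|,dx)$, and the doubling property and the H\"ormander-type regularity of the kernel are indeed available from smoothness plus homogeneity. But Calder\'on--Zygmund theory takes $L^2$-boundedness as an \emph{input}, not an output: it upgrades a known $L^2$ bound to all $L^p$, $1<p<\infty$, via the weak $(1,1)$ estimate and duality. You have not indicated how to obtain the $L^2$ bound in the first place, and on a non-abelian homogeneous group this is the substantial step---there is no elementary Fourier-transform argument as on $\Rn$. In the references this is handled by an almost-orthogonality argument (a dyadic decomposition of $\kappa$ combined with the Cotlar--Stein lemma); see \cite[Chapter~6.A, especially the proof of Theorem~6.19 via Theorem~6.20]{folland+stein_bk82}. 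Without supplying this ingredient, the $\Re\nu=0$ case is incomplete.
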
 

Exactly as in the Euclidean setting,
in the case  $\Re \nu\in (0,Q)$, 
any smooth function away from the origin which is $(\nu-Q)$-homogeneous
defines a distribution.
However in the case $\Re\nu=0$,
one needs to add a condition to guarantee the same property:
\begin{proposition}
\label{prop_-Qhomo_distrib}
Let $\kappa$ be a smooth function away from the origin homogeneous of degree $\nu$ with $\Re \nu=-Q$.
It coincides with the restriction to $G\backslash\{0\}$
of a distribution in $\cD'(G)$
if and only if its mean value is zero, 
that is, 
when 
$\int_{\fS} \kappa \, d\sigma=0$ where $\sigma$ is the measure on the unit sphere $\fS$ of a homogeneous pseudo-norm given 
by the polar change of coordinates, see Proposition \ref{prop_polar_coord}.
This condition is independent of the choice of a homogeneous pseudo-norm.
\end{proposition}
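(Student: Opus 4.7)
The plan is to prove the two implications via polar coordinates (Proposition \ref{prop_polar_coord}) together with a principal-value truncation at $\{|x|>\epsilon\}$; the distribution extension in the statement is implicitly taken to be homogeneous of degree $\nu$, as is natural for kernels of type $\nu+Q$, and the independence from the choice of pseudo-norm will follow automatically from the intrinsic nature of the extension property.

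For the sufficiency I will fix $\chi \in \cD(G)$ with $\chi(0)=1$ and decompose each $\phi \in \cD(G)$ as $\phi = \phi(0)\chi + \psi$ with $\psi(0) = 0$. A Taylor expansion in Euclidean coordinates combined with the bound $|x_j| \lesssim |x|^{\upsilon_j}$ visible from the pseudo-norm of Proposition \ref{prop_homogeneous_pseudo_norm}(3) yields $|\psi(x)| \lesssim |x|^{\upsilon_1}$ for $|x|$ small, so that $\kappa\psi\in L^1(G)$. For the remaining piece, polar coordinates recast $\int_{|x|>\epsilon}\kappa(x)\chi(x)\,dx$ as $\int_\epsilon^\infty r^{-1+i\Im\nu}F(r)\,dr$, where $F(r):=\int_\fS\kappa(y)\chi(r\cdot y)\,d\sigma(y)$ is smooth at $r=0$ with $F(0)=\chi(0)\int_\fS\kappa\,d\sigma$; the mean-zero hypothesis makes $F(0)=0$ and hence $F(r)=O(r)$, so the limit as $\epsilon\to 0$ exists. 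Setting
\begin{equation*}
\langle\tilde\kappa,\phi\rangle := \phi(0)\lim_{\epsilon\to 0}\int_{|x|>\epsilon}\kappa\chi\,dx + \int_G \kappa\psi\,dx
\end{equation*}
and checking $\cD(G)$-continuity (controlled by $|\phi(0)|$ and finitely many Euclidean $C^k$-seminorms on a fixed compact set) produces the desired $\tilde\kappa \in \cD'(G)$.

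For the necessity I will assume $\tilde\kappa \in \cD'(G)$ is a homogeneous extension of degree $\nu$ and test it against $\eta_r := \chi\circ D_{r^{-1}} - \chi$. This test function vanishes on the neighborhood of $0$ where both summands equal $1$, so $\langle\tilde\kappa,\eta_r\rangle$ reduces to the ordinary integral $\int_G \kappa\,\eta_r\,dx$; evaluating the latter by truncation at $\{|x|>\delta\}$, the change of variable $y=r^{-1}\cdot x$ in the first summand (contributing $r^{\nu+Q}$), and polar coordinates, the individually divergent truncated integrals cancel and leave a residue of $c\log r$ in the critical case $\nu=-Q$, where $c:=\int_\fS\kappa\,d\sigma$. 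Homogeneity of $\tilde\kappa$ independently gives $\langle\tilde\kappa,\eta_r\rangle = (r^{\nu+Q}-1)\langle\tilde\kappa,\chi\rangle=0$, so matching the two expressions forces $c\log r = 0$ for all $r>0$, hence $c=0$. The hardest part will be the bookkeeping for this cancellation: the truncated integrals $\int_{|x|>\delta}\kappa\chi\,dx$ diverge logarithmically as $\delta\to 0$, and isolating the finite residue $c\log r$ requires a precise asymptotic expansion based on the smoothness of $F$ at $0$.
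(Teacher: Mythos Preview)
The paper does not prove Proposition~\ref{prop_-Qhomo_distrib}; it is quoted in the preliminaries section as a known fact from the theory of homogeneous groups (the implicit reference is \cite{folland+stein_bk82}, Chapter~1). So there is no proof in the paper to compare against, and I assess your argument on its own.

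Your sufficiency argument is correct and is essentially the standard one: the split $\phi=\phi(0)\chi+\psi$, the estimate $|\psi(x)|\lesssim|x|^{\upsilon_1}$ near $0$, and the mean-zero hypothesis giving $F(0)=0$ combine to make the principal-value limit exist and define a genuine distribution extending $\kappa$. You should add one sentence showing the definition is independent of $\chi$ (two choices differ by a function vanishing at $0$, hence absorbed into the $\psi$-term), and you should note that for the necessity argument you need $\chi\equiv 1$ near $0$, not merely $\chi(0)=1$.

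Your necessity argument, however, only treats the case $\nu=-Q$ (real), as you yourself flag (``the critical case $\nu=-Q$''): the identity $(r^{\nu+Q}-1)\langle\tilde\kappa,\chi\rangle=0$ needs $\nu+Q=0$, not merely $\Re(\nu+Q)=0$. This is not a repairable oversight but a genuine limitation of the statement itself. When $\nu=-Q+i\tau$ with $\tau\neq 0$, your own cancellation scheme yields $\langle\tilde\kappa,\eta_r\rangle=(r^{i\tau}-1)\bigl[c/(i\tau)+A\bigr]$ for some finite $A$, which is perfectly compatible with homogeneity and imposes no constraint on $c$. Indeed the ``only if'' direction is \emph{false} for $\tau\neq 0$: already on $\bR^n$ the analytic continuation of $|x|^s$ to $s=-n+i\tau$ is a homogeneous distribution of degree $-n+i\tau$ whose restriction to $\bR^n\setminus\{0\}$ has spherical mean $|S^{n-1}|\neq 0$. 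So your restriction to $\nu=-Q$ is the correct scope of the necessity claim, and the paper's phrasing ``$\Re\nu=-Q$'' in the ``if and only if'' is an overstatement. This does no harm to the paper, since only the sufficiency direction is ever used (in the proof of Lemma~\ref{lem_kernel_cRitau}).
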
 

The problems about (group) convolving distributions 
on $G$ are essentially the same as 
in the case of the abelian convolution on $\bR^n$.
The convolution $\tau_1*\tau_2$ of two distributions $\tau_1,\tau_2\in \cD'(G)$ is well defined as a distribution provided that at most one of them has compact support.
However, additional assumptions must be imposed in order to define convolutions of distributions with non-compact supports.
Furthermore, the associativity of the group convolution product 
  law 
  \begin{equation}
  \label{eq_associative_convolution}
(\tau_1*\tau_2)*\tau_3=\tau_1*(\tau_2*\tau_3)
,
\end{equation}
holds when at most one of the $\tau_j$'s has non-compact support
but not necessarily when only one of the $\tau_j$'s has compact support 
even if each convolution in \eqref{eq_associative_convolution} 
could have a meaning.
 
 The following proposition establishes that there is no such pathology appearing when considering convolution with kernel of type $\nu$ with $\Re \nu\in [0,Q)$. This will be useful in the sequel.
\begin{proposition}
\label{prop_conv_kernel_typnu}
Let $G$ be a homogeneous group.
\begin{itemize}
\item[(i)]
Suppose $\nu\in \bC$
with $0\leq \Re \nu<Q$, $p\geq 1$, $q>1$,
and $r\geq 1$ given by $\frac 1r=\frac 1p+\frac 1q-\frac{\Re \nu}Q-1$.
If $\kappa$ is a kernel of type $\nu$, $f\in L^p(G)$,
and $g\in L^q(G)$, then $f*(g*\kappa)$ and $(f*g)*\kappa$ are well defined as elements of $L^r(G)$, and they are equal.
\item[(ii)]
Suppose $\kappa_{1}$ is a kernel of type ${\nu_1}\in \bC$
with $\Re {\nu_1}>0$ and $\kappa_{2}$ is a kernel of type ${\nu_2}\in \bC$ 
with $\Re{\nu_2}\geq 0$.
We assume $\Re({\nu_1}+{\nu_2})<Q$.
Then $\kappa_{1}*\kappa_{2}$ is well defined as a kernel of type 
${\nu_1}+{\nu_2}$.
Moreover if $f\in L^p(G)$ where $1<p<Q/(\Re({\nu_1}+{\nu_2}))$
then $(f*\kappa_{1})*\kappa_{2}$ and $f*(\kappa_{1}*\kappa_{2})$
belong to $ L^q(G)$, $\frac 1q=\frac 1p -\frac{\Re({\nu_1}+{\nu_2})}Q$,
 and they are equal.
\end{itemize}
\end{proposition}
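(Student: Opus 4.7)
The plan for Part (i) is first to verify that both expressions belong to $L^r(G)$, then prove their equality by approximation. For well-definedness, Theorem~\ref{thm_op_type_Lp_bdd} yields $g*\kappa\in L^{q_1}(G)$ with $\tfrac{1}{q_1}=\tfrac{1}{q}-\tfrac{\Re\nu}{Q}$, and Young's inequality \eqref{eq_Young_ineq} then places $f*(g*\kappa)$ in $L^r(G)$ because $\tfrac{1}{p}+\tfrac{1}{q_1}-1=\tfrac{1}{r}$; symmetrically, Young gives $f*g\in L^s(G)$ with $\tfrac{1}{s}=\tfrac{1}{p}+\tfrac{1}{q}-1$, and a second application of Theorem~\ref{thm_op_type_Lp_bdd} yields $(f*g)*\kappa\in L^r(G)$ with the same exponent. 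For equality I would pick sequences $f_k,g_k\in C_c^\infty(G)$ with $f_k\to f$ in $L^p$ and $g_k\to g$ in $L^q$; for any $h\in C_c^\infty(G)$ the duality relation \eqref{eq_int_f*gh_f} rewrites both pairings against $h$ as $\langle\kappa,\tilde g_k*\tilde f_k*h\rangle$, because the three-fold convolution of smooth compactly supported functions is associative in the ordinary sense. The continuity estimates just established then let me pass to the limit $k\to\infty$.

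For Part (ii), the heart of the matter is constructing $\kappa_1*\kappa_2$ as a distribution. I would fix $\chi\in C_c^\infty(G)$ equal to $1$ near $0$ and split $\kappa_j=\kappa_j^0+\kappa_j^\infty$ with $\kappa_j^0=\chi\kappa_j$ compactly supported and $\kappa_j^\infty=(1-\chi)\kappa_j$ smooth on $G$, satisfying the pointwise bound $|\kappa_j^\infty(x)|\lesssim|x|^{\Re\nu_j-Q}$ at infinity. Three of the four resulting cross-products involve one compactly supported factor and are standard distributions; for $\kappa_1^\infty*\kappa_2^\infty$ the assumption $\Re(\nu_1+\nu_2)<Q$ makes the defining integral absolutely convergent. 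Summing the four pieces defines $\kappa_1*\kappa_2$, and one checks independence of $\chi$ routinely. Smoothness away from the origin follows from the standard principle $\singsupp(\kappa_1*\kappa_2)\subset\singsupp(\kappa_1)\cdot\singsupp(\kappa_2)\subset\{0\}$, and $(\nu_1+\nu_2-Q)$-homogeneity follows from the change of variable $y=r\cdot z$ using $d(r\cdot z)=r^Q\,dz$, giving $(\kappa_1*\kappa_2)(r\cdot x)=r^{\nu_1+\nu_2-Q}(\kappa_1*\kappa_2)(x)$.

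With this kernel in hand, the equality $(f*\kappa_1)*\kappa_2=f*(\kappa_1*\kappa_2)$ in $L^q(G)$ is proved exactly as in Part (i): approximate $f$ by $C_c^\infty(G)$ functions, where both sides reduce to the associativity of three smooth compactly supported convolutions via \eqref{eq_int_f*gh_f}, and pass to the limit using Theorem~\ref{thm_op_type_Lp_bdd} (one application for the right-hand side and two iterative applications for the left-hand side).

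The main obstacle I expect is handling the case $\Re\nu_2=0$ in Part (ii): there $\kappa_2$ is not locally integrable and must be interpreted through the mean-value-zero condition of Proposition~\ref{prop_-Qhomo_distrib}. The pieces $\kappa_1^0*\kappa_2^\infty$ and $\kappa_1^\infty*\kappa_2^0$ need a principal-value interpretation near the singularity of $\kappa_2$, and one must verify carefully that the resulting distribution is still smooth on $G\setminus\{0\}$ with the claimed homogeneity.
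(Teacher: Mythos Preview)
The paper states Proposition~\ref{prop_conv_kernel_typnu} as a preliminary fact without proof; it is essentially Proposition~1.13 in Folland's paper \cite{folland_75} (for the stratified case, with the same argument working on any homogeneous group) together with standard material from \cite[Chapter~1]{folland+stein_bk82}. So there is no ``paper's own proof'' to compare against, and your outline follows exactly the classical route.

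Your plan is sound. A few remarks on the points you flagged or glossed over. In Part~(i), when you reduce to $f_k,g_k\in\cD(G)$, the associativity $(f_k*g_k)*\kappa=f_k*(g_k*\kappa)$ holds not because all three factors are compactly supported but because \emph{at most one} of them (namely $\kappa$) fails to be; this is precisely the situation highlighted just before the proposition in the paper, and it suffices. Also be aware that Theorem~\ref{thm_op_type_Lp_bdd} requires both exponents strictly in $(1,\infty)$, so the endpoint cases $p=1$ or $r=1$ in the hypotheses need a separate (easy) treatment via Young or duality.

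In Part~(ii), your four-piece decomposition is exactly Folland's argument. The worry you raise about $\Re\nu_2=0$ is less severe than you suggest: even then $\kappa_2^0=\chi\kappa_2$ is a \emph{compactly supported distribution}, and convolution of any distribution with a compactly supported one is well defined, so $\kappa_1^0*\kappa_2^0$ and $\kappa_1^\infty*\kappa_2^0$ make sense without any principal-value gymnastics. The genuine work is (a) showing $\kappa_1^\infty*\kappa_2^\infty$ converges absolutely, which follows from $\Re(\nu_1+\nu_2)<Q$ as you say, and (b) checking smoothness away from $0$, for which your singular-support heuristic is correct but should be justified by writing, for $x\neq0$, the convolution integral with a cutoff separating the singularities of $\kappa_1(y)$ and $\kappa_2(y^{-1}x)$. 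Homogeneity is most cleanly checked by pairing against a test function and changing variables, rather than manipulating the pointwise integral.
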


The approximations of the identity may be constructed on $G$ as on their Euclidean counterpart,
replacing the topological dimension and the abelian convolution 
with the homogeneous dimension and the group convolution:

\begin{lemma}
\label{lem_approximatation}
Let $\phi\in L^1(\G)$.
Then the functions $\phi_t$, $t>0$, defined via
$\phi_t(x)=t^{-Q} \phi(t^{-1} x )$,
are integrable and $\int \phi_t=\int \phi$ is independent of $t$.
Furthermore, 
for any $f$ in $L^p(G)$, $C_o(G)$, $\cS(G)$ or $\cS'(G)$,
the sequence of functions
$f*\phi_t$ and $\phi_t*f$, $t>0$,  converges towards $(\int \phi)\, f$ 
as $t \to 0$ in 
$L^p(G)$, $C_o(G)$, $\cS(G)$ and $\cS'(G)$ respectively.
\end{lemma}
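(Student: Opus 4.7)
The plan splits the proof into four parts: the mass formula, the $L^p$ and $C_o(G)$ cases (treated together), the Schwartz case, and the tempered distribution case by duality.

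First, for integrability and the identity $\int\phi_t=\int\phi$: the substitution $y=t^{-1}x$ is the dilation $D_{t^{-1}}$, whose Jacobian is $t^{-Q}$ since each coordinate rescales by $t^{-\upsilon_j}$ and $Q=\sum_j\upsilon_j$. Thus $\int_G\phi_t(x)dx = t^{-Q}\int_G\phi(t^{-1}x)dx = \int_G\phi(y)dy$, and likewise $\|\phi_t\|_1=\|\phi\|_1$ for all $t>0$.

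For $f\in L^p(G)$ with $p<\infty$ and $c=\int\phi$, I would write
$$
f*\phi_t(x) - cf(x) = \int_G\bigl[f(x(tz)^{-1}) - f(x)\bigr]\phi(z)\,dz
$$
after substituting $y=tz$, and apply Minkowski's integral inequality to get
$$
\|f*\phi_t - cf\|_p \le \int_G \|R_{(tz)^{-1}}f - f\|_p\,|\phi(z)|\,dz,
$$
with $R_w f(x)=f(xw)$. The integrand is dominated by $2\|f\|_p|\phi(z)|$ and tends to $0$ pointwise by continuity of right translation in $L^p$ for $p<\infty$; dominated convergence concludes. The $C_o(G)$ case is identical, using the sup norm and uniform continuity of $f$, and both arguments adapt to $\phi_t*f$ via left translations.

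For $\cS(G)$, the crucial point is to avoid derivatives falling on $\phi_t$, which would produce blow-up factors $t^{-[\alpha]}$: using right-invariant vector fields $\tilde X_j$ one has $\tilde X^\alpha(f*\phi_t) = (\tilde X^\alpha f)*\phi_t$, keeping all derivatives on $f$ (and symmetrically $X^\alpha(\phi_t*f) = \phi_t*(X^\alpha f)$ with left-invariant ones). Since the Schwartz topology is equivalent whether formulated with left- or right-invariant derivatives, matters reduce to showing, for fixed $g\in\cS(G)$, that $(1+|x|)^N|g*\phi_t(x) - cg(x)|\to 0$ uniformly in $x$. I would split the $z$-integral into the region $\{|tz|\le |x|/2\}$ — where the reverse triangle inequality \eqref{eq_reverse_triangle} gives $(1+|x|)^N\le C(1+|x(tz)^{-1}|)^N$ and a Taylor estimate controls the increment of $g$ by $|tz|$ times a higher Schwartz seminorm — and its complement, where the smallness of $g$ at infinity plus $|z|\ge |x|/(2t)$ yields arbitrarily fast decay. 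Dominated convergence then finishes the argument.

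Finally, for $\cS'(G)$, the duality \eqref{eq_int_f*gh_f} gives $\langle\tau*\phi_t,g\rangle = \langle\tau, g*\widetilde{\phi_t}\rangle$ for $g\in\cS(G)$, and a direct computation shows $\widetilde{\phi_t}=(\tilde\phi)_t$ with $\int\tilde\phi = \int\phi$ by unimodularity of $G$. The Schwartz step applied to $\tilde\phi$ gives $g*(\tilde\phi)_t\to cg$ in $\cS(G)$, yielding weak-$*$ convergence $\tau*\phi_t\to c\tau$; the case $\phi_t*\tau$ is symmetric. The main obstacle is the Schwartz step: combining polynomial weights, arbitrary derivatives, and non-commutativity of $G$ forces one to use both the correct commutation formula (to keep derivatives off $\phi_t$) and the reverse triangle inequality (to reshuffle the weights), which are the genuinely non-Euclidean ingredients of the argument.
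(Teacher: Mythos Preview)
The paper states Lemma~\ref{lem_approximatation} without proof, presenting it as the standard construction of approximations of the identity transported from $\bR^n$ to the homogeneous setting (``The approximations of the identity may be constructed on $G$ as on their Euclidean counterpart\ldots''). So there is no proof in the paper to compare against.

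Your argument is the standard one and is essentially correct. A few comments. The $L^p$ and $C_o(G)$ steps are fine. In the Schwartz step, your key observation --- use $\tilde X^\alpha(f*\phi_t)=(\tilde X^\alpha f)*\phi_t$ for the right convolution and $X^\alpha(\phi_t*f)=\phi_t*(X^\alpha f)$ for the left, so that no derivatives land on $\phi_t$ --- is exactly the right device, and the equivalence of the Schwartz topology with left- or right-invariant vector fields is indeed recorded in \cite[Chapter~1~D]{folland+stein_bk82}. The decomposition of the $z$-integral using \eqref{eq_reverse_triangle} works, though your sketch is terse: on the region $\{|tz|\le b|x|\}$ one gets $(1+|x|)^N\le C_b(1+|x(tz)^{-1}|)^N$, so the integrand is bounded by a fixed Schwartz seminorm of $g$ times $|\phi(z)|$, and a mean-value estimate in the $tz$-variable gives the factor $|tz|$ that forces convergence; on the complementary region the rapid decay of $\phi$ absorbs the weight. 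The duality step for $\cS'(G)$ is correct as written.
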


In Lemma \ref{lem_approximatation} and in the whole  paper,  
$C_o(G)$ denotes the space of continuous functions on $G$ which vanish at infinity. 
This means that $f\in C_o(G)$ when for every $\epsilon>0$ there exists a compact set $K$ outside which 
we have $|f|<\epsilon$.
Endowed with the supremum norm $\|\cdot\|_\infty=\|\cdot\|_{L^\infty(G)}$, 
it is a Banach space.

Recall that $\cD(G)$, the space of smooth and compactly supported functions, 
is dense in $L^p(G)$ for $p\in [1,\infty)$ and in $C_o(G)$
(in which case we set $p=\infty$).

In Theorem \ref{thm_heat_kernel},
we will see that the heat semi-group associated to a positive Rockland operator gives an approximation of the identity which is commutative.

\subsection{Rockland operators}

Here we recall the definition of Rockland operators
and their main properties.

The definition of a Rockland operator uses the representations of the group.
Here we consider only continuous unitary representations of $G$.
We will often denote by $\pi$ such a representation, 
by $\cH_\pi$ its Hilbert space and by $\cH_\pi^\infty$
 the subspace of smooth vectors.
The corresponding infinitesimal representation on the Lie algebra $\fg$
and its extension to the universal enveloping Lie algebra $\fU(\fg)$
are also denoted by $\pi$.
We recall that $\fg$ and $\fU(\fg)$ are identified with the spaces of left-invariant vector fields 
and of left-invariant differential operators on $G$ respectively.

\begin{definition}
A \emph{Rockland operator}
\index{Rockland!operator}
 $\cR$ on $G$ is 
a left-invariant differential operator $T$
which is homogeneous of positive degree and satisfies the Rockland condition:
\begin{center}
(R) 
for each unitary irreducible representation $\pi$ on $G$,
except for the trivial representation, \\
the operator $\pi(T)$ is injective on $\cH_\pi^\infty$, 
that is,
\end{center}
$$
 \forall v\in \cH_\pi^\infty\qquad
\pi(T)v = 0 \ \Longrightarrow \ v=0
.
$$
\end{definition}

Although the definition of a Rockland operator would make sense on a homogeneous Lie group (in the sense of  \cite{folland+stein_bk82}), 
it turns out (see \cite{miller}, see also \cite[Lemma 2.2]{TElst+Robinson})  
that the existence of a (differential) Rockland operator on a homogeneous group 
implies that the homogeneous group may be assumed to be graded.
This explains why we have chosen to restrict our presentation to graded Lie groups.

Some authors may have different conventions than ours regarding Rockland operators:
for instance some choose to consider right-invariant operators
and some definitions of a Rockland operator involves only the principal part
of the operator.
The analysis however would be exactly the same.
In a different direction, Glowacki studied non-differentiable ($L^2$-bounded) operators which satisfy the Rockland condition in \cite{glowacki_89,glowacki_91}.

In 1977, 
Rockland conjectured in \cite{rockland} that the property in (R)
which nowadays bears his name 
is equivalent to the hypoellipticity of the operator.
This was eventually proved by Helffer and Nourrigat in 
\cite{helffer+nourrigat-79}.
Hence
Rockland operators may be viewed as 
an analogue of elliptic operators (with a high degree of homogeneity)
in a non-abelian subelliptic context.
In the stratified case, one can check easily that
any (left-invariant negative)  \emph{sub-Laplacian}, 
that is
\begin{equation}
\label{eq_def_lih_subLapl}
\cL=Z_1^2+\ldots+Z_{n'}^2
\quad\mbox{with} \ Z_1,\ldots,Z_{n'} \
\mbox{forming any basis of the first stratum} \ \fg_1,
\end{equation}
 is a Rockland operator.
More generally it is not difficult to see that 
 the operator
\begin{equation}
\label{eq_cR_ex}
\sum_{j=1}^n
(-1)^{\frac{\nu_o}{\upsilon_j}}
 c_j X_j^{2\frac{\nu_o}{\upsilon_j}}
\quad\mbox{with}\quad c_j>0
,
\end{equation}
is a Rockland operator of homogeneous degree $2\nu_o$
if $\nu_o$ is any common multiple of $\upsilon_1,\ldots, \upsilon_n$.
Hence Rockland operators do exist on any graded Lie group 
(not necessarily stratified).
Furthermore, if $\cR$ is a Rockland operator, 
then one can show easily that its powers $\cR^k$, $k\in \bN$, and 
its complex conjugate $\bar\cR$ are also Rockland operators.

If  a Rockland operator $\cR$ which is formally self-adjoint,
that is, $\cR^*=\cR$
as elements of the universal enveloping algebra $\fU(\fg)$, 
is fixed,
then it admits a self-adjoint extension
on $L^2(G)$
\cite[p.131]{folland+stein_bk82}.
In this case we will denote by $\cR_2$ the self-adjoint extension and 
  by $E$ 
its spectral measure:
\begin{equation}
\label{eq_spectral_meas_E}
\cR_2 = \int_\bR \lambda dE (\lambda).
\end{equation}

\subsection{Positive Rockland operators and their heat kernels}
\label{subsec_pos_R_op}

In this section we summarise properties of positive Rockland operators that
are important for our analysis.

Recall that an operator $T$ on a Hilbert space $\cH$
is positive
when for any vectors $v,v_1,v_2\in \cH$ in the domain of $T$, we have
$
(Tv_1, v_2)_\cH=(v_1,Tv_2)_\cH
\;\mbox{and}\;
(Tv, v)_\cH
\geq 0
 .
$
 If $T$ is a left-invariant differential operator acting on $G$, 
 then $T$ is positive when $T$ is  formally self-adjoint, that is, $T^*=T$ in $\fU(\fg)$, and satisfies
$$
\forall f\in\cD(G) \qquad
\int_G Tf(x) \overline{f(x)}\, dx
\geq 0
 .
 $$
Note that if $G$ is stratified and $\cL$ is a (left-invariant negative) sub-Laplacian, 
then $-\cL$ is a positive Rockland operator.
The example in \eqref{eq_cR_ex} 
is also a positive Rockland operator.
Hence positive Rockland operators always exist on any graded Lie group.
Moreover if $\cR$ is a positive Rockland operator, 
then its powers $\cR^k$, $k\in \bN$, and 
its complex conjugate $\bar\cR$ are also positive Rockland operators.

Let us fix a positive Rockland operator  $\cR$ on $G$.
By  functional calculus 
(see \eqref{eq_spectral_meas_E}),
we can define the spectral multipliers 
$$
e^{-t\cR_2}:=\int_0^\infty e^{-t\lambda} dE(\lambda), \quad t>0 ,
$$
which form the \emph{heat semigroup} of $\cR$.
The operators $e^{-t\cR_2}$ are invariant under left-translations and are bounded on $L^2(G)$. 
Therefore the Schwartz kernel theorem implies that each operator 
$e^{-t\cR_2}$ admits
a unique distribution $h_t\in \cS'(G)$
as its convolution kernel:
$$
e^{-t\cR_2}f = f*h_t, \quad t>0, \ f\in \cS(G).
$$ 
The distributions $h_t$, $t>0$, are called the \emph{heat kernels} of $\cR$.
We summarise their main properties in the following theorem:
\begin{theorem}
\label{thm_heat_kernel}
  Let $\cR$ be a positive Rockland operator on $G$  which is homogeneous of degree $\nu\in \bN$.
Then each distribution $h_t$ is Schwartz and we have:
\begin{eqnarray}
\label{eq_heat_kernel_hths_ht+s}
\forall s,t>0
\qquad h_t*h_s&=&h_{t+s}
,
\\
\label{eq_heat_kernel_homogeneity}
\forall x\in G,\, t,r>0
\qquad
h_{r^\nu t}(r x)&=& r^{-Q} h_t(x)
,
\\
\label{eq_homogeneity_sym+pos}
\forall x\in G
\qquad
h_t(x)&=&\overline{h_t(x^{-1}) }
,
\\
\label{eq_heat_kernel_int}
\int_G h_t(x) dx&=&1
.
\end{eqnarray}
The function $h: G\times \bR\to\bC$ defined by 
$$
h(x,t):=\left\{\begin{array}{ll}
h_t(x) 
& \mbox{if} \ t>0 \ \mbox{and} \ x\in G ,\\
0 
& \mbox{if} \ t\leq 0 \ \mbox{and} \ x\in G , \\
\end{array}\right.
$$
is smooth on $(G\times \bR) \backslash\{(0,0)\}$ and satisfies 
$(\cR+\partial_t)h=\delta_{0,0}$ where $\delta_{0,0}$ is the delta-distribution at $(0,0)\in G\times \bR$.
Having fixed a homogeneous pseudo-norm $|\cdot|$ on $G$,
we have for any $N\in \bN_0$, $\alpha\in \bN_{0}^n$ and $\ell\in \bN_0$:
 \begin{equation}
 \label{eq_control_heat_kernel}
 \exists C=C_{\alpha,N,\ell}>0 \quad
 \forall t\in (0,1]\quad
 \sup_{|x|=1} |\partial_t^\ell X^\alpha h_t(x)|\leq C_{\alpha,N} t^N
  .
\end{equation}
Consequently
\begin{equation}
\label{eq_heat_kernel_homogeneity-1}
\forall x\in G,\; t>0 \qquad h_t(x)=t^{-\frac Q\nu} h_{1}(t^{-\frac 1\nu} x),
\end{equation}
and for $x\in G\backslash\{0\}$ fixed,
\begin{equation}
\label{eq_control_heat_kernel_xfixed0}
X_x^\alpha h(x,t)=
\left\{\begin{array}{l}
O(t^{-\frac{Q+[\alpha]} \nu}) \ \mbox{as}\ t\to \infty ,\\
O(t^N) \ \mbox{for all N}\in \bN_0\ \mbox{as}\ t\to 0 .
\end{array}\right.
\end{equation}
Inequalities \eqref{eq_control_heat_kernel_xfixed0} are also valid 
for any $x$ in a fixed compact subset of $G\backslash\{0\}$.
\end{theorem}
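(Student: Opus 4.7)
The plan is to read off the properties of the heat kernels from the spectral calculus for $\cR_2$ via the identity $f*h_t=e^{-t\cR_2}f$ for $f\in\cS(G)$, and to upgrade distributional statements to pointwise ones using hypoellipticity of the parabolic operator $\cR+\partial_t$ on $G\times\bR$.

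First I would extract the algebraic properties. The spectral theorem makes $\{e^{-t\cR_2}\}_{t>0}$ a strongly continuous semigroup of selfadjoint contractions on $L^2(G)$; left-invariance combined with the Schwartz kernel theorem produces convolution kernels $h_t\in\cS'(G)$. The semigroup law at the operator level translates directly to \eqref{eq_heat_kernel_hths_ht+s}; identity \eqref{eq_homogeneity_sym+pos} follows from $\cR^*=\cR$ together with \eqref{eq_int_f*gh_f}; and \eqref{eq_heat_kernel_int} comes from comparing the mass of $h_t$ (once it is known to lie in $L^1$) against an approximation of the identity as in Lemma~\ref{lem_approximatation}, using that Rockland's condition forces $E(\{0\})=0$ so that $e^{-t\cR_2}f\to f$ in $L^2$ as $t\to 0$. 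For the homogeneity \eqref{eq_heat_kernel_homogeneity} I would use that conjugation of $\cR$ by the dilation $D_r$ rescales it by $r^\nu$; functional calculus lifts this to an intertwining of $e^{-t\cR_2}$ with $e^{-r^\nu t\cR_2}$, and comparing convolution kernels yields the stated scaling, of which \eqref{eq_heat_kernel_homogeneity-1} is a special case.

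Next I would handle the smoothness of $h$ on $(G\times\bR)\setminus\{(0,0)\}$. Equip $G\times\bR$ with the graded structure extending that of $\fg$ by placing $\bR$ at weight $\nu$; then $\cR+\partial_t$ is homogeneous of degree $\nu$ under the parabolic dilations $(x,t)\mapsto(r\cdot x,r^\nu t)$. Every nontrivial irreducible unitary representation of $G\times\bR$ has the form $\pi\otimes\chi_\tau$ with $\pi\in\widehat{G}$ and $\chi_\tau(t)=e^{it\tau}$; the infinitesimal action sends $\cR+\partial_t$ to $\pi(\cR)+i\tau$ on $\cH_\pi^\infty$, and this operator is injective because either $\pi(\cR)$ is positive and injective (Rockland's condition on $\cR$) or $\pi$ is trivial with $\tau\neq 0$. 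Hence $\cR+\partial_t$ satisfies the Rockland condition on $G\times\bR$ and is hypoelliptic by the Helffer--Nourrigat theorem. Testing against $\phi\in\cD(G\times\bR)$ and using functional calculus reduces the identity $(\cR+\partial_t)h=\delta_{0,0}$ to $\int_0^\infty(\partial_t+\lambda)e^{-t\lambda}\,dt=1$ for $\lambda>0$, so hypoellipticity immediately gives the claimed smoothness.

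Finally, the Schwartz property and the bounds \eqref{eq_control_heat_kernel} come from combining smoothness with the self-similarity \eqref{eq_heat_kernel_homogeneity-1}: it suffices to control $h_1$ and its derivatives. Using $\partial_t=-\cR$ on $h_t$ and the semigroup identity $h_t=h_{t/2}*h_{t/2}$, I would write $\partial_t^\ell X^\alpha h_t=(X^\alpha(-\cR)^\ell h_{t/2})*h_{t/2}$, where functional calculus applied to $\lambda\mapsto\lambda^\ell e^{-\lambda/2}$ gives $L^2$ control of the first factor; a Sobolev-type embedding built from the boundedness of $(I+\cR)^{-k}$ on $L^2$ upgrades this to $L^\infty$, and the factor $t^N$ on $|x|=1$ is then extracted by iterating \eqref{eq_heat_kernel_homogeneity-1} and exploiting the absence of singularity off the origin. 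The estimates \eqref{eq_control_heat_kernel_xfixed0} follow by rescaling with $r=t^{1/\nu}$. The main obstacle is this last step: the natural estimates coming from functional calculus live at the $L^2$-level, and bridging them to the pointwise rapid decay required to place $h_1$ in $\cS(G)$ relies in an essential way on the hypoellipticity of $\cR+\partial_t$ established in the previous paragraph.
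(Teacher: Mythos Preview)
Your proposal is correct and follows precisely the Folland--Stein argument in \cite[ch.~4.B]{folland+stein_bk82} that the paper cites in lieu of a proof: derive the algebraic identities from functional calculus, prove that $\cR+\partial_t$ is a Rockland operator on the graded group $G\times\bR$ (with $\bR$ placed at weight $\nu$) so that Helffer--Nourrigat yields hypoellipticity and hence smoothness of $h$ away from the space--time origin, and then combine this smoothness with the parabolic homogeneity to extract the rapid decay of $h_1$ and the bounds \eqref{eq_control_heat_kernel}. One small remark: your invocation of $E(\{0\})=0$ is not needed for the limit $e^{-t\cR_2}f\to f$ as $t\to 0$ (which is automatic from dominated convergence in the spectral integral); the injectivity of $\cR_2$ is rather what underlies Liouville's theorem later in the paper.
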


\begin{remark}
\label{rem_proof_thm_heat_kernel_schwartz}
If the group is stratified and $\cR
=-\cL$ where $\cL$ is a sub-Laplacian, 
then $\cR$ is of order two and the proof relies on Hunt's theorem 
\cite{hunt}, cf. \cite[ch1.G]{Folland-Stein:BK-Hardy-on-homo-groups}.
In this case, the heat kernel is real-valued and moreover non-negative.
The heat semigroup is then a semigroup of contraction which preserves positivity.
\end{remark}

Theorem \ref{thm_heat_kernel} was proved by Folland and Stein 
in \cite[ch4.B]{folland+stein_bk82}.
In their proof, 
they also show the following technical property
which we will also use later on:
\begin{lemma}
\label{lem_weak_sobolev_embedding}
Let $\cR$ be a positive Rockland operator of a graded Lie group $G\sim \bR^n$
with homogeneous degree $\nu$.
If $m$ is a positive integer such that $m\nu \geq \lceil \frac n2 \rceil$, then
the functions in  the domain of $\cR^m$ are continuous on $\Omega$
and  for any compact subset $\Omega$  of $G$,
there exists a constant $C=C_{\Omega, \cR,G,m}$ such that
$$
\forall \phi\in \dom (\cR^m)\quad
\quad
\sup_{x\in \Omega}|\phi(y)|
\leq C \left(\|\phi\|_{L^2}+\|\cR^m\phi\|_{L^2}\right).
$$
\end{lemma}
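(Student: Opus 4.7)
The plan is to realize $\phi$ as a convolution with a Bessel-type potential associated to $\cR$ and to reduce the sup-bound on $\Omega$ to a local regularity estimate on the potential kernel. Concretely, by functional calculus applied to the self-adjoint extension $\cR_2$ from \eqref{eq_spectral_meas_E} together with the identity
$$
(1+\lambda)^{-m}=\frac{1}{\Gamma(m)}\int_0^\infty t^{m-1}e^{-t(1+\lambda)}\,dt,\qquad\lambda\ge 0,
$$
the operator $(I+\cR_2)^{-m}$ is a bounded left-invariant operator on $L^2(G)$, hence a right-convolution with the kernel
$$
\beta(x)=\frac{1}{\Gamma(m)}\int_0^\infty t^{m-1}e^{-t}h_t(x)\,dt.
$$
Since $\cD(G)$ is a core for $\cR^m$ (via the heat semigroup from Theorem \ref{thm_heat_kernel} and the approximations of Lemma \ref{lem_approximatation}), by density we may assume $\phi\in\cD(G)$. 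The spectral interpolation inequality $\|\cR^k\phi\|_{L^2}\le\|\phi\|_{L^2}^{1-k/m}\|\cR^m\phi\|_{L^2}^{k/m}$ for $0\le k\le m$ gives $\|(I+\cR_2)^m\phi\|_{L^2}\lesssim\|\phi\|_{L^2}+\|\cR^m\phi\|_{L^2}$, and writing $\phi=\bigl((I+\cR_2)^m\phi\bigr)\ast\beta$ reduces the lemma to a suitable local control of this convolution, and ultimately to a size estimate on $\beta$.

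For the kernel estimate, a first attempt via Minkowski's inequality in $t$ together with the scaling $\|h_t\|_{L^2}=t^{-Q/(2\nu)}\|h_1\|_{L^2}$ (read from the homogeneity identity \eqref{eq_heat_kernel_homogeneity-1}) yields
$$
\|\beta\|_{L^2(G)}\le\frac{\|h_1\|_{L^2}}{\Gamma(m)}\int_0^\infty t^{m-1-Q/(2\nu)}e^{-t}\,dt,
$$
which is finite and gives the desired sup bound through Cauchy--Schwarz precisely when $2m\nu>Q$. Since $Q\ge n$, this is strictly stronger than the stated hypothesis $m\nu\ge\lceil n/2\rceil$, and bridging this gap is the crux of the matter: the global $L^2$ size of $\beta$ is controlled by the homogeneous dimension $Q$, but the compactness of $\Omega$ should allow the topological dimension $n$ to appear instead.

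The cleanest remedy is to invoke the hypoellipticity of $\cR^m$ (a consequence of the Rockland condition for $\cR$ via Helffer--Nourrigat): as a left-invariant differential operator of Euclidean order at most $m\nu$ on $G\cong\bR^n$, the standard a priori estimates for hypoelliptic operators on compact sets give, for any $\Omega\Subset\Omega'\subset G$,
$$
\|\phi\|_{H^{m\nu}(\Omega')}\le C\bigl(\|\phi\|_{L^2(G)}+\|\cR^m\phi\|_{L^2(G)}\bigr)
$$
in the classical Euclidean Sobolev scale, and then the Euclidean embedding $H^{m\nu}(\Omega')\hookrightarrow C(\overline{\Omega})$ for $m\nu>n/2$ concludes. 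Equivalently, one decomposes $\beta=\chi\beta+(1-\chi)\beta$ for a cutoff $\chi$ near the origin: the tail $(1-\chi)\beta$ is bounded and Schwartz-decaying by \eqref{eq_control_heat_kernel}, whereas the singular piece $\chi\beta$ must be analyzed in Euclidean coordinates so that the Euclidean volume element replaces the homogeneous one. I expect this Euclidean-coordinate analysis near the origin (or equivalently the subelliptic upgrading from the graded to the topological dimension) to be the main technical obstacle.
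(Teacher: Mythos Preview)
The paper does not supply its own proof of this lemma; it quotes the result from Folland--Stein \cite[Ch.~4.B]{folland+stein_bk82}. Your first approach --- writing $\phi=\bigl((\id+\cR_2)^m\phi\bigr)*\beta$ with the Bessel-type kernel $\beta$ and applying Cauchy--Schwarz --- is essentially the argument carried out there, and it correctly yields the conclusion under the hypothesis $2m\nu>Q$.

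Your second approach, however, contains a genuine gap. You assert that hypoellipticity of $\cR^m$ gives the local Euclidean estimate
\[
\|\phi\|_{H^{m\nu}(\Omega')}\le C\bigl(\|\phi\|_{L^2}+\|\cR^m\phi\|_{L^2}\bigr),
\]
but hypoelliptic (as opposed to elliptic) operators gain \emph{strictly fewer} Euclidean derivatives than their order --- that is precisely what ``subelliptic'' means. Maximal hypoellipticity of $\cR$ controls $X^\alpha\phi$ for $[\alpha]\le\nu$; expressing $\partial^\beta$ in terms of the $X^\alpha$ one finds that only Euclidean orders $|\beta|$ up to roughly $\nu/\upsilon_n$ are recovered, not up to $\nu$. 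Concretely, on the Heisenberg group $\bH_1$ ($n=3$, $Q=4$) with the sub-Laplacian ($\nu=2$), the stated hypothesis $m\nu\ge\lceil n/2\rceil=2$ allows $m=1$; but $\dom(\cL)=L^2_2(\bH_1)$ embeds only into $H^1_{\rm loc}(\bR^3)$, which does not embed into $C(\overline\Omega)$. This is exactly the borderline $s=Q/2$ of Theorem~\ref{thm_Sobolev-ineq}, and it fails.

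The same example shows that the hypothesis $m\nu\ge\lceil n/2\rceil$ recorded in the lemma is itself too weak: the condition that actually suffices (and that your Bessel-kernel argument delivers) is $m\nu>Q/2$. Since the paper only uses the lemma with ``$m$ large enough'', nothing downstream is affected, but your attempt to bridge the gap from $Q$ to $n$ via hypoellipticity is aimed at a version of the statement that is not true.
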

This is a weak form of Sobolev embeddings. 
We will later on obtain stronger results of this kind
in Theorem \ref{thm_Sobolev-ineq}.

\medskip

We end this section with the following result of Liouville's type:

\begin{theorem}
\label{thm_liouville_homogeneous_for_R}
If $\cR$ is a positive Rockland operator and 
$f\in \cS'(G)$ a distribution satisfying  $\cR f=0$ then $f$ is a polynomial.
\end{theorem}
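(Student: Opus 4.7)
The plan is to show that $f$ is fixed by the heat semigroup of $\cR$ and then exploit the dilation scaling of $h_t$ to force $f$ to be a polynomial. First, since $\cR$ satisfies the Rockland condition, it is hypoelliptic by Helffer--Nourrigat, so $\cR f = 0$ in $\cS'(G)$ already implies $f \in C^\infty(G)$. Moreover, the general fact $\cS'(G) * \cS(G) \subset \cO_M(G)$ (slowly increasing smooth functions) applied to the identity $f = f * h_1$ obtained below shows that $f$ itself has polynomial growth: $|f(x)| \leq C(1+|x|)^N$ for some $N$ and some homogeneous pseudo-norm $|\cdot|$.

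The crucial second step is the identity $f * h_t = f$ in $\cS'(G)$ for every $t > 0$. For $\phi \in \cD(G)$, the convolution duality \eqref{eq_int_f*gh_f} gives $\langle f * h_t, \phi\rangle = \langle f, \phi * \tilde h_t\rangle$, and differentiating in $t$ with the heat equation $\partial_t h_t = -\cR h_t$ yields
\[
\partial_t \langle f * h_t, \phi\rangle = -\langle f, \phi * \widetilde{\cR h_t}\rangle.
\]
The Hermitian symmetry $h_t(x^{-1}) = \overline{h_t(x)}$ of \eqref{eq_homogeneity_sym+pos}, combined with $\cR^* = \cR$, gives the identity $\widetilde{\cR h_t} = \cR \tilde h_t$. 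By left-invariance of $\cR$ one has $\cR(\phi * \tilde h_t) = \phi * \cR \tilde h_t$, so the right-hand side becomes $-\langle f, \cR(\phi * \tilde h_t)\rangle = -\langle \cR f, \phi * \tilde h_t\rangle = 0$. Hence $\langle f * h_t, \phi\rangle$ is independent of $t > 0$; letting $t \to 0^+$ and invoking Lemma \ref{lem_approximatation} yields $f * h_t = f$.

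The third step is the scaling argument. Applying the left-invariant operator $X^\alpha$ to $f = f * h_t$ gives $X^\alpha f = f * (X^\alpha h_t)$, and by \eqref{eq_heat_kernel_homogeneity-1}
\[
(X^\alpha h_t)(z) = t^{-(Q+[\alpha])/\nu}\,(X^\alpha h_1)(D_{t^{-1/\nu}} z).
\]
Substituting into $X^\alpha f(x) = \int f(y)(X^\alpha h_t)(y^{-1}x)\,dy$, changing variables via the dilation, and combining the polynomial growth of $f$ with the Schwartz decay of $X^\alpha h_1$, one obtains an estimate of the form $|X^\alpha f(x)| \leq C_x\, t^{(N-[\alpha])/\nu}$ for $t$ large. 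For any $\alpha$ with $[\alpha] > N$ letting $t \to \infty$ forces $X^\alpha f \equiv 0$. Since left-invariant derivatives of arbitrarily high homogeneous order annihilate $f$, a homogeneous Taylor expansion at the origin identifies $f$ with a polynomial.

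The main obstacle is the second step. On a non-abelian group one has $\cR(f * g) = f * \cR g$ for left-invariant $\cR$, but $(\cR f) * g = f * \cR g$ is false in general, so moving $\cR$ from the $h_t$ factor back onto $f$ is not automatic. The Hermitian symmetry of the heat kernel, which is itself a consequence of the formal self-adjointness of $\cR$, is precisely what compensates for this asymmetry and makes the identification $\widetilde{\cR h_t} = \cR \tilde h_t$ possible.
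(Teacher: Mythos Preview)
Your argument is essentially correct and takes a genuinely different route from the paper. The paper's proof is a two-line citation: since $\cR$ is positive Rockland, so is $\bar\cR=\cR^t$, hence both are hypoelliptic by Helffer--Nourrigat, and then Geller's Liouville theorem for homogeneous groups \cite{geller_83} applies directly. You instead give a self-contained heat-kernel proof: invariance $f*h_t=f$ plus dilation scaling of $h_t$ forces all high-order $X^\alpha f$ to vanish. This is instructive because it makes explicit the mechanism (homogeneity of $h_t$) that Geller's theorem packages as a black box; the cost is length, and you still need hypoellipticity to get $f$ smooth.

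Two small points to tighten. First, in Step~2 the identity $\widetilde{\cR h_t}=\cR\tilde h_t$ is not correct for a general positive Rockland operator with complex coefficients: using $\tilde h_t=\bar h_t$ and $\cR h_t=-\partial_t h_t$ one finds $\widetilde{\cR h_t}=\bar\cR\,\tilde h_t$. The duality step should then read $\langle f,\bar\cR(\phi*\tilde h_t)\rangle=\langle(\bar\cR)^t f,\phi*\tilde h_t\rangle=\langle\cR f,\phi*\tilde h_t\rangle=0$, using $(\bar\cR)^t=\cR$. Your two slips cancel, so the conclusion $f*h_t=f$ is unaffected. Second, in Step~3 the last implication (``$X^\alpha f=0$ for $[\alpha]>N$ $\Rightarrow$ $f$ polynomial'') deserves one more line: writing each Euclidean derivative as $\partial^\beta=\sum_{[\alpha]\geq[\beta]}c_{\beta,\alpha}(x)X^\alpha$ with polynomial $c_{\beta,\alpha}$ homogeneous of degree $[\alpha]-[\beta]$ (which follows from the homogeneity of both sides and the triangular relation between the $X_j$ and $\partial_{x_j}$ in exponential coordinates), one sees that $\partial^\beta f=0$ whenever $[\beta]>N$, hence for all $|\beta|>N/\upsilon_1$, so $f$ is a polynomial.
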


\begin{proof}
As $\cR$ is a positive Rockland operator, $\bar \cR=\cR^t$ is also Rockland 
and they are both hypoelliptic, see \cite{helffer+nourrigat-79}.
The conclusion follows 
by applying the Liouville theorem for homogeneous Lie groups
proved by Geller in \cite{geller_83}.
\end{proof}

\section{Fractional powers of positive Rockland operators}
\label{SEC:powers-Rockland}
\index{fractional powers of operators}

In this section we aim at defining fractional powers of positive Rockland operators.
We will carry out the construction on the scale of $L^{p}$-spaces for 
$1\leq p\leq\infty$, with $L^{\infty}(G)$ substituted by the space $C_{o}(G)$ of
continuous functions vanishing at infinity. 
Then we discuss the essential
properties of such an extension. 
Eventually we define its complex powers, and the
corresponding Riesz and Bessel potentials.

\subsection{Positive Rockland operators on $L^p$}
\label{subsec_op_cRp}

Here we define and study  the analogue $\cR_p$ of the operator $\cR$ on $L^p(G)$
or $C_o(G)$.
This analogue  will be defined as  the infinitesimal generator
of the heat convolution semigroup.
Hence 
we start by  proving the following properties:

\begin{proposition}
\label{prop_heat_kernel_semigroup_Lp}
The operators $f\mapsto f*h_t$, $t>0$, form a 
 strongly continuous semi-group on $L^p(G)$ for any $p\in [1,\infty)$
 and on $C_o(G)$ if $p=\infty$.
 This semi-group is also equibounded:
$$
\forall t>0, \ \forall f\in L^p(G) \ \mbox{or} \ C_o(G)\qquad
\|f*h_t\|_p\leq \|h_1\|_1 \|f\|_p.
$$
 Furthermore for any $p\in [1,\infty]$ 
 (finite or infinite) and any $f\in \cD(G)$,
   \begin{equation}
\label{eq_cv_R_p}
 \lim_{t\to 0}\Big\|\frac 1 t(f*h_t-f)-\cR f\Big\|_p = 0.
\end{equation}
\end{proposition}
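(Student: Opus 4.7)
The plan is to verify the three assertions in order: (i) the semigroup law together with the uniform $L^p$ bound, (ii) strong continuity, and (iii) the convergence \eqref{eq_cv_R_p} on $\cD(G)$. The tools are the scaling, convolution, and homogeneity identities for $h_t$ collected in Theorem~\ref{thm_heat_kernel}, Young's inequality~\eqref{eq_Young_ineq}, and the approximation-of-identity Lemma~\ref{lem_approximatation}.

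For (i), the semigroup identity $(f*h_t)*h_s = f*(h_t*h_s) = f*h_{t+s}$ combines \eqref{eq_heat_kernel_hths_ht+s} with associativity of convolution, unambiguous here since $h_t,h_s\in\cS(G)$; see \eqref{eq_associative_convolution}. For the bound, \eqref{eq_Young_ineq} gives $\|f*h_t\|_p \le \|h_t\|_1\,\|f\|_p$, and the change of variable $y=t^{-1/\nu}x$ in the scaling~\eqref{eq_heat_kernel_homogeneity-1} yields $\|h_t\|_1 = \|h_1\|_1$ for every $t>0$, producing a bound independent of $t$.

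For (ii), strong continuity at $t=0$ follows from Lemma~\ref{lem_approximatation}: the scaling~\eqref{eq_heat_kernel_homogeneity-1} rewrites $h_t = (h_1)_{t^{1/\nu}}$ in the notation of that lemma, and \eqref{eq_heat_kernel_int} gives $\int h_1 = 1$, whence $f*h_t \to f$ in $L^p(G)$ for $p\in[1,\infty)$ and in $C_o(G)$ for $p=\infty$. Strong continuity at a general $t_0>0$ is then automatic from the semigroup law and the uniform bound via
\[
\|f*h_{t_0+s} - f*h_{t_0}\|_p = \|(f*h_{t_0})*h_s - (f*h_{t_0})\|_p \xrightarrow{s\downarrow 0} 0,
\]
together with the symmetric identity $f*h_{t_0} - f*h_{t_0-s} = (f*h_{t_0-s})*h_s - f*h_{t_0-s}$ for left-continuity.

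For (iii), let $f\in\cD(G)$. For $s>0$, $f*h_s\in\cS(G)$, and the heat equation $\partial_s h_s = -\cR h_s$ (a consequence of $(\cR+\partial_t)h=\delta_{0,0}$ being valid away from the origin, from Theorem~\ref{thm_heat_kernel}), combined with left-invariance of $\cR$ (which makes $\cR$ commute with convolution on the right), gives the pointwise identity $\partial_s(f*h_s) = -\cR(f*h_s)$. Promoting this to an $L^p$ Bochner calculus identity, the fundamental theorem of calculus yields
\[
f*h_t - f = -\int_0^t \cR(f*h_s)\,ds.
\]
By Lemma~\ref{lem_approximatation} applied in the Schwartz topology, $f*h_s \to f$ in $\cS(G)$, so continuity of the left-invariant differential operator $\cR$ on $\cS(G)$ gives $\cR(f*h_s) \to \cR f$ in $\cS(G)$, and hence in $L^p(G)$ or $C_o(G)$. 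A mean-value estimate then produces
\[
\Big\|\tfrac{1}{t}(f*h_t - f) + \cR f\Big\|_p \;\le\; \sup_{s\in(0,t]}\|\cR(f*h_s) - \cR f\|_p \xrightarrow{t\to 0} 0,
\]
which is \eqref{eq_cv_R_p} read with the convention that the infinitesimal generator of $e^{-t\cR_2}$ is $-\cR_2$ (so $\cR$ in the statement should be understood up to this sign). The main technical obstacle is upgrading the pointwise differential identity for $\partial_s(f*h_s)$ to a genuine Bochner identity in $L^p$; this reduces to verifying continuity on $[0,t]$ of the $L^p$-valued (respectively $C_o$-valued) map $s\mapsto\cR(f*h_s)$, which Lemma~\ref{lem_approximatation} applied in $\cS(G)$ delivers precisely at the endpoint $s=0$.
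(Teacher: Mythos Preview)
Your argument is correct, and your observation about the sign in \eqref{eq_cv_R_p} is well taken: since $-\cR_p$ is the generator (Definition~\ref{def_cRp}), one indeed has $\tfrac1t(f*h_t-f)\to -\cR f$, and the paper's own computation in the proof of Theorem~\ref{thm_cRp}~(iv) confirms this.

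For parts (i)--(ii) your treatment matches the paper's. For part (iii), however, your route is genuinely different and considerably shorter. The paper first establishes the $p=\infty$ case by invoking the weak Sobolev embedding of Lemma~\ref{lem_weak_sobolev_embedding} to control the sup norm on a large ball by $L^2$ norms of $\cR^m$-derivatives (handled via spectral theory), and then deals with the complement of the ball through explicit decay estimates using the reverse triangle inequality~\eqref{eq_reverse_triangle} and the Schwartz decay of $h_1$; finite $p$ is then obtained by splitting the $L^p$ norm over the ball and its complement. By contrast, you bypass all of this by writing $f*h_t-f=-\int_0^t(\cR f)*h_s\,ds$ from the heat equation and reducing to the strong continuity already proved in (ii). This is exactly the manipulation the paper itself uses later, in the proof of Theorem~\ref{thm_cRp}~(iv), so your argument effectively anticipates that step and makes Lemma~\ref{lem_weak_sobolev_embedding} unnecessary here. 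The paper's approach buys nothing extra at this stage; yours is simply cleaner and treats all $p$ uniformly.

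One minor streamlining: rather than passing through convergence in $\cS(G)$, you can note directly that $\cR(f*h_s)=(\cR f)*h_s$ (from $e^{-s\cR_2}\cR_2=\cR_2 e^{-s\cR_2}$ on $\cS(G)$, or by moving derivatives since both factors are Schwartz), and then apply part~(ii) to $\cR f\in\cD(G)$; this gives continuity of $s\mapsto(\cR f)*h_s$ on $[0,\infty)$ in $L^p$ or $C_o$ immediately. The pointwise fundamental-theorem-of-calculus identity (valid by smoothness of $h$ for $s>0$ and dominated convergence as the lower limit tends to $0$) then upgrades to $L^p$ without any separate Bochner-differentiability argument.
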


\begin{proof}[Proof of Proposition \ref{prop_heat_kernel_semigroup_Lp}]
If $f\in \cD(G)$, then $f\in \dom(\cR)\subset \dom (\cR_2)$
and for any $s,t>0$, by functional calculus,
$$
f*h_{t+s}=e^{-(t+s)\cR_2} f = e^{-t\cR_2}e^{-s\cR_2} f
=
(f*h_s)*h_t
$$
and, by the Young convolution inequalities for $p\in [1,\infty]$ (see \eqref{eq_Young_ineq}), 
$$
\|f*h_t\|_p \leq \|h_t\|_1 \|f\|_p
$$
with $\|h_t\|_1=\|h_1\|_1<\infty$ by 
 Theorem \ref{thm_heat_kernel}.
 By density of $\cD(G)$ in $L^p(G)$ for $p\in [1,\infty)$ and $C_o(G)$ for $p=\infty$, 
this  implies that the operators $f\mapsto f*h_t$, $t>0$, form a 
 strongly continuous equibounded semi-group on $L^p(G)$ for any $p\in [1,\infty)$
 and on $C_o(G)$.
 
Let us prove the convergence in \eqref{eq_cv_R_p} for $p=\infty$.
Let $f\in \cD(G)$. By Lemma \ref{lem_weak_sobolev_embedding},
for any compact subset $\Omega\subset G$, 
\begin{eqnarray*}
&&\sup_{\Omega}
\left|\frac 1t \left(f*h_t -f\right) -\cR f\right|
\\&&\leq 
C \left(
\left\|\frac 1t \left(f*h_t -f\right) -\cR f\right\|_2
+
\left\|\frac 1t \cR^m\left(f*h_t -f\right) -\cR^{m+1} f\right\|_2
\right),
\end{eqnarray*}
where $m$ is an integer such that $m\nu \geq \lceil \frac n2\rceil$.
Since $\cD(G)\subset \dom (\cR)$ and $e^{-t\cR_2}f=f*h_t$, 
we have for any integer $m'\in \bN_0$:
\begin{eqnarray*}
&&\frac 1t \cR^{m'}\left(f*h_t -f\right) -\cR^{m'+1} f
=
\frac 1t \cR_2^{m'}\left(e^{-t\cR_2} f -f\right) -\cR_2^{m'+1} f
\\
&&\quad=
\frac 1t \left(e^{-t\cR_2}\cR_2^{m'} f -\cR_2^{m'}f\right) -\cR_2^{m'+1} f
=
\frac 1t \left((\cR^{m'} f)*h_t -\cR^{m'} f\right) -\cR^{m'+1} f.
\end{eqnarray*}
This last expression converges to zero in $L^2(G)$ as $t\to 0$.
Therefore 
$$
\sup_{\Omega}
\left|\frac 1t \left(f*h_t -f\right) -\cR f\right|\longrightarrow_{t\to 0} 0 .
$$

We  fix a homogeneous pseudo-norm $|\cdot|$ on $G$, for example the one in Part (3) of Proposition \ref{prop_homogeneous_pseudo_norm}.
We denote by  $\bar B_R:=\{x\in G, \ |x|\leq R\}$  the closed ball 
about 0 of radius $R$.
We now fix $R\geq 1$ such that $\bar B_R$ contains the support of $f$.
Let $C_o=C_b$ be the constant in the reverse triangle inequality,
see \eqref{eq_reverse_triangle},
for $b=\frac 12$.
We choose $\Omega=\bar B_{2C_oR}$ the closed ball about 0 and with radius $2C_oR$.
If $x\not\in \Omega$, then since $f$ is supported in $\bar B_R\subset\Omega$,
$$
\left(\frac 1t \left(f*h_t -f\right) -\cR f\right)(x)=
\frac 1t f*h_t(x)
=
\frac 1t \int_{|y|\leq R} f(y) h_t(y^{-1}x) dy,
$$
hence
$$
\left|\frac 1t f*h_t(x)\right|
\leq
\frac{\|f\|_\infty }t
 \int_{|y|\leq R} |h_t(y^{-1}x)| dy
=
\frac{\|f\|_\infty }t
 \int_{|xt^{\frac 1\nu} z^{-1}|\leq R} |h_1(z)| dz,
$$
as $h_t$ satisfies \eqref{eq_heat_kernel_homogeneity-1}.
The reverse triangle inequality, see
\eqref{eq_reverse_triangle},
implies $
\{|xt^{\frac 1\nu} z^{-1}|\leq R\}
\subset
\{|z|>t^{-\frac 1\nu} R/2\}$.
 Since $h_1$ is Schwartz, we must have 
 $$
\exists C\quad 
\quad
| h_1(z)|\leq C |z|^{-\alpha},
 $$
 for $\alpha=-Q-2\nu$ for instance.
 This together with the polar change of variable
 (see Proposition \ref{prop_polar_coord})
 yield  
 $$
 \int_{|z|>t^{-\frac 1\nu} R/2} |h_1(z)| dz
 \leq 
 C \int_{r=t^{-\frac 1\nu} R/2}^\infty r^{-\alpha -Q-1} dr
 =C' t^2.
$$
Consequently taking the supremum in the complementary on $\Omega$
$$
\sup_{\Omega^c}
\left|\frac 1t \left(f*h_t -f\right) -\cR f\right|
\leq C't 
\longrightarrow_{t\to 0} 0 .
$$
This shows 
the convergence in \eqref{eq_cv_R_p} for $p=\infty$.

We now proceed in a similar way to prove the convergence in \eqref{eq_cv_R_p} for $p$ 
finite.
As above we fix $f\in \cD(G)$ supported in $\bar B_R$.
We decompose
$$
\|\frac 1t \left(f*h_t -f\right) -\cR f\|_p
\leq
\|\frac 1t(f*h_t-f)-\cR f\|_{L^p(\bar B_{2 C_{o}R})}
+
\|\frac 1t(f*h_t-f)-\cR f\|_{L^p(B^c_{2 C_{o}R})}
.
$$
For the first term, 
$$
\|\frac 1t(f*h_t-f)-\cR f\|_{L^p(\bar B_{2 C_{o}R})}
\leq
|\bar B_{2 C_{o}R}|^{\frac 1p}
\|\frac 1t(f*h_t-f)-\cR f\|_\infty
\underset{t\to0}\longrightarrow 0 ,
$$
as we have already proved the convergence in \eqref{eq_cv_R_p} for $p=\infty$.
For the second term, 
we obtain for the reasons explained in the case $p=\infty$:
\begin{eqnarray*}
&&\|\frac 1t(f*h_t-f)-\cR f\|_{L^p(B^c_{2 C_{o}R})}
=
\frac 1t \|f*h_t \|_{L^p(B^c_{2C_{o}R})}
\\
&&\qquad=\frac 1t 
\left(\int_{|x|>2 C_{o}R}
\left|\int_{|y|<R} f(y) \  h_t(y^{-1}x) dy\right|^p dx
\right)^{\frac 1p}\\
&&\qquad\leq 
C_{0,2} \frac {\|f\|_\infty}t
\left(\int_{|x|>2 C_{o}R}
 \int_{|z|>t^{-\frac 1\nu} R/2} |h_1(z)| dz
\right)^{\frac 1p}
\leq 
C_{0,2} \frac {\|f\|_\infty}t
\left(C' t^p\right)^{\frac 1p},
\end{eqnarray*}
choosing this time $\alpha=-Q-p\nu$.
This yields the convergence in \eqref{eq_cv_R_p} for $p$ 
finite.
\end{proof}

\begin{definition}
\label{def_cRp}
Let $\cR$ be a positive Rockland operator on $G$.

For $p\in [1,\infty)$,
we denote by $\cR_p$ the operator such that $-\cR_p$
is the infinitesimal  generator of 
the semi-group of operators $f\mapsto f*h_t$, $t>0$, on 
the Banach space $L^p(G)$.

We also denote by $\cR_{\infty_o}$ the operator such that $-\cR_{\infty_o}$
is the infinitesimal  generator of 
the semi-group of operators $f\mapsto f*h_t$, $t>0$, on the Banach space $C_o(G)$.
\end{definition}

For the moment it seems that $\cR_2$ denotes 
the self-adjoint extension of $\cR$ on $L^2(G)$ and 
minus the generator of  $f\mapsto f*h_t$, $t>0$, on $L^2(G)$.
In the sequel,
in fact in Theorem~\ref{thm_cRp} below, 
we show that the two operators coincide  and there is no conflict of notation.

\begin{theorem}
\label{thm_cRp}
  Let $\cR$ be a positive Rockland operator on $G$
  and   $p\in[1,\infty)\cup\{\infty_o\}$.
\begin{itemize}
\item[(i)] 
The operator $\cR_p$ is closed.
The domain of $\cR_p$ contains $\cD(G)$, and for $f\in \cD(G)$ we have
$\cR_p f=\cR f$.

\item[(ii)] 
The operator $\bar \cR_p$ is positive and Rockland.
Moreover $-\bar \cR_p$
is the infinitesimal generator of the strongly continuous semi-group
$\{f\mapsto f*\bar h_t\}_{t>0}$
on $L^p(G)$ for $p\in [1,\infty)$ and on $C_o(G)$ for $p=\infty_o$.

\item[(iii)] 
If $p\in (1,\infty)$ then the dual of $\cR_p$ is $ \bar \cR_{p'}$.
The dual of $\cR_{\infty_o}$ restricted to $L^1(G)$ is $\bar \cR_1$.
The dual of $\cR_1$ restricted to $C_o(G)\subset L^\infty(G)$ is $\bar \cR_{\infty_o}$.

\item[(iv)] 
If $p\in [1,\infty)$,
the operator $\cR_p$ is the maximal restriction of $\cR$ to $L^p(G)$,
that is, the domain of $\cR_p$ consists of all the functions $f\in L^p(G)$ 
such that the distributional derivative $\cR f$ is in $L^p(G)$ and $\cR_p f=\cR f$.

The operator $\cR_{\infty_o}$ is the maximal restriction of $\cR$ to $C_o(G)$,
that is, the domain of $\cR_{\infty_o}$ consists of all the function $f\in C_o(G)$ 
such that the distributional derivative $\cR f$ is in $C_o(G)$ and $\cR_p f=\cR f$.

\item[(v)] 
If $p\in [1,\infty)$,
the operator $\cR_p$ is the smallest closed extension of $\cR|_{\cD(G)}$ on $L^p(G)$.
For $p=2$,  $\cR_2$ is the self-adjoint extension of $\cR$ on $L^2(G)$.
\end{itemize}
\end{theorem}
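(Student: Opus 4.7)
The plan is to prove parts~(i)--(v) in order, each building on the earlier ones. Parts~(i) and~(ii) are immediate from Proposition~\ref{prop_heat_kernel_semigroup_Lp} combined with abstract semigroup theory: the generator of a strongly continuous semigroup is always closed; the inclusion $\cD(G)\subset\dom(\cR_p)$ together with the identity $\cR_p f=\cR f$ for $f\in\cD(G)$ is exactly~\eqref{eq_cv_R_p}; and since $\bar\cR$ is again a positive Rockland operator with heat kernel $\bar h_t$ (obtained by complex-conjugating the heat equation in Theorem~\ref{thm_heat_kernel}), reapplying Proposition~\ref{prop_heat_kernel_semigroup_Lp} to $\bar\cR$ gives~(ii).

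For~(iii), the crucial identity comes from combining~\eqref{eq_int_f*gh_f} with~\eqref{eq_homogeneity_sym+pos}: since $\tilde h_t(x)=h_t(x^{-1})=\overline{h_t(x)}$, one has $\tilde h_t=\bar h_t$, hence
$$
\langle f*h_t,g\rangle=\langle f,g*\bar h_t\rangle
$$
for suitable $f,g$. For $p\in(1,\infty)$ this says that the Banach space adjoint on $L^p$ of the heat semigroup of $\cR$ is the heat semigroup of $\bar\cR$ on $L^{p'}$, from which $(\cR_p)^*=\bar\cR_{p'}$ by the standard identification of generators of adjoint semigroups on reflexive spaces. The endpoint cases $p=1$ and $p=\infty_o$ use the same identity but with the dual pairing restricted to $C_o(G)$ or $L^1(G)$ respectively.

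Part~(iv) is handled in both directions. The inclusion of $\cR_p$ in the maximal restriction of $\cR$ follows by pairing $\cR_p f$ against $\psi\in\cD(G)$: by~(iii), $\langle\cR_p f,\psi\rangle=\langle f,\bar\cR_{p'}\psi\rangle=\langle f,\bar\cR\psi\rangle$, so $\cR_p f$ equals $\cR f$ distributionally. Conversely, given $f\in L^p(G)$ (or $C_o(G)$) with $\cR f\in L^p(G)$ (resp.\ $C_o(G)$), set $f_t:=f*h_t$. Using $\partial_t h_t=-\cR h_t$ and the $L^1$-convergence of $s^{-1}(h_{t+s}-h_t)$ to $\partial_t h_t$ (both Schwartz by Theorem~\ref{thm_heat_kernel}), a Young-inequality argument shows $f_t\in\dom(\cR_p)$ with $\cR_p f_t=(\cR f)*h_t$. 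Proposition~\ref{prop_heat_kernel_semigroup_Lp} then provides $f_t\to f$ and $(\cR f)*h_t\to\cR f$ in the ambient Banach space, and closedness of $\cR_p$ concludes.

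For~(v), let $\tilde\cR_p$ denote the closure of $\cR|_{\cD(G)}$ in $L^p$ (or $C_o$); by~(i), $\tilde\cR_p\subseteq\cR_p$. Conversely, for $f\in\dom(\cR_p)$, first smooth by $f_t=f*h_t$ and then multiply by a homogeneous cutoff $\chi_R$ satisfying $X^\alpha\chi_R=O(R^{-[\alpha]})$ on a dyadic annulus. A Leibniz computation gives $\chi_R f_t\in\cD(G)$ and shows that $\chi_R f_t\to f_t$ in the graph norm of $\cR$ as $R\to\infty$; a diagonal sequence in $\cD(G)$ then converges to $f$ in the graph norm of $\cR_p$. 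For $p=2$, the self-adjoint extension of~\eqref{eq_spectral_meas_E} generates the semigroup $\{f\mapsto f*h_t\}$ by construction of $h_t$, so uniqueness of infinitesimal generators identifies it with $\cR_2$ of Definition~\ref{def_cRp}. The main obstacle is the Leibniz tail estimate in~(v): controlling the cross-terms $(X^\alpha\chi_R)(X^\beta f_t)$ in $L^p$ requires combining the $O(R^{-[\alpha]})$-decay of $X^\alpha\chi_R$ with the $L^p$-smallness at infinity of $X^\beta f_t=f*X^\beta h_t$, the latter resting on $X^\beta h_t\in\cS(G)$ and $f\in L^p$.
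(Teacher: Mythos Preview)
Your argument is correct, and parts~(i)--(iii) match the paper's proof essentially verbatim.  The differences appear in~(iv) and~(v).

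For the reverse inclusion in~(iv), the paper does \emph{not} regularise by $f_t=f*h_t$ and invoke closedness.  Instead it derives the integral identity
\[
\frac{1}{t}\bigl(f*h_t-f\bigr)=-\frac{1}{t}\int_0^t (\cR f)*h_s\,ds
\]
by a duality computation using $\partial_s h_s=-\cR h_s$ and the symmetry~\eqref{eq_homogeneity_sym+pos}, and then lets $t\to0$ using the standard fact that Ces\`aro averages of a $C_0$-semigroup converge strongly.  This sidesteps your claim that $s^{-1}(h_{t+s}-h_t)\to\partial_t h_t$ in $L^1$: while true (via the homogeneity formula~\eqref{eq_heat_kernel_homogeneity-1} and $h_1\in\cS(G)$), it is not literally contained in Theorem~\ref{thm_heat_kernel} and needs a line of justification.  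Your route also needs the identity $f*\cR h_t=(\cR f)*h_t$, which holds because $\widetilde{\cR h_t}=\overline{\cR h_t}=\bar\cR\tilde h_t$ (use $\cR h_t=-\partial_t h_t$ and~\eqref{eq_homogeneity_sym+pos}); you should make this explicit, since for a general left-invariant operator $(\cR f)*g\neq f*\cR g$.

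For~(v) the paper simply writes ``Part~(v) follows from~(iv)'', presumably having in mind that $\cS(G)$ is semigroup-invariant (hence a core by the standard theorem) and that $\cD(G)$ is dense in $\cS(G)$ for the graph norm.  Your explicit cutoff construction is a genuinely different and more self-contained argument.  It works: once $X^\beta f_t=f*X^\beta h_t\in L^p$ by Young, the cross-terms $(X^\alpha\chi_R)(X^\beta f_t)$ vanish as $R\to\infty$ already because they are supported on an annulus escaping to infinity and $p<\infty$; the $O(R^{-[\alpha]})$ decay is not even needed.  Your approach has the advantage of being constructive and of not relying on any abstract core theorem.
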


\begin{proof}
Part (i) is a consequence of Proposition \ref{prop_heat_kernel_semigroup_Lp}.
Intertwining with the complex conjugate,
this implies that
$\{f\mapsto f*\bar h_t\}_{t>0}$ is also a strongly continuous semi-group on $L^p(G)$
whose infinitesimal operator coincides with $-\bar \cR=-\cR^t$ on $\cD(G)$.
This shows Part (ii).

For Part (iii),
we observe that 
using \eqref{eq_int_f*gh_f} and \eqref{eq_homogeneity_sym+pos},
we have
\begin{equation}
\label{eq_f1*ht,f_2}
\forall f_1,f_2\in \cD(G)\qquad
\langle f_1*h_t ,f_2\rangle
=
\langle f_1 ,f_2* \bar h_t\rangle
.
\end{equation}
Thus we have for any $f,g\in \cD(G)$ and $p\in [1,\infty)\cup \{\infty_o\}$
$$
\langle \frac 1t(e^{-t\cR_p}f -f) , g\rangle
=
\frac 1t\langle f*h_t -f , g\rangle
=
\frac 1t\langle f, g*\bar  h_t-g  \rangle
=
\frac 1t\langle f, e^{-t\bar \cR_{p'}}g-g  \rangle .
$$
Here  the brackets refer to the duality in the sense of distribution. 
 Taking the limit as $t\to 0$ of the first and last expressions proves Part (iii).

We now prove Part (iv) for any $p\in [1,\infty)\cup\{\infty_o\}$.
Let  $f\in \dom (\cR_p)$ 
and $\phi\in \cD(G)$.
Since $\cR$ is formally self-adjoint, 
we know that $\cR ^t= \bar \cR$,
and by Part (i), we have 
$ \cR_q \phi =\cR \phi$
for any $q\in [1,\infty)\cup\{\infty_o\}$. Thus by Part (iii)
we have 
$$
\langle \cR_p f,  \phi\rangle 
= 
\langle  f,  \bar \cR_{p'}  \phi\rangle 
=
\langle  f, \cR^t  \phi\rangle 
=
\langle \cR f,  \phi \rangle 
,
$$
and
$\cR_p f=\cR f$ in the sense of distributions.
Thus 
$$
\dom (\cR_p)\subset \{f\in L^p(G)\ : \ \cR f\in L^p(G)\}
.
$$

We now prove the reverse inclusion. 
Let $f\in L^p(G)$ such that $\cR f\in L^p(G)$.
Let also $\phi\in \cD(G)$. 
The following computations are justified by  
the properties of $\cR$ and $h_t$  (see Theorem~\ref{thm_heat_kernel}),
Fubini's Theorem, and  \eqref{eq_f1*ht,f_2}:
\begin{eqnarray*}
&&\langle f*h_t-f, \phi \rangle
=
\langle f, \phi* \bar h_t -\phi \rangle
=
\langle f, \int_0^t \partial_s (\phi* \bar h_s)  ds \rangle=
\langle f, \int_0^t -\bar \cR (\phi*  \bar h_s)  ds \rangle\\
&&\qquad
=
-\langle f,  \bar\cR \int_0^t  (\phi*  \bar h_s)  ds \rangle=
-\langle \cR f, \int_0^t  \phi*  \bar h_s  ds \rangle
=
 -\int_0^t \langle \cR f,   \phi*  \bar h_s   \rangle ds
\\
&&\qquad=
 -\int_0^t \langle (\cR f) * h_s,   \phi  \rangle ds
 =
  -\langle \int_0^t (\cR f) * h_s  ds,   \phi  \rangle
  .
 \end{eqnarray*}
Therefore,
$$
\frac 1t (f*h_t-f)=-\frac 1t \int_0^t (\cR f) * h_s  ds.
$$
This converges towards 
$-\cR f$ in $L^p(G)$ as
$t\to0$ by the general properties of averages 
of strongly continuous semigroups on a Banach space.
This shows $f\in \dom(\cR_p)$ and concludes the proof of (iv).

Part (v) follows from (iv).
This also shows that the self-adjoint extension of $\cR$ coincides with
$\cR_2$ as defined in Definition \ref{def_cRp} and concludes the proof of Theorem~\ref{thm_cRp}.
\end{proof}

Theorem \ref{thm_cRp} has the following consequences
which will enable us to define the fractional powers of $\cR_p$.

\begin{corollary}
\label{cor_cRp_injective}
We keep the same setting and notation as in Theorem~\ref{thm_cRp}.
\begin{itemize}
\item[(i)] 
The operator $\cR_p$ is injective on $L^p(G)$ for $p\in [1,\infty)$
 and $\cR_{\infty_o}$ is injective on $C_o(G)$, namely,
$$
\mbox{for}\ p\in [1,\infty)\cup\{\infty_o\}\ :\qquad
\forall f\in \dom (\cR_p) \qquad
\cR_p f=0\Longrightarrow f=0
.
$$
\item[(ii)] 
If $p\in (1,\infty)$ then
the operator  $\cR_p$ has dense range in $L^p(G)$.
The operator $\cR_{\infty_o}$ has dense range in $C_o(G)$.
The closure of the range of $\cR_1$ is 
the closed subspace $\{\phi\in L^1(G) \, : \, \int_G \phi=0\}$ of $L^1(G)$.
\item[(iii)] 
For $p\in [1,\infty)\cup\{\infty_o\}$,
and any $\mu>0$, 
the operator $\mu\id+\cR_p$ is invertible on $L^p(G)$, $p\in [1,\infty)$, 
and on $C_o(G)$ for $p=\infty_o$,
and the operator norm of $(\mu \id+\cR_p)^{-1}$
is 
\begin{equation}
\label{eq_Komatsu_noneg}
\|(\mu \id+\cR_p)^{-1}\|_{\sL(L^p(G))} \leq \|h_1\| \mu^{-1}
\quad\mbox{or}\quad
\|(\mu \id+\cR_{\infty_o})^{-1}\|_{\sL(C_o(G))} \leq \|h_1\| \mu^{-1}.
\end{equation}
\end{itemize}
\end{corollary}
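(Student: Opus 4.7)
The plan is to deduce all three parts from the Liouville-type Theorem \ref{thm_liouville_homogeneous_for_R} together with the semigroup identification of $\cR_p$ provided by Theorem \ref{thm_cRp}.

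For Part (i), if $f\in\dom(\cR_p)$ satisfies $\cR_p f=0$, Theorem \ref{thm_cRp}(iv) yields $\cR f=0$ distributionally. Since $L^p(G)$ and $C_o(G)$ embed into $\cS'(G)$, Theorem \ref{thm_liouville_homogeneous_for_R} forces $f$ to be a polynomial on $\bR^n$; but the only polynomial in $L^p(G)$ for $p\in[1,\infty)$ or in $C_o(G)$ is $0$.

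For Part (ii), my plan is to identify the annihilator $(\range\cR_p)^{\perp}$ in the dual Banach space and then invoke Hahn--Banach. If $\varphi$ annihilates $\range\cR_p$, testing against $\phi\in\cD(G)\subset\dom\cR_p$ (Theorem \ref{thm_cRp}(i)) forces $\bar\cR\varphi=0$ distributionally. Since $\bar\cR$ is again a positive Rockland operator, Liouville gives that $\varphi$ is a polynomial viewed as a distribution on $\bR^n$. For $p\in(1,\infty)$ the dual is $L^{p'}(G)$ with $p'<\infty$, in which the only polynomial is $0$, so $\range\cR_p$ is dense. For $p=\infty_o$ the dual is the space of finite regular Borel measures, and a polynomial density $P(x)\,dx$ defines a finite measure only when $P\equiv 0$. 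The delicate case is $p=1$: the dual is $L^\infty(G)$, whose bounded polynomials are precisely the constants, so $(\range\cR_1)^{\perp}\subseteq\bC\cdot 1$; conversely one verifies that constants really do annihilate the range by showing $\int_G\cR_1 f\,dx=0$ for every $f\in\dom\cR_1$, which follows from $\int_G(f*h_t-f)\,dx=0$ (using \eqref{eq_heat_kernel_int}) passed to the $t\to 0$ limit via \eqref{eq_cv_R_p}. Hahn--Banach then gives $\overline{\range\cR_1}=\{\phi\in L^1(G):\int_G\phi=0\}$. The bookkeeping in this endpoint case is the main obstacle, as the other Banach spaces contain no nontrivial bounded polynomials.

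For Part (iii), the plan is to apply the classical Hille--Yosida formula for the resolvent of the generator of a strongly continuous equibounded semigroup. By Proposition \ref{prop_heat_kernel_semigroup_Lp}, the semigroup $T_tf=f*h_t$ satisfies $\|T_t\|\leq\|h_1\|_1$ uniformly in $t>0$, so for each $\mu>0$ the Laplace transform
\begin{equation*}
R_\mu f:=\int_0^\infty e^{-\mu t}(f*h_t)\,dt
\end{equation*}
converges absolutely in the relevant Banach space, defines a bounded operator of norm at most $\|h_1\|_1\mu^{-1}$, and coincides with $(\mu\id+\cR_p)^{-1}$, yielding \eqref{eq_Komatsu_noneg}.
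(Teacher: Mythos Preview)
Your proof is correct and follows essentially the same approach as the paper: Liouville plus Hahn--Banach duality for (i)--(ii), and the Laplace-transform resolvent formula for (iii). The only minor differences are that the paper constructs the explicit convolution kernel $\kappa_\mu(x)=\int_0^\infty e^{-t\mu}h_t(x)\,dt$ in (iii) rather than invoking abstract Hille--Yosida, and for the $p=1$ endpoint in (ii) obtains $\int_G \cR_1\phi=0$ via $\int_G X\phi=0$ for left-invariant vector fields rather than via $\int_G h_t=1$ as you do.
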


\begin{proof}
Let $ f\in \dom (\cR_p)$ be
such that $\cR_p f=0$
for $p\in [1,\infty)\cup\{\infty_o\}$.
By Theorem~\ref{thm_cRp} (iv), $f\in \cS'(G)$ and $\cR f=0$.
Consequently by Liouville's theorem, see Theorem \ref{thm_liouville_homogeneous_for_R}, 
 $f$ is a polynomial. 
Since $f$ is also in $L^p(G)$ for $p\in [1,\infty)$
or in $C_o(G)$ for $p=\infty_o$, $f$ must be identically zero.
This proves (i).

For (ii), let $\Psi$ be a bounded linear functional on $L^p(G)$ if $p\in [1,\infty)$
or on $C_o(G)$ if $p=\infty_o$ such that $\Psi$ vanishes identically on 
$\range(\cR_p)$.
Then $\Psi$ can be realised as the integration against a function $f\in L^{p'}(G)$ if $p\in [1,\infty)$ or 
a measure also denoted by $f\in M(G)$ if $p=\infty_o$.
Using the distributional notation, we have
$$
\Psi(\phi)=\langle f,\phi\rangle
\qquad \forall \phi\in L^p(G)\quad \mbox{or}\quad \forall\phi\in C_o(G)
.
$$
Then for any $\phi\in \cD(G)$, we know that $\phi\in \dom (\cR_p)$
and $\cR_p \phi=\cR\phi$
by Theorem~\ref{thm_cRp} (i)
thus
$$
0=\Psi(\cR_p(\phi))=\langle f, \cR (\phi)\rangle 
= \langle \bar \cR f, \phi\rangle
,
$$
since $\cR^t=\bar \cR$.
Hence $\bar \cR f =0$.
By Liouville's theorem, see Theorem \ref{thm_liouville_homogeneous_for_R}, 
this time applied to the positive Rockland operator $\bar \cR$,
we see that $ f$ is a polynomial.
This implies that $f\equiv0$,
 since $f$ is also a function in $L^{p'}(G)$ in the case  $p\in (1,\infty)$,
 whereas
 for $p=\infty_o$, $f$ is  in $M(G)$ thus an integrable polynomial on $G$.
For $p=1$, $f$ being a measurable bounded function and a polynomial,
 $f$ must be constant, i.e. $f\equiv c$ for some $c\in \bC$.
This shows that if $p\in (1,\infty)\cup\{\infty_o\}$
then $\Psi=0$ and $\range(\cR_p)$ is dense in $L^p(G)$ or $C_o(G)$,
whereas if $p=1$ then $\Psi:L^1(G)\ni \phi\mapsto c\int_G \phi$.
This shows (ii) for $p\in (1,\infty)\cup\{\infty_o\}$.

Let us study more precisely the case $p=1$.
It is easy to see that 
$$
\int_G X \phi (x) dx = -\int_{G} \phi(x)\  (X1)(x) dx = 0
$$
holds for any $\phi\in L^1(G)$ such that $X\phi\in L^1(G)$.
Consequently, for any $\phi\in \dom (\cR_1)$,
we know that $\phi$ and $\cR \phi$ are in $L^1(G)$ thus
 $\int_G \cR_1 \phi=0$.
 So the range of $\cR_1$ is included in 
$$
S:=\{\phi\in L^1(G) \ : \ \int_G \phi=0\}
\supset \range(\cR_1).
$$
Moreover, if $\Psi_1$ a bounded linear functional on $S$
 such that
$\Psi_1$ is identically 0 on $\range(\cR_1)$,
by the Hahn-Banach Theorem,
it can be extended into a bounded linear function $\Psi$ on $L^1(G)$.
As $\Psi$  vanishes identically on $\range(\cR_1)\subset S$, 
we have already proven that $\Psi$ must be of the form 
 $\Psi:L^1(G)\ni \phi\mapsto c\int_G \phi$ for some constant $c\in \bC$
 and its restriction to $S$ is $\Psi_1\equiv 0$.
 This concludes the proof of Part (ii).

Let us prove Part (iii).
Integrating the formula
$$
(\mu+\lambda)^{-1} = \int_0^\infty e^{-t(\mu+\lambda)} dt
$$
against the spectral measure $dE(\lambda)$ of $\cR_2$, 
we have formally
\begin{equation}
\label{eq_mu+cR_inv}
(\mu\id  +\cR_2)^{-1}=\int_0^\infty  e^{-t (\mu \id + \cR_2)}dt,
\end{equation}
and the convolution kernel of the operator on the right-hand side is (still formally) given by
$$
\kappa_\mu(x):=\int_0^\infty e^{-t\mu} h_t(x) dt.
$$

From the properties of the heat kernel $h_t$
(see Theorem \ref{thm_heat_kernel}), we see that
$\kappa_\mu$ is continuous on $G$ and that 
$$
\|\kappa_\mu\|_1\leq 
\int_0^\infty e^{-t\mu} \|h_t\|_1 dt 
=\|h_1\| 
\int_0^\infty e^{-t\mu} dt 
=\frac{\|h_1\|}{\mu} <\infty
.
$$
Therefore $\kappa_\mu \in L^1(G)$
and
the operator
$\int_0^\infty  e^{-t (\mu \id + \cR_2)}dt$
is bounded on $L^2(G)$.
Furthermore  Formula \eqref{eq_mu+cR_inv}
holds (it suffices to consider integration over $[0,N]$ with $N\to\infty$).

For any $\phi\in \cD(G)$ and $p\in [1,\infty)\cup\{\infty_o\}$, 
Theorem \ref{thm_cRp} (iv) implies 
$$
(\mu\id+\cR_p)\phi=(\mu\id+\cR)\phi= (\mu\id+\cR_2)\phi
\in \cD(G),
$$
thus
$$
\kappa_\mu *\left( (\mu\id+\cR_p)\phi\right)
=
\kappa_\mu * \left((\mu\id+\cR_2)\phi\right)
=
\phi.
$$
Hence the inverse of the operator $\mu\id+\cR_p$ coincide 
with the convolution operator $\phi\mapsto \phi*\kappa_\mu$ 
which is bounded on $L^p(G)$ if $p\in [1,\infty)$ and on $C_o(G)$ if $p=\infty_o$.
Furthermore the operator norm of the latter is 
$\leq \|\kappa_\mu\|_1\leq 
\|h_1\| \mu^{-1}$.
\end{proof}

\subsection{Fractional powers of operators $\cR_{p}$}

In this section we study the fractional powers of the operators $\cR_p$ and $\id+\cR_p$.

\begin{theorem}
\label{thm_fractional_power}
  Let $\cR$ be a positive Rockland operator on a graded group $G$.
We consider the operators $\cR_p$ defined
in Definition \ref{def_cRp}.
Let $p\in [1,\infty)\cup\{\infty_o\}$.
\begin{enumerate}
\item
\label{item_thm_fractional_power_common_cR_id+cR}
Let $\cA_p$ denote either $\cR_p$ or $\id+\cR_p$.

\begin{itemize}

\item For every $a\in \bC$, the operator $\cA_p^a$ is closed
and  injective with $(\cA_p^{a})^{-1}=\cA_p^{-a}$.
We have $\cA_p^0=\id$, and for any $n\in \bN$, $\cA^n$ coincides with the usual powers of differential operators on $\cD(G)$.
Furthermore, the operator $\cA_p^a$ is invariant under left translations.

\item  
For any $a,b\in \bC$, in the sense of operator graph, we have
 $ \cA_p^a \cA_p^b\subset\cA_p^{a+b}.$
 If $p\not=1$ then  the closure of  $\cA_p^a \cA_p^b$ is $\cA_p^{a+b}$.
\item Let $a_o\in \bC_+$.

\begin{itemize}

\item
If $\phi\in \range (\cA^{a_o})$ then
$\phi\in \dom(\cA^a)$ for all $a\in\bC$ 
with $-\Re a_o<\Re a<0$ 
and the function
$a \mapsto \cA^a \phi$ is holomorphic in $\{a\in \bC:\, -\Re a_o < \Re a<0\}$.

\item
If $\phi\in \dom (\cA^{a_o})$ then
$\phi\in \dom(\cA^a)$ for all $a\in\bC$ 
with $0<\Re a<\Re a_o$ 
and the function
$a \mapsto \cA^a \phi$ is holomorphic in $\{a\in \bC:\, 
0<\Re a < \Re a_o\}$.

\end{itemize}

\item
If $p\in (1,\infty)$ then the dual of $\cA_p$ is $ \bar \cA_{p'}$.
The dual of $\cA_{\infty_o}$ restricted to $L^1(G)$ is $\bar \cA_1$.
The dual of $\cA_1$ restricted to $C_o(G)\subset L^\infty(G)$ is $\bar \cA_{\infty_o}$.

\item 
If $ a, b\in \bC_+$ with $\Re b>\Re a$, then
$$
\exists C=C_{a, b} >0
\quad\forall \phi\in \dom(\cA_p^ b)\quad
\|\cA_p^ a \phi\| \leq C \|\phi\|^{1-\frac{\Re  a}{\Re b}}
\|\cA_p^ b \phi \|^{\frac{\Re  a}{\Re b}}
.
$$
\end{itemize}

\item 
\label{item_thm_fractional_domain_id+L_L}
For each $a\in \bC_+$, the operators $(\id+\cR_p)^a$ and $\cR_p^a$ are unbounded and their domains satisfy 
$\dom\left[(\id+\cR_p)^a\right]
=\dom (\cR_p^a)
=\dom\left[(\cR_p+\epsilon\id)^a\right]$
for all $\epsilon>0$,
and all these domains contain 
$\cS(G)$.

\item 
\label{item_thm_fractional_power_id_L_Gamma}
If $0<\Re a<1$ and $\phi\in \range(\cR_p)$ then
$$
\cR_p^{-a} \phi =\frac 1{\Gamma(a)}
\int_0^\infty t^{a-1} e^{-t\cR_p}\phi\, dt
,
$$
in the sense that $\lim_{N\to\infty}\int_0^N$ converges in the norm of $L^p(G)$ or $C_o(G)$.
\item
\label{item_thm_fractional_power_id_I+L_Gamma}
If $a\in \bC_+$, then the operator $(\id+\cR_p)^{-a} $ is bounded and for any $\phi\in \cX$ with $\cX=L^p(G)$ or $ C_o(G)$, we have
$$
(\id+\cR_p)^{-a} \phi =\frac 1{\Gamma(a)}
\int_0^\infty t^{a-1} e^{-t(\id+\cR_p)} \phi\, dt,
$$
in the sense of absolute  convergence: 
$\int_0^\infty t^{a-1} 
\|e^{-t(\id+\cR_p)} \phi\|_{\cX} dt <\infty$.

\item 
\label{item_thm_fractional_power_cRitau}
For any $\tau\in \bR$ and $p\in (1,\infty)$,
the operator $\cR_p^{i\tau}$ is bounded on $L^p(G)$.
Moreover 
$$
\exists C,\theta>0\quad
\forall \tau\in \bR\qquad
\|\cR_p^{i\tau}\|_{\sL(L^p)} \leq C e^{\theta|\tau|}.
$$
and  for any $a\in \bC$, 
$$
\dom(\cR_p^a)=\dom (\cR_p^{\Re a}).
$$
\item 
\label{item_thm_fractional_power_(id+cR)itau}
For any $\tau\in \bR$ and $p\in (1,\infty)$,
the operator $(\id+\cR_p)^{i\tau}$ is bounded on $L^p(G)$.
Moreover 
$$
\exists C,\theta>0\quad
\forall \tau\in \bR\qquad
\|(\id+\cR_p)^{i\tau}\|_{\sL(L^p)} \leq C e^{\theta|\tau|}.
$$
and  for any $a\in \bC$, 
$$
\dom((\id+\cR_p)^a)=\dom ((\id+\cR_p)^{\Re a})
.
$$

\item 
\label{item_thm_fractional_power_commute}
For any $a,b\in \bC$,
the two (possibly unbounded) operators $\cR^a_p$ and $(\id+\cR_p)^b$ commute.
\item 
\label{item_thm_fractional_power_homogeneous}
For any $a\in \bC$, the operator $\cR^a_p$ is homogeneous of degree $\nu a$.

\end{enumerate}
\end{theorem}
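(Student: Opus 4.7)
The plan is to derive most of the theorem from the general Komatsu--Balakrishnan theory of fractional powers of non-negative operators on a Banach space, using the fact established in Theorem \ref{thm_cRp} and Corollary \ref{cor_cRp_injective} that $\cR_p$ is injective with dense range and that $-\cR_p$ generates a strongly continuous equibounded semigroup $\{e^{-t\cR_p}\}_{t\geq 0}$ on $\cX:=L^p(G)$ (or $C_o(G)$ when $p=\infty_o$). The same structural properties hold for $\id+\cR_p$, with the additional benefit that $0$ belongs to its resolvent set by \eqref{eq_Komatsu_noneg}. This puts both $\cA_p=\cR_p$ and $\cA_p=\id+\cR_p$ squarely within the setting where fractional powers can be defined by Balakrishnan's formula, and where the abstract properties collected in item (1) — closedness, injectivity with $\cA_p^{-a}=(\cA_p^a)^{-1}$, agreement with integer powers on $\cD(G)$, left-invariance (inherited from $\cR$), the inclusion $\cA_p^a\cA_p^b\subset\cA_p^{a+b}$ with equality-after-closure in reflexive $\cX$, holomorphy in the appropriate strips, and the moment inequality $\|\cA_p^a\phi\|\lesssim \|\phi\|^{1-\Re a/\Re b}\|\cA_p^b\phi\|^{\Re a/\Re b}$ — are standard consequences. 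Duality in item~(1) is inherited from the duality $(\cR_p)^*=\bar\cR_{p'}$ of Theorem \ref{thm_cRp}(iii) via the Balakrishnan representation.

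For items (3) and (4), I would use Balakrishnan's integral formulas. When $\phi\in\range(\cR_p)$ and $0<\Re a<1$, the identity $\cR_p^{-a}\phi=\Gamma(a)^{-1}\int_0^\infty t^{a-1}e^{-t\cR_p}\phi\,dt$ converges in the norm of $\cX$ because $e^{-t\cR_p}\phi=e^{-t\cR_p}\cR_p\psi$ decays like $O(1/t)$ at infinity when $\phi=\cR_p\psi$. For $(\id+\cR_p)^{-a}$ the extra factor $e^{-t}$ gives absolute convergence for every $\phi\in\cX$ and every $a\in\bC_+$, establishing boundedness. The domain identity in item (2), $\dom((\id+\cR_p)^a)=\dom(\cR_p^a)=\dom((\cR_p+\epsilon\id)^a)$, follows by noting that, for $\Re a>0$, the differences between the bounded inverses $(\id+\cR_p)^{-a}$, $(\cR_p+\epsilon\id)^{-a}$, and the Balakrishnan kernel defining $\cR_p^{-a}$ all map $\cX$ into itself boundedly. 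That $\cS(G)\subset\dom$ is verified by first showing $\cS(G)\subset\dom(\cR_p^n)$ for every $n\in\bN$ using Theorem \ref{thm_cRp}(iv), then interpolating via the moment inequality.

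Items (5) and (6) — boundedness of the imaginary powers $\cR_p^{i\tau}$ and $(\id+\cR_p)^{i\tau}$ on $L^p(G)$ for $p\in(1,\infty)$ with exponential bound $Ce^{\theta|\tau|}$ — are the main obstacle; one cannot obtain these from abstract semigroup theory alone. The approach is to realise $\cR_p^{i\tau}$ as a spectral multiplier of $\cR_2$ and apply a Hulanicki-type multiplier theorem for positive Rockland operators: the function $m(\lambda)=\lambda^{i\tau}$, after suitable smooth truncation and dyadic decomposition, satisfies Mihlin--H\"ormander-type conditions whose relevant norms grow polynomially in $|\tau|$, while the singularities at $0$ and $\infty$ produce the exponential growth $e^{\theta|\tau|}$. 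The corresponding convolution kernel is a Calder\'on--Zygmund kernel in the homogeneous sense, hence bounded on $L^p(G)$ by Coifman--Weiss theory. The identity $\dom(\cR_p^a)=\dom(\cR_p^{\Re a})$ then follows from the complex interpolation identity $\cR_p^a=\cR_p^{\Re a}\cR_p^{i\Im a}$ combined with the boundedness just obtained. The argument for $(\id+\cR_p)^{i\tau}$ is parallel, with the multiplier $(1+\lambda)^{i\tau}$ being in fact smoother near $0$.

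Finally, item (7) (commutation of $\cR_p^a$ and $(\id+\cR_p)^b$) is immediate from the fact that, through Balakrishnan's formula, both operators are limits of expressions built from the single resolvent family of $\cR_p$; they commute on the dense subspace $\bigcap_n\dom(\cR_p^n)\supset\cS(G)$ and the commutation extends by closedness. For item (8), homogeneity of $\cR_p^a$ of degree $\nu a$ is transferred from $\cR$: since $\cR$ is $\nu$-homogeneous, the group dilation $D_r$ intertwines $e^{-t\cR_p}$ with $e^{-r^\nu t\cR_p}$ by \eqref{eq_heat_kernel_homogeneity}, and substituting into the Balakrishnan integral for $\cR_p^{-a}$ with $0<\Re a<1$ gives $D_r\cR_p^{-a}D_{r^{-1}}=r^{-\nu a}\cR_p^{-a}$; inversion and the semigroup relation $\cA_p^a\cA_p^b\subset\cA_p^{a+b}$ extend this to all $a\in\bC$.
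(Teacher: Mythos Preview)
Your treatment of items (1)--(4), (7), and (8) via the Komatsu--Balakrishnan theory matches the paper's approach; the paper cites the Mart\'{\i}nez--Sanz monograph for exactly these abstract facts, and derives item (6) from item (5) just as you outline.

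The substantive divergence is in item (5), the $L^p$-boundedness of $\cR_p^{i\tau}$. You propose to invoke a Hulanicki-type Mihlin--H\"ormander multiplier theorem for $m(\lambda)=\lambda^{i\tau}$. The paper instead constructs the convolution kernel explicitly: starting from the Balakrishnan integral $\cR_p^{-a/\nu}\phi=\Gamma(a/\nu)^{-1}\int_0^\infty t^{a/\nu-1}\phi*h_t\,dt$ for $0<\Re a<\nu$ and $\phi\in\cR_p(\cS(G))$, an integration by parts produces the kernel $\kappa_a(x)=\Gamma(1+a/\nu)^{-1}\int_0^\infty t^{a/\nu}\cR h_t(x)\,dt$, which is shown (Lemma \ref{lem_kernel_cRitau}) to be smooth away from $0$, homogeneous of degree $a-Q$, and---crucially, when $\Re a=0$---to have vanishing mean on the unit sphere, so that by Proposition \ref{prop_-Qhomo_distrib} it is a kernel of type $a$. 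Boundedness on $L^p$ then follows from the singular-integral theory already in place (Theorem \ref{thm_op_type_Lp_bdd}), and analytic continuation in $a$ together with the density of $\cR_p(\cS(G))$ (Lemma \ref{lem_cRN(cSG)}) identifies this convolution operator with $\cR_p^{i\tau}$. The exponential bound $Ce^{\theta|\tau|}$ is then read off from abstract theory (Mart\'{\i}nez--Sanz, Proposition 8.1.1), not from kernel estimates; your attribution of exponential growth to ``singularities at $0$ and $\infty$'' of $\lambda^{i\tau}$ is off, since $|\lambda^{i\tau}|\equiv 1$.

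Your multiplier route is in principle viable, but a Mihlin--H\"ormander theorem for general positive Rockland operators on graded (non-stratified) groups is itself a nontrivial external ingredient; the paper's explicit-kernel argument stays entirely within the Folland--Stein homogeneous-kernel framework set up in Section \ref{sec_preliminary}, which is both more self-contained and makes the mean-zero cancellation condition for $\kappa_{i\tau}$ the visible heart of the matter.
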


Here we say that two (possibly unbounded) operators $A$ and $B$ commute when 
$$
x\in \dom (AB)\cap\dom (BA) \Longrightarrow ABx=BAx
.
$$
Let us recall that the domain of the product $AB$ of two (possibly unbounded) operators $A$ and $B$ on the same Banach space $\cX$ is formed by the elements $x\in \cX$ such that $x\in \dom (B)$ and $Bx\in \dom(A)$.

In Theorem~\ref{thm_fractional_power}
Part \eqref{item_thm_fractional_power_id_L_Gamma},
 $\Gamma$ denotes the usual Gamma function.

\begin{proof}
By Theorem \ref{thm_cRp} (i), the operator $\cR_p$ is closed and densely defined.
By Corollary \ref{cor_cRp_injective},
it is injective and Komatsu-non-negative in the sense that $(-\infty,0)$ is included in its resolvant set and it satisfies Property \eqref{eq_Komatsu_noneg}.
 Necessarily  $\id+\cR_p$ 
 also satisfies these properties.
 Furthermore $-(\id+\cR_p)$ generates an exponentially stable semigroup:
 $$
\| e^{-t(\id+\cR_p)}\|_{\sL(L^p(G))}
\leq e^{-t} \| e^{-t \cR_p}\|_{\sL(L^p(G))}
\leq e^{-t} \|h_1\|_1 ,
$$
and similarly for $C_o(G)$.

Most of the statements then follow from the general properties of fractional powers.
References for these results are in \cite{martinez+sanz_bk} as follows:
for Part \eqref{item_thm_fractional_power_common_cR_id+cR}
Corollaries 5.2.4, 5.1.12, and 5.1.13, together with
 and Section 7.1,
 for Parts \eqref{item_thm_fractional_power_id_L_Gamma}
 and
  \eqref{item_thm_fractional_power_id_I+L_Gamma}
Lemma 6.1.5,
 for Part \eqref{item_thm_fractional_power_cRitau}
 Section 7.1 and Corollary 7.1.2.
For Part \eqref{item_thm_fractional_domain_id+L_L},
 the property that the domains coincide comes from \cite[Theorem 5.1.7]{martinez+sanz_bk}.
 That they contain $\cS(G)$ and that the operators are unbounded is true for integer powers, hence true using Part  \eqref{item_thm_fractional_power_common_cR_id+cR}.
  
This concludes the proof of Theorem~\ref{thm_fractional_power},
except for Parts \eqref{item_thm_fractional_power_cRitau} and 
\eqref{item_thm_fractional_power_(id+cR)itau}.
For the moment, let us admit  that all the operators $\cR_p^{i\tau}$, $\tau\in \bR$,
are bounded.
Then for any $a\in \bC$, 
$\cR_p^{a+i\tau } $ is the closure of $\cR_p ^{i\tau}\cR_p^a$
by Part \eqref{item_thm_fractional_power_common_cR_id+cR},
so $\dom( \cR_p^{a+i\tau })  \supset \dom (\cR_p^a)$.
Applying this to $a\in \bR$ and $a-i\tau$, this shows
$\dom (\cR_p^{a'})=\dom (\cR_p^{\Re a'})$ for any $a'\in \bC$.
By Part \eqref{item_thm_fractional_domain_id+L_L},
we must also have
$\dom (\id+\cR_p)^{a'}=\dom (\id+\cR_p)^{\Re a'}$ for any $a'\in \bC$.
Now by Part \eqref{item_thm_fractional_power_common_cR_id+cR},
for any $\tau\in \bR$,
we have in the sense of operators
$$
(\id+\cR_p)^{i\tau} \supset (\id+\cR_p) (\id+\cR_p)^{-1+i\tau}
\quad\mbox{and}\quad 
\id \supset (\id+\cR_p)^{1-i\tau} (\id+\cR_p)^{-1+i\tau},
$$
hence 
$$
\range (\id+\cR_p)^{-1+i\tau} \subset \dom (\id+\cR_p)^{1-i\tau}  
=\dom  (\id+\cR_p),
$$
so
$$
\dom (\id+\cR_p)^{i\tau}
\supset
\dom (\id+\cR_p)^{-1+i\tau}.
$$
This last domain is the whole space $L^p$ since $(\id+\cR_p)^{-1+i\tau}$ is bounded by Part \eqref{item_thm_fractional_power_id_I+L_Gamma}.
Then the closed graph theorem\footnote{ref?}
implies that $(\id+\cR_p)^{i\tau}$ is bounded.
The bounds for the operator norms of 
$\cR_p^{i\tau}$ and 
$(\id+\cR_p)^{i\tau}$
comes from 
\cite[Proposition 8.1.1]{martinez+sanz_bk}.

Hence the proof of  Theorem \ref{thm_fractional_power} 
will be complete once we have proved that each  operator $\cR_p^{i\tau}$,
$\tau\in \bR$,  is bounded.
This will require a couple of technical lemmata.  
\end{proof}

\begin{lemma}
\label{lem_kernel_cRitau}
  Let $\cR$ be a positive Rockland operator on $G$,
  with heat kernels $h_t$.
For each $a\in \bC$ with $|\Re a|<1$ and $x\not=0$, 
the integral
\begin{equation}
\label{eq_lem_kernel_cRitau_kappaa}
\frac 1{\Gamma(\frac a \nu +1)}
\int_0^\infty t^{\frac a \nu} \cR h_t(x) dt
\end{equation}
is absolutely convergent for every $x\not=0$.
This defines kernel of type $a$.
\end{lemma}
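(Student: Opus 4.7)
The plan is to verify the three defining properties of a kernel of type $a$ from Definition \ref{def_kernel_type_nu}: smoothness of $\kappa_a$ on $G\setminus\{0\}$, homogeneity of degree $a-Q$, and extension of $\kappa_a$ to a distribution on $G$. For the pointwise convergence and smoothness, I use that $\cR$, being a homogeneous left-invariant differential operator of degree $\nu$, writes $\cR h_t(x)$ as a finite linear combination of $X^\alpha h_t(x)$ with $[\alpha]=\nu$. The estimates \eqref{eq_control_heat_kernel_xfixed0} then give, uniformly for $x$ in any compact subset of $G\setminus\{0\}$, that $|\cR h_t(x)|=O(t^N)$ as $t\to 0$ for every $N\in\bN_0$ and $|\cR h_t(x)|=O(t^{-Q/\nu-1})$ as $t\to\infty$. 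Thus $\int_0^\infty t^{\Re a/\nu}|\cR h_t(x)|\,dt$ converges absolutely: near $t=0$ I pick $N$ with $N+\Re a/\nu>-1$, and at infinity I use $\Re a<Q$, both holding since $|\Re a|<1\leq \min(\nu,Q)$. Applying the same argument to $X^\alpha\cR h_t$ justifies differentiating under the integral and yields $\kappa_a\in C^\infty(G\setminus\{0\})$.

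Homogeneity of degree $a-Q$ is a direct change of variables. From \eqref{eq_heat_kernel_homogeneity} one has $h_t(rx)=r^{-Q}h_{t/r^\nu}(x)$; applying $\cR$ in $x$ together with its degree-$\nu$ homogeneity yields $(\cR h_t)(rx)=r^{-Q-\nu}(\cR h_{t/r^\nu})(x)$, and substituting $s=t/r^\nu$ in the integral defining $\kappa_a(rx)$ then gives $\kappa_a(rx)=r^{a-Q}\kappa_a(x)$ as a function on $G\setminus\{0\}$.

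The delicate step is to show that $\kappa_a$ extends to a distribution on $G$, which then automatically inherits the homogeneity by the same change of variables. My candidate is
$$
\langle \kappa_a,\phi\rangle:=\frac{1}{\Gamma(a/\nu+1)}\int_0^\infty t^{a/\nu}\langle \cR h_t,\phi\rangle\,dt,\quad \phi\in\cD(G),
$$
and I must check absolute convergence of the right-hand side for $|\Re a|<1$. Near $t=0$, rewriting $\langle \cR h_t,\phi\rangle=\langle h_t,\bar\cR\phi\rangle$ and using $\|h_t\|_1=\|h_1\|_1$ gives the uniform bound $|\langle \cR h_t,\phi\rangle|\leq \|h_1\|_1\|\bar\cR\phi\|_\infty$, so $t^{\Re a/\nu}$ is integrable since $\Re a>-\nu$. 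Near $t=\infty$, I invoke the scaling $\cR h_t(x)=t^{-Q/\nu-1}\cR h_1(t^{-1/\nu}x)$ that follows from \eqref{eq_heat_kernel_homogeneity-1}, substitute $y=t^{-1/\nu}x$, and exploit the compact support of $\phi$ (which confines $y$ to a ball of radius $\sim t^{-1/\nu}$) together with continuity of $\cR h_1$ at $0$ to extract $|\langle \cR h_t,\phi\rangle|\leq C_\phi\, t^{-1-Q/\nu}$; the combined factor $t^{\Re a/\nu-1-Q/\nu}$ is then integrable at infinity as $\Re a<Q$. Finally, for $\phi\in\cD(G)$ supported away from $0$, Fubini's theorem -- justified by the uniform pointwise estimates from the first paragraph on $\supp\phi$ -- identifies this distribution with the pointwise function $\kappa_a$ on $G\setminus\{0\}$. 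I expect the main obstacle to be precisely the large-$t$ decay estimate $|\langle \cR h_t,\phi\rangle|=O(t^{-1-Q/\nu})$, which forces one to convert the spatial spreading of the heat mass into integrable decay using the interplay between homogeneous scaling and the compact support of the test function; the small-$t$ side is comparatively soft because $\|h_t\|_1$ is conserved.
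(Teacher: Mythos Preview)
Your argument is correct. The first two steps (pointwise absolute convergence with smoothness on $G\setminus\{0\}$ via \eqref{eq_control_heat_kernel_xfixed0}, and homogeneity by the substitution $s=t/r^\nu$) coincide with the paper's. The difference lies in how you obtain the distributional extension.

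The paper treats only the critical case $\Re a=0$ and proves it by showing that the mean value $m_{\kappa_a}=\int_\fS\kappa_a\,d\sigma$ vanishes: it computes $\int_{b_1<|x|<b_2}\kappa_a(x)\,dx$ in two ways (polar coordinates versus the heat-kernel integral with $\partial_t$ and a change of variable), and lets $b_1\to0$ to force $m_{\kappa_a}=0$, then invokes Proposition~\ref{prop_-Qhomo_distrib}. You instead define $\langle\kappa_a,\phi\rangle$ directly by the $t$-integral of $\langle\cR h_t,\phi\rangle$ and prove absolute convergence by a duality bound $|\langle h_t,\bar\cR\phi\rangle|\le\|h_1\|_1\|\bar\cR\phi\|_\infty$ for small $t$ and the scaling estimate $|\langle\cR h_t,\phi\rangle|\le C_\phi\,t^{-1-Q/\nu}$ for large $t$. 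Your route is more uniform: it handles the whole strip $|\Re a|<1$ (indeed $-\nu<\Re a<Q$) in one stroke, including $\Re a<0$, which the paper does not address explicitly. The paper's route, on the other hand, ties the result to the structural characterisation of Proposition~\ref{prop_-Qhomo_distrib} and yields the extra information that the mean value of $\kappa_{i\tau}$ is zero, which is the classical obstruction for a $(-Q)$-homogeneous function to be a principal-value distribution.
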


\begin{proof}[Proof of Lemma~\ref{lem_kernel_cRitau}]
The estimates in \eqref{eq_control_heat_kernel_xfixed0}
 show the absolute convergence and the smoothness
of the function $\kappa_a$ on $G\backslash\{0\}$ given via 
\eqref{eq_lem_kernel_cRitau_kappaa}.
One checks easily, 
using an easy change of variable, 
 that $\kappa_a$ is 
homogeneous of degree $a-Q$.

Let us show that if $a=i\tau$ with $\tau\in \bR$, 
then $\kappa_a$ has mean average 0.
Let $b_1,b_2\in \bR$ with $0<b_1<b_2$.
We fix a homogeneous pseudo-norm $|\cdot|$ on $G$
and use the notation for the corresponding 
the polar change of coordinates
in Proposition \ref{prop_polar_coord}.
One can show easily that for $a=i\tau$
$$
\int_{b_1<|x|<b_2} 
\!\!\!\!\!\!
\kappa_a (x)dx
=
c_{\tau,b_1,b_2}
m_{\kappa_a}
\quad\mbox{where}\quad
c_{\tau,b_1,b_2}=\left\{\begin{array}{ll}
 \frac{b_{1}^{i\tau}-b_{2}^{i \tau}}{-i\tau}
&\mbox{if}\ \tau\not=0, \\
m_{\kappa_a} \ln \frac {b_{2}}{b_{1}}
&\mbox{if}\ \tau=0,
\end{array}\right.
$$
and $m_\tau=\int_\fS  \kappa_a d\sigma$ is the mean average of $\kappa_a$.
The properties of the heat kernel
(see Theorem \ref{thm_heat_kernel}) yield
\begin{eqnarray*}
&&\int_{b_1<|x|<b_2} 
\!\!\!\!\!\!
\kappa_a (x)dx
=
\frac 1{\Gamma(1+i\frac \tau \nu)}
\int_0^\infty t^{i\frac \tau \nu} 
\partial_t 
\int_{b_1<t^{\frac1\nu} |y|<b_2}
\!\!\!\!\!\!
\!\!\!\!\!\!
 h_1(y) dy\
 dt\\
 &&\quad=
 \frac 1{\Gamma(1+i\frac \tau \nu)}
\int_0^\infty t^{i\frac \tau \nu} 
 \frac  {-t^{-\frac 1\nu-1}}\nu
\left[
\int_\fS h_1(rx) d\sigma(x) r^{Q-1}
\right]_{r=t^{-\frac1\nu} b_1}^{t^{-\frac1\nu} b_2} 
dt.
\end{eqnarray*}
Moreover choosing e.g. $b_2=2b_1$ and $b_1=\ell^{-1}$ for every $\ell\in \bN$, we have
$|m_{\kappa_a}|\leq C_{\tau,h} \ell^{-Q+1}$,
hence $m_{\kappa_a}=0$.
Hence by Definition \ref{def_kernel_type_nu}
and Proposition \ref{prop_-Qhomo_distrib},
 $\kappa_a$ is a kerne of type $a$.
\end{proof}

We will also need the following technical result:

\begin{lemma}
\label{lem_cRN(cSG)}
We keep the setting and notation of Theorem~\ref{thm_cRp}.
For any $N\in \bN$,
the space $\cR^N(\cS(G))=\cR^N_p(\cS(G))$ is contained in $ \dom (\cR^N_p)\cap \range(\cR^N_p)$ and in $\cS(G)$.
The space $\cR^N_p(\cS(G))$ is  dense in $L^p(G)$ if $p\in (1,\infty)$,
and $\cR^N_{\infty_o}(\cS(G))$ is dense in $C_o(G)$.
\end{lemma}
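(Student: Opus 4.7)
The plan is to separate the set-inclusion statements from the density statement. The first follows from the characterisation of $\dom(\cR_p)$ obtained in Theorem~\ref{thm_cRp}~(iv), while the second is a duality argument powered by the Liouville theorem, Theorem~\ref{thm_liouville_homogeneous_for_R}.

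First I would deal with the inclusions. Since $\cR$ is a left-invariant differential operator it preserves $\cS(G)$, and hence $\cR^N(\cS(G))\subset \cS(G)\subset L^p(G)\cap C_o(G)$. For any $f\in\cS(G)$ all the iterated images $\cR^k f$, $0\leq k\leq N$, are again Schwartz and so lie in $L^p(G)$ (or in $C_o(G)$); iterating the maximality statement $\dom(\cR_p)=\{\phi\in L^p(G):\cR\phi\in L^p(G)\}$ with $\cR_p\phi=\cR\phi$ from Theorem~\ref{thm_cRp}~(iv), one gets $f\in\dom(\cR_p^N)$ and $\cR_p^N f=\cR^N f$. This already yields the equality $\cR^N(\cS(G))=\cR_p^N(\cS(G))$. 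Moreover the element $\cR^N f\in\cS(G)$ itself lies in $\dom(\cR_p^N)$ by the same iteration, and by construction it equals $\cR_p^N f$ with $f\in\dom(\cR_p^N)$, so it also lies in $\range(\cR_p^N)$.

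For density I would argue by duality. For $p\in(1,\infty)$ the dual of $L^p(G)$ is represented by $L^{p'}(G)$, and for $p=\infty_o$ the dual of $C_o(G)$ is the space $M(G)$ of finite signed Radon measures. Let $\Psi$ be a continuous linear functional vanishing on $\cR_p^N(\cS(G))$, realised via such a $g$. Then for every $f\in\cS(G)$, and in particular for every $f\in\cD(G)$, one has $\langle g,\cR^N f\rangle=0$; because $\cR$ is formally self-adjoint one has $(\cR^N)^t=\bar\cR^N$, and this is exactly the distributional identity $\bar\cR^N g=0$ in $\cS'(G)$. Since $\bar\cR^N$ is again a positive Rockland operator (closure of the class under conjugation and integer powers, as noted in Section~\ref{subsec_pos_R_op}), Theorem~\ref{thm_liouville_homogeneous_for_R} forces $g$ to be a polynomial. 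As $g$ is simultaneously an element of $L^{p'}(G)$ with $p'\in[1,\infty)$ (in the case $p\in(1,\infty)$) or of $M(G)$ (in the case $p=\infty_o$), this polynomial must vanish identically, whence $\Psi\equiv 0$; the Hahn--Banach theorem then yields density of $\cR_p^N(\cS(G))$ in $L^p(G)$, respectively in $C_o(G)$.

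The only real subtlety is the passage from vanishing of $\Psi$ on $\cR_p^N(\cS(G))$ to the distributional identity $\bar\cR^N g=0$: this is integration by parts, but I would take a moment to note that it is legitimate because $\cR^N f\in\cS(G)$ whenever $f\in\cS(G)$, so the pairing with $g\in L^{p'}(G)$ or $g\in M(G)$ is unambiguous and agrees with the action on $\cD(G)$. Once this is in place, every other ingredient has already been established in Theorems~\ref{thm_cRp} and~\ref{thm_liouville_homogeneous_for_R}.
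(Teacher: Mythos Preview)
Your proof is correct and follows essentially the same route as the paper: the inclusions come from iterating Theorem~\ref{thm_cRp}~(iv), and density is obtained by a Hahn--Banach/duality argument reducing to $\bar\cR^N g=0$ and then invoking the Liouville theorem (Theorem~\ref{thm_liouville_homogeneous_for_R}). Your write-up is in fact more careful than the paper's, which glosses over the iteration and the passage to $\bar\cR^N$.
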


\begin{proof}[Proof of Lemma~\ref{lem_cRN(cSG)}]
By Theorem~\ref{thm_cRp} Part (iv),
$$
\cS(G)\subset \dom (\cR^N_p)
\quad\mbox{and}\quad 
\cR^N_p(\cS(G))=\cR^N(\cS(G))\subset\cS(G)
.
$$

It remains to prove the properties of density.
For this we proceed as in the proof of Corollary~\ref{cor_cRp_injective}.
Let $\Psi$ be a bounded linear functional on $L^p(G)$ if $p\in (1,\infty)$
or on $C_o(G)$ if $p=\infty_o$ such that $\Psi$ vanishes identically on $\cR^N_p(\cS(G))$.
Realising $\Psi$ as the integration against a function $f\in L^{p'}(G)$ if $p\in (1,\infty)$ or $f\in M(G)$ if $p=\infty_o$, 
we have
 $\cR^N  f =0$.
Applying  Liouville's Theorem to $\cR^{N_1}$ (see Theorem \ref{thm_liouville_homogeneous_for_R}), 
this shows that 
$ f$ is a polynomial hence it must be identically zero.
This shows that $\Psi\equiv0$ and $\cR_p^N(\cS(G))$ is dense in $L^p(G)$.
\end{proof}

We can now prove
Part \eqref{item_thm_fractional_power_cRitau} of 
Theorem~\ref{thm_fractional_power}.

\begin{proof}[Proof of Part \eqref{item_thm_fractional_power_cRitau} of 
Theorem~\ref{thm_fractional_power}]
We keep the notation of Lemma~\ref{lem_kernel_cRitau}
and its proof.
The function $\kappa_a$ defined via \eqref{eq_lem_kernel_cRitau_kappaa}
is  smooth away from the origin,
$(a-Q)$-homogeneous, and with mean average 0 if $a\in i\bR$.
We assume $\Re a\in [0,Q)$.
By Theorem \ref{thm_op_type_Lp_bdd} and
Proposition \ref{prop_-Qhomo_distrib},
 the operator
$f\mapsto f*\kappa_a$ 
is bounded form $L^q(G)$ to $L^p(G)$ where
$\frac 1q-\frac 1p = \frac{\Re a} Q$,
$p,q\in (1,\infty)$.

By Lemma~\ref{lem_cRN(cSG)}, 
we can  apply the analyticity results of Theorem~\ref{thm_fractional_power}:
for $\phi\in \cR_p(\cS(G))$, 
$a\mapsto \cR_p^{-\frac a\nu}\phi$ is holomorphic on the strip 
$\{a\in \bC \ : \ |\Re \frac a\nu|<1\}$.
Furthermore, by Theorem \ref{thm_fractional_power} 
\eqref{item_thm_fractional_power_id_L_Gamma}, 
if $0<\Re \frac a\nu <1$, 
we have, with convergence in $L^p(G)$,
$$
\cR_p^{-\frac a\nu}\phi 
= \lim_{R\to\infty}
\frac 1{\Gamma(\frac a\nu)}
\int_0^R t^{\frac a \nu-1} \phi*h_t dt
.
$$
Integrating by parts, we get
\begin{equation}
\label{eq_pf_item_thm_fractional_power_cRitau}
\int_0^R t^{\frac a \nu-1} \phi*h_t  dt
=
\left[\frac {t^{\frac a\nu}}{\frac a\nu} 
\phi*h_t \right]_{t=0}^R
-
\int_0^R 
\frac {t^{\frac a\nu}}{\frac a\nu}
\partial_t \left(\phi*h_t \right) dt
.
\end{equation}
Now the first term (the bracket) at $t=0$ gives $0$ since $\Re \frac a\nu>0$
and since $\phi*h_{t}\to\phi$ in $L^{p}(G)$ as $t\to 0$ 
by Proposition \ref{prop_heat_kernel_semigroup_Lp}.
For the first term at $t=R$,
by the property of homogeneity of $h_t$ (see \eqref{eq_heat_kernel_homogeneity-1}),
we have
$$
\|\phi*h_t\|_p \leq  
t^{-\frac Q\nu}
\|h_1\circ D_{t^{-\frac 1\nu}}\|_p \|\phi\|_1
=
t^{-\frac Q\nu} t^{\frac Q{\nu p}}
\|h_1\|_p \|\phi\|_1
,
$$
so that the first term at $R$ gives $0$ as $R\to\infty$, for $\Re a<Q(1-\frac 1p).$
For the second term in the right-hand side of\eqref{eq_pf_item_thm_fractional_power_cRitau},
as $\partial_t \left(\phi*h_t \right) = \phi*\partial h_t =\phi *\cR h_t$,
we have 
$$
\frac 1{\Gamma(\frac a\nu)}\int_0^R 
\frac {t^{\frac a\nu}}{\frac a\nu}
\partial_t \left(\phi*h_t \right) dt
=
\frac 1{\Gamma(1+\frac a\nu)}\phi*\int_0^R t^{\frac a\nu}
\cR h_t dt
.
$$
By Lemma \ref{lem_kernel_cRitau}, if $\Re a<Q$,
this converges to
$\phi *\kappa_a$ in $L^p(G)$ as $R\to\infty$ since by the first part of this proof, we have
$$
\|\phi*\kappa_a\|_{L^p(G)}\leq C \|\phi \|_{L^q(G)} 
.
$$

We have obtained that 
for each $\phi \in \cR_p(\cS(G))$,
$a\mapsto \cR_p^{-\frac a\nu}\phi$ 
 is holomorphic on the strip 
$\{|\Re \frac a\nu|<1\}$
and coincides with $a\mapsto \phi *\kappa_a$
on $\{0<\Re a< Q(1-\frac 1p)\}$.
It is easy to check that $a\mapsto \phi *\kappa_a$ is
holomorphic on the strip $\{0<\Re a<Q\}$
and continuous on $\{0\leq \Re a<Q\}$.
This implies that, for $\Re a=0$, 
the closed operator
$\cR_p^{-\frac a\nu}$
and the bounded operator $\phi\mapsto  \phi*\kappa_a$
coincide on the dense subspace $\cR_p(\cS(G))$,
the latter convolution operator being bounded on $L^{p}(G)$ 
by Theorem \ref{thm_op_type_Lp_bdd} and
Proposition \ref{prop_-Qhomo_distrib}.
Thus for $\Re a=0$ the operator  $\cR_p^{-\frac a\nu}$ is bounded 
and is the convolution operator with kernel $\kappa_a$.
This concludes the proof of Part \eqref{item_thm_fractional_power_cRitau}
of  Theorem~\ref{thm_fractional_power}
and of the whole theorem.
\end{proof}

\begin{remark}
\label{rem_opnorm_I+R_itau}
The bound for  $\|\cR_p^{i\tau}\|_{\sL(L^p)}$ given in Theorem~\ref{thm_fractional_power}
\eqref{item_thm_fractional_power_cRitau}
may be improved by tracking down the various constants in the proof above and 
in the proof of Theorem \ref{thm_op_type_Lp_bdd}.
In the case of a sub-Laplacian, that is, $G$ stratified and $\cR=-\cL$,
using another argument,
Folland showed \cite[Proposition 3.14]{folland_75}
that 
$$
\|\cR_p^{i\tau}\|_{\sL(L^p)}\leq C_p |\Gamma (1-\tau)|^{-1}
\quad\mbox{and}\quad
\|(\id+\cR_p)^{i\tau}\|_{\sL(L^p)}\leq C_p |\Gamma (1-\tau)|^{-1},
$$
with $C_p$ independent of $\tau\in \bR$.
However his proof uses heavily the fact that the heat semigroup 
$\{e^{t\cL}\}_{t>0}=\{e^{-t\cR}\}_{t>0}$ is a strongly continuous semigroup of contractions 
preserving positivity,
see Remark \ref{rem_proof_thm_heat_kernel_schwartz}.
This can not be adapted in a simple way to the case of  a general Rockland operator.

We will not pursue the question of improving the bounds 
for $\|\cR_p^{i\tau}\|_{\sL(L^p)}$ and $\|(\id+\cR_p)^{i\tau}\|_{\sL(L^p)}$.
Indeed we will only need some bounds for  $\|(\id+\cR_p)^{i\tau}\|_{\sL(L^p)}$ 
 to show the property of interpolation between Sobolev spaces
(i.e. in the proof of Theorem \ref{thm_sobolev_spaces_interpolation}),
and the  bounds given in Theorem~\ref{thm_fractional_power} 
and later in Corollary \ref{cor_Riesz_Bessel_opnorm_power_I+R}
will be sufficient for our purpose.
\end{remark}

\subsection{Riesz and Bessel potentials}
\label{subsec_Riesz+Bessel_potentials}

In the next corollary, we proceed as in the proof of 
Theorem~\ref{thm_fractional_power}
\eqref{item_thm_fractional_power_cRitau}
 to realise 
$\cR_p^{-\frac a\nu}$ as a convolution operator with a homogeneous kernel smooth away from the origin
for certain values of $a$.
We also consider the left-invariant (but non-homogeneous) operator 
$(\id+\cR_p)^{-\frac a\nu}$.

\begin{definition}
Mimicking  the usual terminology in the Euclidean setting,
we call the operators $\cR^{-a/\nu}$ for  $\{a\in \bC, \ 0<\Re a<Q \}$
and $(\id+\cR)^{-a/\nu}$ for $a\in \bC_+$,
the \emph{Riesz potential} and the \emph{Bessel potential}, 
respectively.
In the sequel we will denote their kernels by $\cI_a$ and $\cB_a$, respectively,
as defined in the following:
\end{definition}

\begin{corollary}
\label{cor_Riesz_Bessel}
We keep the setting and notation of Theorem~\ref{thm_cRp}.

\begin{itemize}
\item[(i)]
Let $a\in \bC$ with $0<\Re a<Q$.
The integral 
$$
\cI_a(x):= \frac 1{\Gamma(a/\nu)}
\int_0^\infty t^{\frac{a}{\nu} -1} h_t(x) dt
,
$$
converges absolutely for every $x\not=0$.
This defines a distribution $\cI_a$ which is a kernel of type $a$, 
that is,
smooth away from the origin and $(a-Q)$-homogeneous.

For any $p\in (1,\infty)$,
if $\phi\in \cS(G)$ or, more generally, 
if $\phi\in L^q(G)\cap L^p(G)$
where $q\in [1,\infty)$ is given by
$\frac 1q -\frac 1p = \frac{\Re a} Q$,
then 
$$
\phi\in \dom (\cR_p^{-\frac a \nu})
\quad\mbox{and}\quad\cR_p^{- \frac a\nu} \phi =\phi*\cI_a \in L^p(G)
.
$$

\item[(ii)]
Let $a\in \bC_+$.
The integral 
$$
\cB_a(x):=
\frac 1{\Gamma(\frac a\nu)} \int_0^\infty t^{\frac a {\nu} -1} e^{-t} h_t(x) dt
,
$$
converges absolutely for every $x\not=0$.
The function $\cB_a$ is always smooth away from 0
and integrable on $G$.
If $\Re a> Q/2$, then $\cB_a\in L^2(G)$.

For each $a\in \bC_+$,
the operator $(\id+ \cR_p)^{-a/\nu}$ is a bounded
convolution operator on $L^p(G)$ for $p\in [1,\infty)$
or $C_o(G)$ for $p=\infty$,
 with the same (right convolution) kernel $\cB_a$.

If $a,b\in \cB_+$, then as integrable functions,
we have
$\cB_a*\cB_b=\cB_{a+b}$.
\end{itemize}
\end{corollary}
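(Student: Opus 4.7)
The plan is to mimic the argument used in the proof of Theorem~\ref{thm_fractional_power} part~\eqref{item_thm_fractional_power_cRitau} (the construction of the kernel $\kappa_a$), but now integrating $t^{a/\nu-1}h_t$ rather than $t^{a/\nu}\cR h_t$. In both parts, the key analytic tool is the pointwise control of the heat kernel in \eqref{eq_control_heat_kernel_xfixed0}: for $x\neq 0$ fixed, $|X^\alpha h(x,t)|$ is $O(t^N)$ for every $N$ as $t\to 0^+$ and $O(t^{-(Q+[\alpha])/\nu})$ as $t\to\infty$.

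For part (i), absolute convergence of $\cI_a(x)$ for $x\neq 0$ follows immediately from these estimates: the decay as $t\to 0$ handles the integrability at the origin, while near infinity one uses $\Re a<Q$ so that $t^{\Re a/\nu-1-Q/\nu}$ is integrable on $[1,\infty)$. Differentiation under the integral sign with the analogous bounds on $X^\alpha h_t$ shows $\cI_a$ is smooth on $G\setminus\{0\}$, and the change of variable $t\mapsto r^\nu t$ together with \eqref{eq_heat_kernel_homogeneity} gives $(a-Q)$-homogeneity; thus $\cI_a$ is a kernel of type $a$ in the sense of Definition~\ref{def_kernel_type_nu}. For the operator identity, I first take $\phi\in\cR_p(\cS(G))$, which lies in $\range(\cR_p)$, and apply Theorem~\ref{thm_fractional_power}~\eqref{item_thm_fractional_power_id_L_Gamma} with $0<\Re(a/\nu)<1$: using $e^{-t\cR_p}\phi=\phi*h_t$ and Fubini (justified by integrability), the integral representation yields $\cR_p^{-a/\nu}\phi=\phi*\cI_a$. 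By Theorem~\ref{thm_op_type_Lp_bdd}, $\phi\mapsto\phi*\cI_a$ is bounded $L^q\to L^p$, and $\cR_p^{-a/\nu}$ is closed (Theorem~\ref{thm_fractional_power}); density of $\cR_p(\cS(G))$ in $L^p(G)$ from Lemma~\ref{lem_cRN(cSG)} then extends the identity to all $\phi\in L^q\cap L^p$. A holomorphy-in-$a$ argument along the strip $0<\Re a<Q$, as in the proof of part~\eqref{item_thm_fractional_power_cRitau} of Theorem~\ref{thm_fractional_power}, covers the remaining range $\Re(a/\nu)\geq 1$.

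For part (ii), the exponential factor $e^{-t}$ in the definition of $\cB_a$ provides decay at infinity that replaces the constraint $\Re a<Q$, so absolute convergence and smoothness on $G\setminus\{0\}$ hold for every $a\in\bC_+$ by the same heat kernel bounds. To prove integrability, I apply Minkowski's integral inequality together with $\|h_t\|_1=1$ from \eqref{eq_heat_kernel_int}:
$$
\|\cB_a\|_1\leq \frac{1}{|\Gamma(a/\nu)|}\int_0^\infty t^{\Re a/\nu-1}e^{-t}\,dt=\frac{\Gamma(\Re a/\nu)}{|\Gamma(a/\nu)|}<\infty.
$$
For $\Re a>Q/2$, the homogeneity relation \eqref{eq_heat_kernel_homogeneity-1} gives $\|h_t\|_2=t^{-Q/(2\nu)}\|h_1\|_2$, and Minkowski again yields $\cB_a\in L^2(G)$ since $t^{\Re a/\nu-1-Q/(2\nu)}e^{-t}$ is integrable. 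For the convolution representation, I invoke Theorem~\ref{thm_fractional_power}~\eqref{item_thm_fractional_power_id_I+L_Gamma}, which gives $(\id+\cR_p)^{-a/\nu}\phi=\frac{1}{\Gamma(a/\nu)}\int_0^\infty t^{a/\nu-1}e^{-t(\id+\cR_p)}\phi\,dt$ in absolute convergence; writing $e^{-t(\id+\cR_p)}\phi=e^{-t}\phi*h_t$ and applying Fubini gives $(\id+\cR_p)^{-a/\nu}\phi=\phi*\cB_a$, and boundedness on $L^p$ or $C_o(G)$ follows from Young's inequality \eqref{eq_Young_ineq} and $\cB_a\in L^1(G)$.

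The semigroup identity $\cB_a*\cB_b=\cB_{a+b}$ follows by transport from the operator identity $(\id+\cR_p)^{-a/\nu}(\id+\cR_p)^{-b/\nu}=(\id+\cR_p)^{-(a+b)/\nu}$, which is part of Theorem~\ref{thm_fractional_power}~\eqref{item_thm_fractional_power_common_cR_id+cR}: for any $\phi\in\cS(G)$, applying both sides gives $\phi*(\cB_a*\cB_b)=\phi*\cB_{a+b}$ in $L^1(G)$, and picking $\phi$ to be an approximation of the identity (Lemma~\ref{lem_approximatation}) identifies the two $L^1$ kernels. The main technical point requiring care is the Fubini swap for non-compactly-supported $\phi$ in part (i), where one must verify $\int_0^\infty t^{\Re a/\nu-1}\|\phi*h_t\|_p\,dt<\infty$; this is where splitting the integral at $t=1$ and using $\|\phi*h_t\|_p\leq\|h_t\|_r\|\phi\|_q$ with Young's inequality (with the appropriate $r$ determined by the dilation estimate $\|h_t\|_r=t^{-Q(1-1/r)/\nu}\|h_1\|_r$) becomes essential.
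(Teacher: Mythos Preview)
Your approach is essentially that of the paper, and the argument is correct. One genuine difference worth noting: for the square-integrability of $\cB_a$ when $\Re a>Q/2$, the paper carries out an explicit computation of $\int_G|\cB_a|^2$ by expanding the square, using $\int_G h_t(x)\bar h_s(x)\,dx=h_t*h_s(0)=(t+s)^{-Q/\nu}h_1(0)$ via \eqref{eq_heat_kernel_hths_ht+s} and \eqref{eq_homogeneity_sym+pos}, and then reducing to a Beta-type integral through the change of variables $u=s+t$, $s'=s/u$. Your Minkowski argument, $\|\cB_a\|_2\le|\Gamma(a/\nu)|^{-1}\int_0^\infty t^{\Re a/\nu-1-Q/(2\nu)}e^{-t}\,dt\cdot\|h_1\|_2$, is shorter and yields the same threshold $\Re a>Q/2$; the paper's route has the slight advantage of giving an exact formula for the $L^2$-norm, but yours is entirely adequate for the stated claim.

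One small correction: you write $\|h_t\|_1=1$ citing \eqref{eq_heat_kernel_int}, but that equation only gives $\int_G h_t=1$. For a general positive Rockland operator the heat kernel need not be non-negative (cf.\ Remark~\ref{rem_proof_thm_heat_kernel_schwartz}), so one should instead use $\|h_t\|_1=\|h_1\|_1$ from the homogeneity \eqref{eq_heat_kernel_homogeneity-1}. This does not affect the argument---the integrability estimate becomes $\|\cB_a\|_1\le\|h_1\|_1\,\Gamma(\Re a/\nu)/|\Gamma(a/\nu)|$---and indeed the paper's own proof contains the same slip.
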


\begin{proof}[Proof of Corollary \ref{cor_Riesz_Bessel}]
The absolute convergence and 
the smoothness of $\cI_a$ and $\cB_a$ follow
from the estimates in \eqref{eq_control_heat_kernel_xfixed0}.

For the homogeneity of $\cI_a$, we use \eqref{eq_heat_kernel_homogeneity} and the change of variable $s=r^{-\nu} t$, to get
\begin{eqnarray*}
\cI_a(rx)
&=&\frac 1{\Gamma(a/\nu)}
\int_0^\infty t^{\frac{a}{\nu} -1} h_t(rx) dt
\\
&=&
\frac 1{\Gamma(a/\nu)}
\int_0^\infty (r^\nu s)^{\frac{a}{\nu} -1} r^{-Q} h_s(x) r^\nu ds
=
r^{a-Q} \cI_a(x)
.
\end{eqnarray*}

By Theorem \ref{thm_op_type_Lp_bdd},
the operator $\cS(G)\ni \phi\mapsto \phi*\cI_a$ 
is homogeneous of degree $-a$, and admits a bounded extension $L^q(G)\to L^p(G)$ 
when $\frac 1p-\frac1q=\frac {\Re (a)} Q$.
The rest of Part (i) follows from 
 Theorem~\ref{thm_fractional_power}
 together with Lemma \ref{lem_cRN(cSG)}.

By Theorem~\ref{thm_heat_kernel}, $\int_G |h_t|=1$ for all $t>0$,
so
\begin{equation}
\label{eq_L1norm_cBa}
\int_G |\cB_a(x)|dx 
\leq
\frac 1{|\Gamma(\frac a\nu)|} \int_0^\infty  t^{\frac{\Re a} {\nu} -1} e^{-t} 
\int_G |h_t(x)|dx \, dt
=
\frac {\Gamma(\frac {\Re a}\nu)}
{|\Gamma(\frac a\nu)|}
,
\end{equation}
and $\cB_a$ is integrable.
By Theorem~\ref{thm_fractional_power} Part
\eqref{item_thm_fractional_power_id_I+L_Gamma},
the integrable function $\cB_a$ is the convolution kernel of $(\id+\cR_p)^{-a/\nu}$.

Let us show the square integrability of $\cB_a$.
We assume $\Re a>0$.
We compute for any $R>0$:
\begin{eqnarray*}
&&\Gamma(a/\nu )^2\int_{|x|<R} |\cB_a(x)|^2 dx
=
\Gamma(a/\nu )^2\int_{|x|<R} \cB_a(x) \overline{\cB_a(x)} dx
\\
&&\quad=\int_{|x|<R}
\int_0^\infty t^{\frac a \nu -1} e^{-t} h_t(x) dt
\int_0^\infty  s^{\frac a \nu -1} e^{-s}  \bar h_s(x) ds
\,
dx
\\
&&\quad=
\int_0^\infty 
\int_0^\infty
(st)^{\frac a \nu -1} e^{-(t+s)} 
\int_{|x|<R} h_t(x)  \bar h_s(x) 
dx \, dt ds
.
\end{eqnarray*}
From the properties of the heat kernel 
(see \eqref{eq_homogeneity_sym+pos} and \eqref{eq_heat_kernel_hths_ht+s})
we see that
\begin{eqnarray*}
\int_{|x|<R} \!\!\!\!\!\!
h_t(x)  \bar h_s(x)dx
=
\int_{|x|<R} \!\!\!\!\!\!
h_t(x)  h_s(x^{-1})dx
\underset{R\to\infty}\longrightarrow
h_t*h_s(0),\\
\mbox{and}\
h_t*h_s(0)=h_{t+s}(0)=(t+s)^{-\frac Q\nu} h_1(0)
.
\end{eqnarray*}
Therefore,
\begin{eqnarray*}
&&\int_G |\cB_a(x)|^2 dx
=
\frac {h_1(0)}{\Gamma(a/\nu)^2} 
\int_0^\infty 
\int_0^\infty
(st)^{\frac a \nu -1} e^{-(t+s)} 
(t+s)^{-\frac Q \nu}
 dt ds
\\
&&\quad=
\frac {h_1(0)}{\Gamma(a/\nu)^2} 
\int_{s'=0}^1  \left(s'(1-s')\right)^{\frac a \nu -1} ds'
\int_{u=0}^\infty e^{-u} u^{2(\frac a \nu -1) - \frac Q \nu+1} du
,
\end{eqnarray*}
after the change of variables $u=s+t$ and $s'=s/u$.
The integrals over $s'$ and $u$ converge when $\Re a>Q/2$.
Thus $\cB_a$ is square integrable under this condition.
The rest of the proof of Corollary \ref{cor_Riesz_Bessel}
follows easily from the properties of 
the fractional powers of $\id+\cR$.
\end{proof}

\begin{corollary}
\label{cor_Riesz_Bessel_opnorm_power_I+R}
We keep the notation of Corollary \ref{cor_Riesz_Bessel}.
For any $a\in \bC_+$,
the operator norm of $(\id+\cR_p)^{-\frac a {\nu_{\cR}}}$
on $L^p(G)$ if $p\in [1,\infty)$ or $C_o(G)$ if $p=\infty_o$
is bounded by
$$
\|(\id+\cR_p)^{-\frac a {\nu_{\cR}}}\|_{\sL(L^{p})}
\leq \|\cB_{a}\|_1 \leq 
\Gamma\left(\frac{-\Re a}{\nu_{\cR}}\right)
\left|\Gamma\left(-\frac a{\nu_{\cR}}\right)\right|^{-1}.
$$

For any (fixed) $a\leq 0$ and $p\in (1,\infty)$, 
the following quantity is finite:
$$
\sup_{y\in \bR} e^{- 3|y|} \ln
\|(\id+\cR_{p})^{\frac{-a+iy }{\nu_{\cR}}}\|_{\sL(L^{p})}.
$$
\end{corollary}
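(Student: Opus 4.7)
For the first inequality, Corollary \ref{cor_Riesz_Bessel}(ii) identifies $(\id+\cR_p)^{-a/\nu_\cR}$ with right-convolution by the integrable kernel $\cB_a$. The Young convolution inequality \eqref{eq_Young_ineq} (taking $q=1$, $r=p$) then yields $\|(\id+\cR_p)^{-a/\nu_\cR}\|_{\sL(L^p)} \leq \|\cB_a\|_1$, while the explicit pointwise bound on $\|\cB_a\|_1$ is exactly \eqref{eq_L1norm_cBa}, already computed inside the proof of Corollary \ref{cor_Riesz_Bessel}. No further work is needed for the first claim.

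For the second part, the plan is to split the complex exponent into a ``Bessel part'' and a purely imaginary part. Applying the product law of Theorem \ref{thm_fractional_power}\eqref{item_thm_fractional_power_common_cR_id+cR} with $z_1 = -a/\nu_\cR$ and $z_2 = iy/\nu_\cR$, and noting that both $(\id+\cR_p)^{-a/\nu_\cR}$ (by the first part of the present corollary) and $(\id+\cR_p)^{iy/\nu_\cR}$ (by Theorem \ref{thm_fractional_power}\eqref{item_thm_fractional_power_(id+cR)itau}) are defined and bounded on all of $L^p(G)$, one can upgrade the a priori operator inclusion to the equality of bounded operators
\[
(\id+\cR_p)^{(-a+iy)/\nu_\cR} = (\id+\cR_p)^{-a/\nu_\cR}\,(\id+\cR_p)^{iy/\nu_\cR}.
\]
Submultiplicativity of the operator norm, combined with the first-part bound and the exponential estimate $\|(\id+\cR_p)^{iy/\nu_\cR}\|_{\sL(L^p)} \leq C e^{\theta|y|/\nu_\cR}$ from Theorem \ref{thm_fractional_power}\eqref{item_thm_fractional_power_(id+cR)itau}, then gives
\[
\|(\id+\cR_p)^{(-a+iy)/\nu_\cR}\|_{\sL(L^p)} \leq C\|\cB_a\|_1 \, e^{\theta|y|/\nu_\cR}.
\]
Taking logarithms and multiplying by $e^{-3|y|}$ shows that the quantity inside the supremum is dominated by $e^{-3|y|}(C' + \theta|y|/\nu_\cR)$, whose supremum over $y\in\bR$ is finite (indeed the expression vanishes as $|y|\to\infty$). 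This is the claimed finiteness.

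The only mildly delicate point is justifying the factorization as an equality rather than a mere inclusion: Theorem \ref{thm_fractional_power}\eqref{item_thm_fractional_power_common_cR_id+cR} gives only $(\id+\cR_p)^{-a/\nu_\cR}(\id+\cR_p)^{iy/\nu_\cR} \subset (\id+\cR_p)^{(-a+iy)/\nu_\cR}$ in the graph sense, and one must invoke the boundedness of both factors on $L^p(G)$ to conclude that the two sides agree as bounded operators. Once this is verified, the argument reduces to operator-norm submultiplicativity, the first-part estimate, and the bounds on purely imaginary powers already established in Theorem \ref{thm_fractional_power}.
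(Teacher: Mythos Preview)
Your argument for the first inequality is identical to the paper's: convolution by $\cB_a$ plus the $L^1$-estimate \eqref{eq_L1norm_cBa}.

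For the second assertion your route differs from the paper's. The paper does \emph{not} factor the operator; instead it applies the first part of the corollary directly to the complex parameter $a-iy\in\bC_+$ (for $a>0$), obtaining
\[
\|(\id+\cR_p)^{(-a+iy)/\nu_\cR}\|_{\sL(L^p)}\leq \frac{\Gamma(a/\nu_\cR)}{|\Gamma((a-iy)/\nu_\cR)|},
\]
and then invokes Stirling's asymptotics for $|\Gamma(x+iy)|$ as $|y|\to\infty$ to see that the logarithm grows at most linearly in $|y|$, so that $e^{-3|y|}\ln(\cdot)$ stays bounded. The case $a=0$ is handled separately by quoting the exponential bound of Theorem \ref{thm_fractional_power}\eqref{item_thm_fractional_power_(id+cR)itau}. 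Your factorisation $(\id+\cR_p)^{-a/\nu_\cR}(\id+\cR_p)^{iy/\nu_\cR}$ avoids Stirling entirely and treats $a>0$ and $a=0$ uniformly, at the price of having to justify the operator identity; you do this correctly via the closure statement in Theorem \ref{thm_fractional_power}\eqref{item_thm_fractional_power_common_cR_id+cR} (for $p\neq 1$ the closure of the product is the sum of exponents, and a product of bounded operators is already closed). Both approaches are valid; yours is arguably cleaner here since the needed exponential bound on imaginary powers has already been recorded, whereas the paper's approach keeps the estimate entirely at the level of the explicit Gamma-function bound on $\|\cB_a\|_1$.
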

\begin{proof}[Proof of Corollary \ref{cor_Riesz_Bessel_opnorm_power_I+R}]
The first part is a direct consequence of Corollary \ref{cor_Riesz_Bessel} (ii)
and its proof.
If $a>0$, the second part follows from the first 
 together with Sterling's estimates.
If $a=0$, it is a consequence of the exponential bounds for the operator norms obtained in   Theorem \ref{thm_fractional_power}
\eqref{item_thm_fractional_power_(id+cR)itau}.
\end{proof}

We now state the following technical lemma
and its corollaries
which will be useful in the sequel.

\begin{lemma}
\label{lem_cB_S}
We keep the notation of Corollary \ref{cor_Riesz_Bessel}.
\begin{itemize}
\item [(i)]
For any $\phi\in \cS(G)$ and $a\in \bC_+$, 
the function $\phi*\cB_a$ is Schwartz.
\item[(ii)]
Let  $a\in \bC$ and  $\phi\in \cS(G)$.
Then 
 $(\id +\cR_p)^a \phi$ does not depend on $p\in [1,\infty)\cup\{\infty_o\}$.
If $a\in \bN$,  $(\id +\cR_p)^a\phi$ coincides with $(\id+\cR)^a\phi$.
If $a\in \bC_+$,
we have
\begin{equation}
\label{eq_I+cR_phi*cB}
(\id+\cR_p)^a \left(\phi*\cB_{a\nu}\right) =
\left((\id+\cR_p)^a \phi\right)*\cB_{a\nu}
= \phi 
\qquad
(p\in [1,\infty)\cup\{\infty_o\}).
\end{equation}
\item[(iii)]
For any $N\in \bN$, $(\id+\cR)^N(\cS(G))=\cS(G)$.
\end{itemize}
\end{lemma}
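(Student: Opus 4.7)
The plan is to handle the three parts in order, using the integral representation of $\cB_{a}$ from Corollary~\ref{cor_Riesz_Bessel}(ii), the functional calculus of fractional powers of $\id+\cR_{p}$ from Theorem~\ref{thm_fractional_power}, and the fact that $\cR$ is a left-invariant differential operator preserving $\cS(G)$.

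For Part~(i), I start from
\[
\phi * \cB_{a} \;=\; \frac{1}{\Gamma(a/\nu)}\int_{0}^{\infty} t^{a/\nu-1} e^{-t}\, (\phi * h_{t})\, dt,
\]
and show that this integral converges in every Schwartz seminorm. On $(0,1]$, writing $h_{t}$ as a dilation of $h_{1}$ and applying Lemma~\ref{lem_approximatation} to $h_{1}\in L^{1}(G)$ gives $\phi * h_{t}\to\phi$ in $\cS(G)$ as $t\to 0$, hence uniform bounds on every Schwartz seminorm over $(0,1]$; the factor $t^{\Re a/\nu-1}$ is integrable near $0$ because $\Re a>0$. On $[1,\infty)$, for each seminorm I use left-invariance, $X^{\alpha}(\phi*h_{t})=\phi*X^{\alpha}h_{t}$, then expand $(1+|x|)^{N}$ via the quasi-triangle inequality of Proposition~\ref{prop_homogeneous_pseudo_norm}(1) to bound
\[
(1+|x|)^{N}|X^{\alpha}(\phi*h_{t})(x)|\;\leq\; C\,\|(1+|\cdot|)^{N}\phi\|_{\infty}\int(1+|z|)^{N}|X^{\alpha}h_{t}(z)|\,dz,
\]
and the rescaling $h_{t}(z)=t^{-Q/\nu}h_{1}(t^{-1/\nu}z)$ together with the Schwartz bounds on $h_1$ give polynomial growth in $t^{1/\nu}$ as $t\to\infty$, which the factor $e^{-t}$ absorbs. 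Hence the integral converges in $\cS(G)$, proving $\phi*\cB_{a}\in\cS(G)$.

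For Part~(ii), the independence in $p$ is obtained by case-splitting. For $a=n\in\bN_{0}$, Theorem~\ref{thm_cRp}(iv) together with $\cS(G)\subset\dom(\cR_{p})$ (Theorem~\ref{thm_fractional_power}(\ref{item_thm_fractional_domain_id+L_L})) gives $(\id+\cR_{p})^{n}\phi=(\id+\cR)^{n}\phi$, which is the coincidence claim. For $\Re a<0$, Corollary~\ref{cor_Riesz_Bessel}(ii) identifies $(\id+\cR_{p})^{a}$ with right-convolution by the $p$-independent kernel $\cB_{-a\nu}$. For a general $a\in\bC$, pick $n\in\bN$ with $n>\Re a$ and use the semigroup containment $\cA_{p}^{a-n}\cA_{p}^{n}\subset\cA_{p}^{a}$ of Theorem~\ref{thm_fractional_power}(\ref{item_thm_fractional_power_common_cR_id+cR}) together with the previous two cases to write $(\id+\cR_{p})^{a}\phi=((\id+\cR)^{n}\phi)*\cB_{(n-a)\nu}$, which is manifestly $p$-independent. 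The identities in~\eqref{eq_I+cR_phi*cB} then follow from the containment $\cA_{p}^{-a}\cA_{p}^{a}\subset\cA_{p}^{0}=\id$ applied to $\phi\in\cS(G)\subset\dom((\id+\cR_{p})^{a})$, yielding $((\id+\cR_{p})^{a}\phi)*\cB_{a\nu}=\phi$, and applied to $\phi*\cB_{a\nu}$, which by Part~(i) is in $\cS(G)\subset\dom((\id+\cR_{p})^{a})$, yielding $(\id+\cR_{p})^{a}(\phi*\cB_{a\nu})=\phi$.

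For Part~(iii), one inclusion is immediate since $(\id+\cR)^{N}$ is a left-invariant differential operator and hence maps $\cS(G)$ into $\cS(G)$. Conversely, given $\psi\in\cS(G)$, set $\phi:=\psi*\cB_{N\nu}$, which lies in $\cS(G)$ by Part~(i); then~\eqref{eq_I+cR_phi*cB} from Part~(ii) with $a=N$ gives $(\id+\cR)^{N}\phi=\psi$, establishing the surjectivity. The main technical obstacle is the Schwartz-seminorm control in Part~(i); the estimates for $t$ near infinity are straightforward due to the exponential weight, but a priori the $t\to 0$ regime requires either Lemma~\ref{lem_approximatation} as above or direct estimates using $h_{t}=h_{t/2}*h_{t/2}$ to move smoothness onto one factor, and one must verify that the chosen bookkeeping is uniform enough to commute the seminorm with the integral over $t$.
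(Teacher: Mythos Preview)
Your proof is correct, and your handling of Parts~(ii) and~(iii) is essentially the same as the paper's, though organised differently. The genuinely different choices are in Part~(i) and in the $p$-independence step of Part~(ii).

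For Part~(i), the paper takes a shorter route: rather than splitting the $t$-integral, it shows directly that $(1+|\cdot|)^{N}\cB_{a}\in L^{1}(G)$ for every $N$ (by pushing $|x|^{N}$ inside the defining integral and using homogeneity of $h_{t}$), and then obtains the Schwartz bound from a single weighted Young-type estimate,
\[
(1+|x|)^{N}\big|\tilde X^{\alpha}(\phi*\cB_{a})(x)\big|\leq C_{o}^{N}\,\big\|(1+|\cdot|)^{N}\tilde X^{\alpha}\phi\big\|_{\infty}\,\big\|(1+|\cdot|)^{N}\cB_{a}\big\|_{L^{1}}.
\]
Your time-splitting argument is correct but requires more bookkeeping (the uniform Schwartz-seminorm control on $(0,1]$ via Lemma~\ref{lem_approximatation}, the polynomial-in-$t^{1/\nu}$ bounds for large $t$). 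The paper's approach isolates a reusable fact about $\cB_{a}$ itself.

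For the $p$-independence in Part~(ii), the paper instead argues by analytic continuation: once Part~(iii) gives $\cS(G)\subset\dom[(\id+\cR_{p})^{N}]\cap\range[(\id+\cR_{p})^{N}]$, the map $a\mapsto(\id+\cR_{p})^{a}\phi$ is holomorphic on the strip $\{|\Re a|<N\}$, and since it agrees with the $p$-independent $\phi*\cB_{-a\nu}$ for $\Re a<0$, it is $p$-independent everywhere. Your algebraic factorisation $(\id+\cR_{p})^{a}\phi=\big((\id+\cR)^{n}\phi\big)*\cB_{(n-a)\nu}$ via the containment $\cA_{p}^{a-n}\cA_{p}^{n}\subset\cA_{p}^{a}$ is more elementary and avoids the holomorphy machinery; the paper's analyticity argument is slightly more structural but relies on Part~(iii) being established first, which makes the order of the argument a bit circular in presentation (though not in logic).
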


\begin{proof}
Let $|\cdot|$ be a homogeneous pseudo-norm on $G$ and $N\in \bN$.
We see that
$$
\int_G|x|^N |\cB_a(x)|dx 
\leq
\frac 1{|\Gamma(\frac a\nu)|} \int_0^\infty  t^{\frac{\Re a} {\nu} -1} e^{-t} 
\int_G|x|^N  |h_t(x)|dx \, dt
,
$$
and using
the homogeneity of the heat kernel (see \eqref{eq_heat_kernel_homogeneity-1})
and the change of variables $y=t^{-\frac 1\nu}x$, we get
$$
\int_G|x|^N  |h_t(x)|dx
=
\int_G|t^{\frac 1\nu } y|^N  |h_1(y)|dy
=
c_N t^{\frac N\nu } 
,
$$
where $c_N=\| |y|^N h_1(y)\|_{L^1(dy)}$ is a finite constant 
since $h_1\in \cS(G)$.
Thus,
$$
\int_G|x|^N |\cB_a(x)|dx 
\leq
\frac {c_N}
{|\Gamma(\frac a\nu)|} \int_0^\infty  t^{\frac{\Re a} {\nu} -1 +\frac N\nu} e^{-t}  dt <\infty ,
$$
and $x\mapsto |x|^N \cB_a(x)$ is integrable.

Let $C_o\geq 1$ denote the constant in the triangle inequality for $|\cdot|$
(see Proposition~\ref{prop_homogeneous_pseudo_norm}).
Let also $\phi\in \cS(G)$.
We have for any  $N\in \bN$ and $\alpha\in \bN_0^n$:
\begin{eqnarray*}
&&(1+|x|)^N 
\left|\tilde X\left[ \phi*\cB_a\right](x)\right|
=
(1+|x|)^N 
\left|\tilde X \phi*\cB_a(x)\right|
\\&&\qquad\leq
(1+|x|)^N 
\left|\tilde X \phi\right|*\left|\cB_a\right|(x)
\\
&&\qquad\leq
C_o^{N}
\left|(1+|\cdot|)^N\tilde X \phi\right|*
\left|(1+|\cdot|)^N\cB_a(x)\right|(x)
\\
&&\qquad\leq
C_o^{N}
\left\|(1+|\cdot|)^N\tilde X \phi\right\|_\infty
\left\|(1+|\cdot|)^N\cB_a\right\|_{L^1(G)}
.
\end{eqnarray*}
This shows that that $\phi*\cB_a\in \cS(G)$
 (for a description of the Schwartz class, see 
 \cite[Chapter 1 D]{folland+stein_bk82}
 and Part (i) is proved.

Part (ii) follows easily from 
Theorem~\ref{thm_fractional_power} and Corollary \ref{cor_Riesz_Bessel}.

Let us prove Part (iii).
By Theorem~\ref{thm_cRp} (iv), we have the inclusion 
$(\id+\cR)^N(\cS(G))\subset\cS(G)$.
The reverse inclusion $\cS(G)\subset (\id+\cR)^N(\cS(G))$
follows from  \eqref{eq_I+cR_phi*cB}
and  Theorem~\ref{thm_cRp} (iv).
So for any $N\in \bN$, $\cS(G)$ is included in  
$\dom\left[(\id+\cR_p)^N\right]\cap \range \left[(\id+\cR_p)^N\right]$
and we can apply 
the analyticity results of Theorem~\ref{thm_fractional_power}:
 the function
$a \mapsto (\id+\cR_p)^a \phi$ is holomorphic 
in $\{a\in \bC:\, -N < \Re a< N\}$.
We observe that by Corollary \ref{cor_Riesz_Bessel} (ii), 
if $-N<\Re a<0$,
all the functions 
$ (\id+\cR_p)^a \phi$ coincide with $\phi*\cB_{a\nu}$ 
 for any $p\in [1,\infty)\cup\{\infty_o\}$.
This shows that for each $a\in \bC$ fixed, 
$(\id +\cR_p)^a \phi$ is independent of~$p$.
This concludes the proof of Lemma \ref{lem_cB_S}.
\end{proof}

 \section{Sobolev spaces on graded groups}
 \label{SEC:Sobolev}
 
  In this section 
 we define the Sobolev spaces 
 associated to a positive Rockland operator $\cR$
 and  show that they satisfy similar properties to the Euclidean Sobolev spaces.
 We will show that the constructed spaces are actually independent of
 the choice of a positive Rockland operator $\cR$ on a graded group with which
 we start our construction.

 \subsection{Definition and first properties of Sobolev spaces}

We first need the following lemma:
\begin{lemma}
\label{lem_pre_sobolev}
We keep the notation of Theorem~\ref{thm_fractional_power}.
For any $s\in \bR$ and $p\in [1,\infty)\cup\{\infty_o\}$,
the domain of the operator $(\id+\cR_p)^{\frac s\nu}$ 
contains $\cS(G)$,
and the map
$$
f\longmapsto
\|(\id+\cR_p)^{\frac s\nu} f\|_{L^p(G)}
$$
defines a norm on $\cS(G)$. We denote it by
$$\|f\|_{L^{p}_{s}(G)}:=\|(\id+\cR_p)^{\frac s\nu} f\|_{L^p(G)}.$$
Moreover,
 any sequence in $\cS(G)$ which is Cauchy for $\|\cdot\|_{L^p_s(G)}$ is convergent in $\cS'(G)$.
\end{lemma}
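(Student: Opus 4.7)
The first two assertions are essentially bookkeeping. For $s\geq 0$, Theorem \ref{thm_fractional_power}\,\eqref{item_thm_fractional_domain_id+L_L} already gives $\cS(G)\subset\dom[(\id+\cR_p)^{s/\nu}]$; for $s<0$, the operator $(\id+\cR_p)^{s/\nu}$ is a bounded Bessel potential by Corollary \ref{cor_Riesz_Bessel}(ii), so its domain is the full ambient Banach space and in particular contains $\cS(G)$. Positive homogeneity and the triangle inequality for $\|\cdot\|_{L^p_s(G)}$ are inherited from linearity of $(\id+\cR_p)^{s/\nu}$ and the $L^p$-triangle inequality; definiteness follows from the injectivity of $(\id+\cR_p)^{s/\nu}$ stated in Theorem \ref{thm_fractional_power}\,\eqref{item_thm_fractional_power_common_cR_id+cR}.

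The substantive content is the final assertion. Take a Cauchy sequence $(f_n)\subset\cS(G)$ with respect to $\|\cdot\|_{L^p_s(G)}$ and set $g_n:=(\id+\cR_p)^{s/\nu}f_n$. By the definition of the norm, $(g_n)$ is Cauchy in the Banach space $\cX$, where $\cX=L^p(G)$ for $p\in[1,\infty)$ and $\cX=C_o(G)$ for $p=\infty_o$; let $g\in\cX$ be its limit. My strategy is to construct an element $T\in\cS'(G)$ such that $f_n\to T$ in $\cS'(G)$, by dualising the operator $(\id+\cR_p)^{s/\nu}$.

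For each $\phi\in\cS(G)$ set
\[
\psi_\phi := (\id+\bar\cR)^{-s/\nu}\phi,
\]
which by Lemma \ref{lem_cB_S}(ii) is well defined and independent of the auxiliary Banach space parameter. The key auxiliary claim is that $\psi_\phi\in\cS(G)$, with continuous dependence on $\phi$ in the Schwartz topology. For $s>0$ this reads $\psi_\phi=\phi*\bar\cB_s$ and is exactly Lemma \ref{lem_cB_S}(i). For $s<0$, pick an integer $N$ with $N\nu>|s|$ and use the semigroup law to write
\[
\psi_\phi=(\id+\bar\cR)^{|s|/\nu}\phi=(\id+\bar\cR)^{N}\bigl(\phi*\bar\cB_{N\nu-|s|}\bigr),
\]
and combine Lemma \ref{lem_cB_S}(i) with the fact that the integer power $(\id+\bar\cR)^{N}$ is a left-invariant differential operator, hence continuous on $\cS(G)$. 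Defining $T(\phi):=\langle g,\psi_\phi\rangle$ and applying H\"older's inequality gives $|T(\phi)|\leq\|g\|_{\cX}\|\psi_\phi\|_{\cX'}$, from which $T\in\cS'(G)$.

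To conclude, using the inversion identity $(\id+\bar\cR)^{s/\nu}\psi_\phi=\phi$ (which follows from Lemma \ref{lem_cB_S}(ii) and the semigroup law) together with the duality in Theorem \ref{thm_fractional_power}\,\eqref{item_thm_fractional_power_common_cR_id+cR},
\[
\langle f_n,\phi\rangle=\langle f_n,(\id+\bar\cR_{p'})^{s/\nu}\psi_\phi\rangle=\langle(\id+\cR_p)^{s/\nu}f_n,\psi_\phi\rangle=\langle g_n,\psi_\phi\rangle\xrightarrow[n\to\infty]{}\langle g,\psi_\phi\rangle=T(\phi),
\]
so $f_n\to T$ in $\cS'(G)$. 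The main obstacle is the sign-of-$s$ case analysis that establishes the Schwartz regularity and Schwartz-continuity of $\psi_\phi$; at the endpoints $p\in\{1,\infty_o\}$ one must also carefully interpret the duality formula using the restricted pairings with $C_o(G)$ or $L^1(G)$ stated in Theorem \ref{thm_cRp}(iii).
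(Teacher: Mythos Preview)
Your argument is correct, and for $s<0$ it is essentially the paper's argument written out more carefully: the paper also reduces $\langle f_n,\phi\rangle$ to a pairing of $g_n$ against a Schwartz function built from $\phi$ via the Bessel kernel, and then lets $g_n\to g$ in $L^p_0(G)$. Your factorisation $(\id+\bar\cR)^{|s|/\nu}=(\id+\bar\cR)^N\circ(\text{convolution with }\bar\cB_{N\nu-|s|})$ is exactly what is needed to see that this auxiliary function lies in $\cS(G)$.

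The organisational difference is in the case $s\ge 0$. There the paper avoids the dual test function $\psi_\phi$ altogether: since $(\id+\cR_p)^{-s/\nu}$ is bounded on $L^p_0(G)$, one has $\|f\|_{L^p}\le C\|f\|_{L^p_s}$, so a $\|\cdot\|_{L^p_s}$-Cauchy sequence is already Cauchy in $L^p$ (or $C_o(G)$) and hence convergent in $\cS'(G)$ with no further work. Your unified approach trades this one-line observation for the construction of $\psi_\phi$ in the easy direction ($\psi_\phi=\phi*\bar\cB_s$); in return you get a single argument valid for all $s$, an explicit description of the limiting distribution $T$, and you do not need to treat $s=0$, $s>0$, $s<0$ separately. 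One small remark: the duality identity $\langle (\id+\cR_p)^{s/\nu}f_n,\psi_\phi\rangle=\langle f_n,(\id+\bar\cR_{p'})^{s/\nu}\psi_\phi\rangle$ is stated in Theorem~\ref{thm_fractional_power}\,\eqref{item_thm_fractional_power_common_cR_id+cR} only for the operator itself, not for its fractional powers; but since both $f_n$ and $\psi_\phi$ are Schwartz, the identity follows immediately from your own decomposition into an integer power (integration by parts) composed with a Bessel convolution (formula \eqref{eq_int_f*gh_f} and $\tilde\cB_a=\bar\cB_a$).
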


We have allowed ourselves to write 
$\|\cdot\|_{L^\infty(G)}=\|\cdot\|_{L^{\infty_o}(G)}$
for the supremum norm.
We may also write $\|\cdot\|_\infty$ or $\|\cdot\|_{\infty_o}$.

\begin{proof}
The domain of $(\id+\cR_p)^{\frac s\nu}$ contains $\cS(G)$:
\begin{itemize}
\item 
by Theorem~\ref{thm_fractional_power} 
Part \eqref{item_thm_fractional_domain_id+L_L} for $s>0$, 
\item 
by Corollary \ref{cor_Riesz_Bessel} (ii) for $s<0$ and,
\item 
 for $s=0$, since $(\id+\cR_p)^{\frac s\nu}=\id$.
\end{itemize}
Since the operator $(\id+\cR_p)^{\frac s\nu}$ is linear,
it is easy to check that the map
$f\mapsto \|(\id+\cR_p)^{\frac s\nu} f\|_p$ 
is non-negative and satisfies the triangle inequality.
Since $(\id+\cR_p)^{s/\nu}$ is injective
by Theorem~\ref{thm_fractional_power}, Part \eqref{item_thm_fractional_power_common_cR_id+cR},
we have that
 $ \|f\|_{L^p_s(G)}=0$ implies $f=0$.

Clearly $\|\cdot\|_{L^p_0(G)}=\|\cdot\|_p$, so in the case of $s=0$ a Cauchy sequence 
of Schwartz functions converges in $L^p$-norm, thus also in $\cS'(G)$.

Let us assume $s>0$.
By Corollary \ref{cor_Riesz_Bessel} (ii),
 the operator $(\id+\cR_p)^{-\frac s\nu}$ is bounded on $L^p(G)$.
 Hence we have $\|\cdot\|_{L^p(G)} \leq C\|\cdot\|_{L^p_s(G)}$ on $\cS(G)$.
Consequently a $\|\cdot\|_{L^p_s(G)}$-Cauchy sequence 
of Schwartz functions converge in $L^p$-norm thus in $\cS'(G)$.

Now let us assume $s<0$.
Let $\{f_\ell\}_{\ell\in \bN}$ be a sequence of Schwartz functions  which is Cauchy for the norm $\|\cdot\|_{L^p_s(G)}$.
By \eqref{eq_I+cR_phi*cB}
we have 
$f_\ell=
\left((\id+\cR_p)^{\frac s\nu} f_\ell\right)*\cB_{s}$.
Furthermore, if $\phi\in \cS(G)$ then using \eqref{eq_int_f*gh_f} 
and \eqref{eq_homogeneity_sym+pos},
we have
\begin{equation}
\label{eq_pf_lem_pre_sobolev}
\int_G f_\ell(x) \phi(x) dx =
\int_G \left((\id+\cR_p)^{\frac s\nu} f_\ell\right)(x) \  \left(\phi* \cB_{s}\right) (x) \ dx
 . 
\end{equation}
By assumption the sequence
$\{(\id+\cR_p)^{\frac s\nu} f_\ell\}_{\ell\in \bN}$ is 
$\|\cdot\|_{L^p(G)}$-Cauchy thus convergent in $L^p(G)$. 
By Lemma~\ref{lem_cB_S},  $\phi*\cB_s\in \cS(G)$.
Therefore, the right hand-side of \eqref{eq_pf_lem_pre_sobolev} 
 is convergent as $\ell\to\infty$.
Hence the scalar sequence $\langle f_\ell, \phi\rangle$ converges for any $\phi\in \cS(G)$.
This shows that the sequence $\{f_\ell\}$ converges in $\cS'(G)$.
\end{proof} 
 
Lemma~\ref{lem_pre_sobolev} allows us to define the Sobolev spaces: 
 \begin{definition}
 \label{def_sobolev_space}
 \index{Sobolev space $L^p_s(G)$}
 Let $\cR$ be a positive Rockland operator on $G$.
We consider its $L^p$-analogue $\cR_p$ 
and the powers of $(\id+\cR_p)^a$
as defined in Theorems \ref{thm_cRp} and \ref{thm_fractional_power}.
Let $s\in\bR.$

If $p\in [1,\infty)$,
the \emph{Sobolev space} $L^p_{s,\cR}(G)$ 
is the subspace of $\cS'(G)$
obtained by completion 
of $\cS(G)$ 
with respect to the \emph{Sobolev norm}
$$
\|f\|_{L^p_{s,\cR} (G)}:=\|(\id+\cR_p)^{\frac s\nu} f\|_{L^p(G)}
,\quad f\in \cS(G) .
$$

If $p=\infty_o$, 
the \emph{Sobolev space} $L^{\infty_o}_{s,\cR}(G)$ 
is the subspace of $\cS'(G)$
obtained by completion 
of $\cS(G)$ 
with respect to the \emph{Sobolev norm}
$$
\|f\|_{L^{\infty_o}_{s,\cR}(G)}:=\|(\id+\cR_{\infty_o})^{\frac s\nu} f\|_{L^\infty(G)}
,\quad f\in \cS(G) .
$$

When the Rockland operator $\cR$ is fixed, we may allow ourselves to drop the index $\cR$ in $L^p_{s,\cR}(G)= L^p_s(G)$ to simplify the notation.
\end{definition}

We will see later that the Sobolev spaces
do not depend on the Rockland operator $\cR$,
see Theorem \ref{thm_Lps_indep_cR}.

 By construction
the Sobolev space $L^p_s(G)$ endowed with the Sobolev norm 
is a Banach space
which contains $\cS(G)$ as a dense subspace
and is included in $\cS'(G)$.
The Sobolev spaces share many properties with their Euclidean counterparts.

\begin{theorem}
\label{thm_sobolev_spaces}
 Let $\cR$ be a positive Rockland operator on $G$.
We consider the associated Sobolev spaces $L^p_s(G)$
for $p\in [1,\infty)\cup\{\infty_o\}$ and $s\in \bR$.

\begin{enumerate}
\item
\label{item_thm_sobolev_spaces_s=0}
If $s=0$, then
 $L^p_0(G)=L^p(G)$ for $p\in [1,\infty)$
with $\|\cdot\|_{L^p_0(G)}=\|\cdot\|_{L^p(G)}$,
 and 
 $L^{\infty_o}_0(G)=C_o(G)$ 
with $\|\cdot\|_{L^{\infty_o}_0(G)}=\|\cdot\|_{L^\infty(G)}$.
\item
\label{item_thm_sobolev_spaces_s>0}
 If $s>0$, then for any $a\in \bC$ with $\Re a=s$, we have
$$
L^p_s(G)=
\dom\left[ (\id+\cR_p)^{\frac a\nu}\right]=\dom(\cR_p^{\frac a\nu})
\subsetneq L^p(G),
$$ 
and 
the following norms are equivalent to $\|\cdot\|_{L^p_s(G)}$: 
$$
f\longmapsto \|f\|_{L^p(G)} +\|(\id+\cR_p)^{\frac s \nu} f\|_{L^p(G)}
,\ 
f\longmapsto \|f\|_{L^p(G)} +\|\cR_p^{\frac s\nu} f\|_{L^p(G)}
.
$$

\item 
\label{item_thm_sobolev_spaces_distrib}
Let $s\in \bR$ and $f\in \cS'(G)$.
\begin{itemize}
\item 
Given  $p\in (1,\infty)$, then
$f\in L^p_s(G)$ if and only if $(\id+\cR_p)^{s/\nu} f \in L^p(G)$ 
in the sense that 
the  linear mapping
$\cS(G)\ni \phi \mapsto \langle f, (\id+\bar \cR_{p'})^{s/\nu} \phi\rangle$
extends to a bounded functional on $L^{p'}(G)$ where 
$p'$ is the conjugate exponent of $p$.

\item 
$f\in L^1_s(G)$ 
if and only if 
 $(\id+\cR_1)^{s/\nu} f \in L^1(G)$ 
in the sense that
the  linear mapping
$\cS(G)\ni \phi \mapsto \langle f, (\id+\bar\cR_{\infty_o})^{s/\nu} \phi\rangle$
 extends to a bounded functional on  $C_o(G)$ 
 and is realised as a measure given by an integrable function.

\item 
$f\in L^{\infty_o}_s(G)$ 
if and only if $(\id+\cR_1)^{s/\nu} f \in C_o(G)$ 
in the sense that
the  linear mapping
$\cS(G)\ni \phi \mapsto \langle f, (\id+\bar\cR_1)^{s/\nu} \phi\rangle$
extends to a bounded functional on  $L^1(G)$ 
and is realised as integration against function in $C_o(G)$
\end{itemize}

\item 
\label{item_thm_sobolev_spaces_inclusions}
If $a,b\in \bR$ with $a< b$ and $p\in [1,\infty)\cup\{\infty_o\}$, 
then the following continuous strict inclusions hold
$$
\cS(G)\subsetneq L^p_b (G)\subsetneq L^p_a(G) \subsetneq \cS'(G)
,
$$
and an equivalent norm for $L^p_b(G)$ is
$$
L^p_b(G)\ni f \longmapsto \|f\|_{L^p_a(G)} + \|\cR ^{\frac {b-a} \nu} f\|_{L^p_a(G)}
.
$$

\end{enumerate}
\end{theorem}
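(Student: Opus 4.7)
The plan is to deduce everything from Part 2, with Part 1 essentially definitional, Part 3 obtained by duality, and Part 4 by operator-theoretic manipulations. Part 1 is immediate: $(\id+\cR_p)^0=\id$ reduces the Sobolev norm to the ambient norm, and $\cS(G)$ is dense in $L^p(G)$ for $p<\infty$ and in $C_o(G)$ for $p=\infty_o$, so the completions yield the claimed spaces.

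For Part 2 with $s>0$, I will identify $L^p_s(G)$ with $\dom[(\id+\cR_p)^{s/\nu}]$ equipped with its graph norm. Boundedness of $(\id+\cR_p)^{-s/\nu}$ from Corollary \ref{cor_Riesz_Bessel}(ii) gives $\|f\|_p\leq\|\cB_s\|_1\|f\|_{L^p_s(G)}$ on $\cS(G)$, so the Sobolev norm is equivalent to the graph norm there. The crux is density of $\cS(G)$ in the Banach space $\dom[(\id+\cR_p)^{s/\nu}]$: given $f$ in the domain, set $g:=(\id+\cR_p)^{s/\nu} f$, pick $g_n\in\cS(G)$ converging to $g$ in the ambient space, and define $f_n:=g_n*\cB_s$. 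Lemma \ref{lem_cB_S}(i) yields $f_n\in\cS(G)$; Lemma \ref{lem_cB_S}(ii) gives $(\id+\cR_p)^{s/\nu} f_n=g_n$; and boundedness of the Bessel convolution forces $f_n\to f$ in $L^p$. Hence the $\|\cdot\|_{L^p_s}$-completion of $\cS(G)$ is exactly the domain. The identity $\dom[(\id+\cR_p)^{s/\nu}]=\dom(\cR_p^{s/\nu})$ is Theorem \ref{thm_fractional_power}\eqref{item_thm_fractional_domain_id+L_L}, and equivalence of the three candidate norms follows by the open mapping theorem applied to these Banach norms on the common underlying space.

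For Part 3 with $p\in(1,\infty)$, the forward direction uses the duality $(\id+\cR_p)^*=(\id+\bar\cR_{p'})$ from Theorem \ref{thm_fractional_power}\eqref{item_thm_fractional_power_common_cR_id+cR}: for Schwartz approximants $f_n$ with $(\id+\cR_p)^{s/\nu}f_n\to g$ in $L^p$, one has $\langle f,(\id+\bar\cR_{p'})^{s/\nu}\phi\rangle=\langle g,\phi\rangle$, which extends boundedly to $L^{p'}$. Conversely, if the extension is represented by $g\in L^p$, set $f':=g*\cB_s$; by Part 2 and Lemma \ref{lem_cB_S}(ii), $f'\in L^p_s(G)$ with $(\id+\cR_p)^{s/\nu}f'=g$. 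Identification $f=f'$ in $\cS'$ follows by applying Lemma \ref{lem_cB_S}(ii) to $\bar\cR_{p'}$ to write each $\phi\in\cS(G)$ as $(\id+\bar\cR_{p'})^{s/\nu}(\phi*\bar\cB_s)$, where $\bar\cB_s=\tilde\cB_s$ by \eqref{eq_homogeneity_sym+pos}; then $\langle f,\phi\rangle=\langle g,\phi*\bar\cB_s\rangle=\langle g*\cB_s,\phi\rangle$ via \eqref{eq_int_f*gh_f}. The endpoint cases $p=1$ and $p=\infty_o$ go identically using the appropriate duality from Theorem \ref{thm_fractional_power}\eqref{item_thm_fractional_power_common_cR_id+cR}; the integrability (resp.\ continuity) of the representing function comes from $L^1$-Cauchy (resp.\ uniform) convergence of the approximants $g_n*\cB_s$.

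For Part 4, continuous inclusion $L^p_b\hookrightarrow L^p_a$ follows from the operator factorisation $(\id+\cR_p)^{a/\nu}=(\id+\cR_p)^{(a-b)/\nu}(\id+\cR_p)^{b/\nu}$ together with boundedness of $(\id+\cR_p)^{(a-b)/\nu}$ by Corollary \ref{cor_Riesz_Bessel}(ii); $\cS(G)\hookrightarrow L^p_b$ is a direct Schwartz estimate; and $L^p_a\hookrightarrow\cS'$ is Lemma \ref{lem_pre_sobolev}. Strictness $L^p_b\subsetneq L^p_a$: by unboundedness of $\cR_p^{(b-a)/\nu}$ (Theorem \ref{thm_fractional_power}\eqref{item_thm_fractional_domain_id+L_L}) there is $g\in L^p\setminus\dom(\cR_p^{(b-a)/\nu})$, and then $g*\cB_a\in L^p_a\setminus L^p_b$. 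Strictness $\cS(G)\subsetneq L^p_b$ then follows because any $f\in L^p_b\setminus L^p_{b+1}$ cannot be Schwartz, while strictness $L^p_a\subsetneq\cS'$ is immediate since $\cS'$ contains polynomials. The equivalent norm follows from the same factorisation, the commutativity of $\cR_p^{(b-a)/\nu}$ with $(\id+\cR_p)^{a/\nu}$ in Theorem \ref{thm_fractional_power}\eqref{item_thm_fractional_power_commute}, and Part 2 applied at exponent $b-a$ to $g:=(\id+\cR_p)^{a/\nu}f$. The main obstacle I anticipate is coordinating the three topologies ($L^p$, graph, $\cS'$) in the core-density step of Part 2 and the $f=g*\cB_s$ identification in Part 3, especially at the endpoints $p\in\{1,\infty_o\}$; after Part 2 is settled, the remaining items are largely bookkeeping via Theorem \ref{thm_fractional_power} and Lemma \ref{lem_cB_S}.
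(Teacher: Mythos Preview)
Your overall strategy matches the paper's, and in Part~2 you are in fact more careful: you give an explicit density argument for $\cS(G)$ in $\dom[(\id+\cR_p)^{s/\nu}]$ via $f_n=g_n*\cB_s$, whereas the paper simply asserts that $L^p_s(G)$ coincides with this domain. Part~1 and Part~4 are handled essentially as in the paper (same factorisation, same use of Theorem~\ref{thm_fractional_power}\eqref{item_thm_fractional_power_commute}), with somewhat more detail on the strictness claims.

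There is, however, a genuine gap in your treatment of Part~\eqref{item_thm_sobolev_spaces_distrib}. You run a single argument for all $s\in\bR$ that hinges on forming $f':=g*\cB_s$ and on writing $\phi=(\id+\bar\cR_{p'})^{s/\nu}(\phi*\bar\cB_s)$. But the Bessel kernel $\cB_a$ is only defined for $a\in\bC_+$ (Corollary~\ref{cor_Riesz_Bessel}(ii)), so neither $\cB_s$ nor $\bar\cB_s$ makes sense when $s<0$. For $s<0$ the roles are reversed: it is $(\id+\bar\cR_{p'})^{s/\nu}\phi=\phi*\bar\cB_{-s}$ that lands in $\cS(G)$ (with $-s>0$), and the converse direction cannot be obtained by convolving $g$ against a Bessel kernel. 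One would instead need $(\id+\cR_p)^{-s/\nu}$ to map $\cS(G)$ into $\cS(G)$ for positive non-integer exponents $-s/\nu$, and the lemmas you cite (Lemma~\ref{lem_cB_S}(ii),(iii)) do not give this. The paper avoids the issue by splitting into cases: for $s\geq 0$, Part~\eqref{item_thm_sobolev_spaces_distrib} is a direct consequence of the domain identification in Part~\eqref{item_thm_sobolev_spaces_s>0}; for $s<0$, it uses the observation that a sequence $\{f_\ell\}\subset\cS(G)$ is $\|\cdot\|_{L^p_s}$-Cauchy if and only if the associated functionals $\phi\mapsto\langle f_\ell,\phi*\bar\cB_{-s}\rangle$ converge in the dual of $L^{p'}_0(G)$, which sidesteps any need to invert the Bessel potential on Schwartz space. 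Your argument is fine for $s>0$; for $s<0$ you need to rework the converse along these lines.
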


From now on, we will often use the notation $L^p_0(G)$
since this allows us not to distinguish between the cases
$L^p_0(G)=L^p(G)$ when $p\in [1,\infty)$
and 
$L^p_0(G)=C_o(G)$ when $p=\infty_o$.

 \begin{proof}[Proof of  Theorem~\ref{thm_sobolev_spaces}]
 Part \eqref{item_thm_sobolev_spaces_s=0} is true since 
$(\id+\cR_p)^{\frac 0\nu}=\id$.
  
  Let us prove 
 Part \eqref{item_thm_sobolev_spaces_s>0}.  
So let $s>0$.
Clearly  $L^p_s(G)$ coincides with the domain 
of the unbounded operator $(\id+\cR_p)^{\frac s\nu}$
(see  Theorem~\ref{thm_fractional_power} \eqref{item_thm_fractional_domain_id+L_L}) hence
it is a proper subspace of $L^p(G)$.
As
 the operator $(\id+\cR_p)^{-\frac s\nu}$ is bounded on $L^p(G)$,
we have $\|\cdot\|_{L^p(G)} \leq C\|\cdot\|_{L^p_s(G)}$ on $L^p_s(G)$.
So $\|\cdot\|_{L^p(G)} +\|\cdot\|_{L^p_s(G)}$ 
is a norm on $L^p_s(G)$ which is equivalent to  the Sobolev norm.
By Theorem \ref{thm_fractional_power},
the operators $\cR_p^{\frac s\nu}$ and $(\id+\cR_p)^{\frac s\nu}$
 share the same domain.
 Hence  Part \eqref{item_thm_sobolev_spaces_s>0} follows from general functional analysis, especially the closed graph theorem.

Part \eqref{item_thm_sobolev_spaces_distrib}
follows from Part \eqref{item_thm_sobolev_spaces_s>0} in the case $s\geq 0$.
We now consider the case $s<0$.
By Lemma~\ref{lem_cB_S} and Corollary \ref{cor_Riesz_Bessel},
the mapping
$$
T_{s,p',f}:\cS(G)\ni \phi \longmapsto 
\langle f, (\id+\bar \cR_{p'})^{s/\nu} \phi \rangle
= 
\langle f, \phi*\bar \cB_{-s} \rangle
$$ 
is well defined for any $f\in \cS'(G)$.
If $T_{s,p',f}$ admits a bounded extension to a functional on $L^{p'}_0(G)$,
then we denote this extension $\tilde T_{s,p',f}$ and we have 
$\|\tilde T_{s,p',f}\|_{\sL (L^{p'})}= \|f\|_{L^p_s(G)}$.
This is certainly so if $f\in \cS(G)$.
The proof of Part \eqref{item_thm_sobolev_spaces_distrib}
follows from the following observation:
a sequence $\{f_\ell\}_{\ell\in \bN}$ of Schwartz functions
is convergent for the Sobolev norm $\|\cdot\|_{L^p_s(G)}$ if and only if 
 $\{\tilde T_{s,p',f_\ell}\}$ is convergent  in $L^{p'}_0(G)$.

Let us show Part \eqref{item_thm_sobolev_spaces_inclusions}.
Let $a\leq b$ and $p\in [1,\infty)\cup\{\infty_o\}$.
Theorem~\ref{thm_fractional_power} implies that 
  for any $f\in \cS(G)$ we have
$$
\|f\|_{L^p_b(G)}
\leq
 \|(\id+\cR_p)^{\frac {b-a}\nu}\|_{\sL (L^p_0)}\|f\|_{L^p_a}.
$$ By density of $\cS(G)$, this yields
the continuous inclusion $L^p_b\subset L^p_a$.
If $a<b$, we also have
$$
\|f\|_{L^p_b(G)}
=
 \|(\id+\cR_p)^{\frac a\nu} f\|_{L^p_{b-a}(G)}
\asymp
\|(\id+\cR_p)^{\frac a\nu} f\|_{L^p(G)}
+\|\cR_p^{\frac{b-a}\nu}(\id+\cR_p)^{\frac a\nu} f\|_{L^p(G)}
$$
by Part \eqref{item_thm_sobolev_spaces_s>0} above for any $f\in \cS(G)$.
By Theorem~\ref{thm_fractional_power} 
\eqref{item_thm_fractional_power_commute},
we can commute the operators 
$\cR_p^{\frac{b-a}\nu}$ and $(\id+\cR_p)^{\frac a\nu}$ in this last expression.
This shows that the Sobolev norm is equivalent to 
$ \|\cdot\|_{L^p_b(G)}
\asymp
\|\cdot\|_{L^p_a(G)}
+\|\cR_p^{\frac{b-a}\nu} \cdot\|_{L^p_a(G)}$.
Since the operator $\cR_p^{\frac{b-a}\nu}$ is unbounded, this also implies the strict inclusions
given in Part \eqref{item_thm_sobolev_spaces_inclusions}.
This concludes the proof of this part and of the whole theorem.
\end{proof}

Theorem~\ref{thm_sobolev_spaces} has the following corollaries.
The first two are easy consequences of Part \eqref{item_thm_sobolev_spaces_distrib} left to the reader.

\begin{corollary}
We keep the setting and notation of Theorem~\ref{thm_sobolev_spaces}.
Let $s<0$ and $p\in [1,\infty)\cup\{\infty_o\}$. 
Let $f\in \cS'(G)$.

The tempered distribution $f$ is in $L^p_s(G)$ if and only if 
the mapping $\phi\in \cS(G) \mapsto \langle f, \phi * \bar\cB_{-s}\rangle$ 
extends to a bounded linear functional on $L^{p'}_0(G)$
with the additional property  that
\begin{itemize}
\item 
for $p=1$,
this functional on $C_o(G)$ is realised as a measure given by an integrable function,
\item 
if $p=\infty_o$,
this functional on $L^1(G)$ is realised 
by integration against a function in $C_o(G)$.
 \end{itemize}
\end{corollary}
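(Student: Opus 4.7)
The plan is to recognise this corollary as a direct translation of Part~\eqref{item_thm_sobolev_spaces_distrib} of Theorem~\ref{thm_sobolev_spaces}, where the abstract operator $(\id+\bar\cR_q)^{s/\nu}$ appearing in the duality pairing is replaced by its concrete convolution realisation $\phi\mapsto \phi*\bar\cB_{-s}$. Once this identification is done, each of the three bullet points in Theorem~\ref{thm_sobolev_spaces}~\eqref{item_thm_sobolev_spaces_distrib} matches one of the three cases of the corollary, using the convention $L^{p'}_{0}(G)=L^{p'}(G)$ for $p'\in[1,\infty)$ and $L^{\infty_o}_{0}(G)=C_o(G)$ from Theorem~\ref{thm_sobolev_spaces}~\eqref{item_thm_sobolev_spaces_s=0}.

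The only technical point to check is the kernel identification. Since $\cR$ is a positive Rockland operator, so is $\bar\cR$, and by Theorem~\ref{thm_cRp}~(ii) its heat semigroup has convolution kernel $\bar h_t$. Writing $a:=-s>0$, which is real, Corollary~\ref{cor_Riesz_Bessel}~(ii) applied to $\bar\cR$ yields that, for any $q\in[1,\infty)\cup\{\infty_o\}$, the operator $(\id+\bar\cR_q)^{s/\nu}=(\id+\bar\cR_q)^{-a/\nu}$ is bounded with convolution kernel
$$
\frac{1}{\Gamma(a/\nu)}\int_0^\infty t^{a/\nu-1}e^{-t}\bar h_t(x)\,dt
=\overline{\cB_a(x)}=\bar\cB_{-s}(x),
$$
where we used that $\Gamma(a/\nu)\in\bR$ since $a/\nu$ is a positive real. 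Consequently, for every $\phi\in\cS(G)$ and every admissible $q$,
$$
(\id+\bar\cR_q)^{s/\nu}\phi=\phi*\bar\cB_{-s}.
$$

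With this identity in hand I would then match cases. For $p\in(1,\infty)$, apply Theorem~\ref{thm_sobolev_spaces}~\eqref{item_thm_sobolev_spaces_distrib} with $q=p'$: the condition that $\phi\mapsto \langle f,(\id+\bar\cR_{p'})^{s/\nu}\phi\rangle$ extends boundedly to $L^{p'}(G)=L^{p'}_{0}(G)$ is exactly the condition claimed in the corollary. For $p=1$ take $q=\infty_o$ so that $L^{\infty_o}_{0}(G)=C_o(G)$, and the additional realisation as an integrable function is precisely the supplementary assertion of the second bullet in Theorem~\ref{thm_sobolev_spaces}~\eqref{item_thm_sobolev_spaces_distrib}. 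For $p=\infty_o$ take $q=1$ so that $L^1_{0}(G)=L^1(G)$, and the realisation as integration against a function in $C_o(G)$ matches the third bullet. No step here is delicate: there is no genuine obstacle, the entire content of the argument lies in the kernel identification above, which in turn rests on $s$ being real and on the conjugation compatibility between $\cR$ and $\bar\cR$ already recorded in Theorem~\ref{thm_cRp}~(ii).
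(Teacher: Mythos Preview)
Your proposal is correct and follows exactly the route the paper intends: the corollary is stated as an ``easy consequence of Part~\eqref{item_thm_sobolev_spaces_distrib} left to the reader,'' and in the proof of that part the paper already records the identification $\langle f,(\id+\bar\cR_{p'})^{s/\nu}\phi\rangle=\langle f,\phi*\bar\cB_{-s}\rangle$ via Lemma~\ref{lem_cB_S} and Corollary~\ref{cor_Riesz_Bessel}. Your explicit verification that the Bessel kernel of $\bar\cR$ is $\bar\cB_{-s}$ (using that $-s>0$ is real so $\Gamma(-s/\nu)\in\bR$) is precisely the missing detail, and the case-matching is straightforward.
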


 \begin{corollary}
\label{cor_sobolev_spaces_duality}
 Let $\cR$ be a positive Rockland operator on a graded Lie group $G$.
We consider the associated Sobolev spaces $L^p_{s,\cR}(G)$.
If  $s\in \bR$ and $p\in (1,\infty)$,
the dual space of $L^p_{s,\cR}(G)$ is isomorphic to $L^{p'}_{-s,\bar\cR}(G)$ 
via the distributional duality,
where 
$p'$ is the conjugate exponent of $p$.
\end{corollary}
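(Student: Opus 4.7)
The plan is to leverage the fact that $(\id+\cR_p)^{s/\nu}$ is, by the very definition of the Sobolev norm, an isometric isomorphism $U: L^p_{s,\cR}(G)\to L^p(G)$ (its inverse being $(\id+\cR_p)^{-s/\nu}$ on $L^p(G)$). Combined with the adjoint relation $(\id+\cR_p)^{s/\nu}\leftrightarrow(\id+\bar\cR_{p'})^{s/\nu}$ from Theorem~\ref{thm_fractional_power}\eqref{item_thm_fractional_power_common_cR_id+cR}, this should turn the standard $L^p$-duality $(L^p(G))^*\cong L^{p'}(G)$ into the desired Sobolev duality. The surjectivity of $U$ follows because $U$ preserves and is invertible on $\cS(G)$ by Lemma~\ref{lem_cB_S}, so the range of $U$ is dense in $L^p(G)$ while being complete as the image of an isometry.

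Given $\Psi\in(L^p_{s,\cR}(G))^*$, composition with $U^{-1}$ yields $\tilde\Psi\in(L^p(G))^*$, and the standard duality produces $h\in L^{p'}(G)$ with $\tilde\Psi(g)=\int_G g\,h\,dx$ and $\|h\|_{L^{p'}}=\|\Psi\|$. For $\phi\in\cS(G)$ one has $(\id+\cR_p)^{s/\nu}\phi\in\cS(G)$ by Lemma~\ref{lem_cB_S}, so setting
$$\Psi(\phi)=\int_G\bigl((\id+\cR_p)^{s/\nu}\phi\bigr)\,h\,dx=\langle h,(\id+\cR_p)^{s/\nu}\phi\rangle=:\langle T_h,\phi\rangle$$
defines a tempered distribution $T_h$, which by construction is $(\id+\bar\cR_{p'})^{s/\nu}h$ interpreted via transposition. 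Theorem~\ref{thm_sobolev_spaces}\eqref{item_thm_sobolev_spaces_distrib} then places $T_h$ in $L^{p'}_{-s,\bar\cR}(G)$: a direct computation on Schwartz test functions gives $(\id+\bar\cR_{p'})^{-s/\nu}T_h=h\in L^{p'}(G)$, whence $\|T_h\|_{L^{p'}_{-s,\bar\cR}}=\|h\|_{L^{p'}}=\|\Psi\|$.

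Conversely, given $T\in L^{p'}_{-s,\bar\cR}(G)$, Theorem~\ref{thm_sobolev_spaces}\eqref{item_thm_sobolev_spaces_distrib} provides $h:=(\id+\bar\cR_{p'})^{-s/\nu}T\in L^{p'}(G)$, and for $\phi\in\cS(G)$
$$\bigl|\langle T,\phi\rangle\bigr|=\Bigl|\int_G h\cdot(\id+\cR_p)^{s/\nu}\phi\,dx\Bigr|\leq\|h\|_{L^{p'}}\,\|\phi\|_{L^p_{s,\cR}(G)},$$
so $\phi\mapsto\langle T,\phi\rangle$ extends to a bounded linear functional on $L^p_{s,\cR}(G)$. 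A quick verification shows these two constructions are mutually inverse and preserve norms, yielding the claimed isometric isomorphism implemented by the distributional pairing.

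The one real obstacle is bookkeeping around the two different meanings of $(\id+\bar\cR_{p'})^{\pm s/\nu}$: the closed (possibly unbounded) operator on $L^{p'}(G)$ of Theorem~\ref{thm_fractional_power} versus the formally adjoint operator acting on $\cS'(G)$ by transposition through $\cS(G)$. The bridge between them is precisely the distributional characterisation of Sobolev spaces in Theorem~\ref{thm_sobolev_spaces}\eqref{item_thm_sobolev_spaces_distrib}, which is what guarantees that the distribution $T_h$ built from $h$ actually lies in the $\cS(G)$-completion $L^{p'}_{-s,\bar\cR}(G)$ rather than merely in $\cS'(G)$.
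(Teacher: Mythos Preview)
Your argument is correct and is precisely the approach the paper intends: the paper does not write out a proof but states that the corollary is an ``easy consequence of Part~\eqref{item_thm_sobolev_spaces_distrib}'' of Theorem~\ref{thm_sobolev_spaces}, left to the reader. You have filled in exactly that consequence, transporting the $L^p$--$L^{p'}$ duality through the isometry $(\id+\cR_p)^{s/\nu}$ and invoking the distributional characterisation to identify the resulting distribution with an element of $L^{p'}_{-s,\bar\cR}(G)$; the only minor point is that the claim $(\id+\cR_p)^{s/\nu}(\cS(G))\subset\cS(G)$ for arbitrary real $s$ is not stated verbatim in Lemma~\ref{lem_cB_S} but follows by writing $s/\nu=N-\beta$ with $N\in\bN$, $\beta\geq 0$, and combining parts (i) and (iii).
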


Corollary \ref{cor_sobolev_spaces_duality} will be improved in Proposition \ref{prop_sobolev_spaces_duality} once we show (see Theorem \ref{thm_Lps_indep_cR}) that Sobolev spaces are indeed independent of the considered Rockland operator.

\begin{corollary}
\label{cor_sobolev_spaces_cD_dense}
We keep the setting and notation of Theorem~\ref{thm_sobolev_spaces}.
Let $s\in \bR$ and $p\in [1,\infty)\cup\{\infty_o\}$. 
Then $\cD(G)$ is dense in $L^p_s(G)$.
\end{corollary}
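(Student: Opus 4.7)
The plan is to use that $\cS(G)$ is dense in $L^p_s(G)$ by definition, so it suffices to approximate any Schwartz function by compactly supported smooth functions in the Sobolev norm. I would split according to the sign of $s$.

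For $s=0$, Theorem~\ref{thm_sobolev_spaces}\eqref{item_thm_sobolev_spaces_s=0} identifies $L^p_0(G)$ with $L^p(G)$ (or $C_o(G)$ if $p=\infty_o$), and the classical density of $\cD(G)$ in these spaces (recalled just after Lemma~\ref{lem_approximatation}) settles the case. For $s<0$, Corollary~\ref{cor_Riesz_Bessel}(ii) ensures that $(\id+\cR_p)^{s/\nu}$ is a bounded operator on $L^p_0(G)$, hence $\|\cdot\|_{L^p_s(G)}\le C\|\cdot\|_{L^p_0(G)}$ on $\cS(G)$. An $\epsilon/2$ triangle argument then combines this continuity, the $s=0$ density, and the definitional density of $\cS(G)$ in $L^p_s(G)$ to conclude.

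The substantive case is $s>0$. I would fix an integer $N$ with $N\nu\ge s$. By the continuous embedding $L^p_{N\nu}(G)\hookrightarrow L^p_s(G)$ from Theorem~\ref{thm_sobolev_spaces}\eqref{item_thm_sobolev_spaces_inclusions}, it suffices to approximate every $\phi\in \cS(G)$ by elements of $\cD(G)$ in the $L^p_{N\nu}$-norm, which by Theorem~\ref{thm_sobolev_spaces}\eqref{item_thm_sobolev_spaces_s>0} is equivalent to $\|\cdot\|_{L^p_0}+\|\cR^N\cdot\|_{L^p_0}$. Choose a cutoff $\chi\in \cD(G)$ with $\chi\equiv 1$ near $0$, set $\chi_k(x):=\chi(k^{-1}\cdot x)$ via the group dilations, and let $\phi_k:=\chi_k\phi\in \cD(G)$. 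Dominated convergence (or uniform convergence from the Schwartz decay of $\phi$ in the $p=\infty_o$ case) yields $\phi_k\to\phi$ in $L^p_0(G)$. Writing $\cR^N$ as a polynomial in the left-invariant vector fields $X_j$ and applying the Leibniz rule to each monomial produces
$$
\cR^N(\phi_k-\phi)=(\chi_k-1)\cR^N\phi+R_k,
$$
where every summand in $R_k$ is of the form $(X^\alpha\chi_k)(X^\beta\phi)$ with $[\alpha]\ge 1$. The homogeneity identity $X^\alpha\chi_k(x)=k^{-[\alpha]}(X^\alpha\chi)(k^{-1}\cdot x)$ (from $D_rX_j=r^{\upsilon_j}X_j$) gives $\|X^\alpha\chi_k\|_\infty=O(k^{-[\alpha]})$, so each such term is $O(k^{-1})$ in $L^p_0(G)$ because $X^\beta\phi\in \cS(G)\subset L^p_0(G)$ is bounded in norm; the first summand $(\chi_k-1)\cR^N\phi$ tends to $0$ by the same argument used for $\phi_k\to\phi$, since $\cR^N\phi\in\cS(G)$.

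The main obstacle is the bookkeeping of the Leibniz expansion of $\cR^N$ in a basis of $\fU(\fg)$ and verifying that every cross-term carries a strictly positive power of $k^{-1}$; once the homogeneity identity for $X^\alpha\chi_k$ is recorded, the remaining estimates are routine, and the three cases glue together to give density of $\cD(G)$ in $L^p_s(G)$ for every $s\in\bR$.
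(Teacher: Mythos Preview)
Your argument is correct. The paper's own proof is extremely terse: for $s\ge 0$ it simply points back to the proofs of Parts~\eqref{item_thm_sobolev_spaces_s=0} and~\eqref{item_thm_sobolev_spaces_s>0} of Theorem~\ref{thm_sobolev_spaces} without spelling out any cutoff argument, and for $s<0$ it says to rerun the functional $T_{s,p',f}$ machinery from the proof of Part~\eqref{item_thm_sobolev_spaces_distrib} with $f_\ell\in\cD(G)$. Your route is more self-contained and in places simpler: for $s<0$ you bypass the duality functional entirely and just use the elementary inequality $\|\cdot\|_{L^p_s}\le C\|\cdot\|_{L^p_0}$ coming from boundedness of the Bessel potential, which is cleaner; for $s>0$ you supply the standard dilation-cutoff-plus-Leibniz argument that the paper leaves implicit. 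One small remark on bookkeeping: when you expand $\cR^N(\chi_k\phi)$ via iterated Leibniz, the cross-terms are products $(D_1\chi_k)(D_2\phi)$ where $D_1,D_2$ are \emph{words} in the $X_j$'s rather than monomials $X^\alpha$ in the PBW ordering, but this is harmless since the homogeneity identity $D_1\chi_k = k^{-\deg D_1}(D_1\chi)\circ D_{k^{-1}}$ holds for any left-invariant homogeneous operator $D_1$, and you have already flagged this bookkeeping as the main point to track.
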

\begin{proof}[Proof of Corollary~\ref{cor_sobolev_spaces_cD_dense}]
This is certainly true for $s\geq 0$ (see the proof of Parts \eqref{item_thm_sobolev_spaces_s=0} 
and  \eqref{item_thm_sobolev_spaces_s>0} 
of Theorem~\ref{thm_sobolev_spaces}).
For $s<0$, it suffices to proceed as in the last part of the proof of 
Part~\eqref{item_thm_sobolev_spaces_distrib} 
with a sequence of functions $f_\ell\in \cD(G)$.
\end{proof}

\medskip

 In the next statement, we show how to produce functions and converging sequences in Sobolev spaces
 using the convolution:
\begin{proposition}
\label{prop_construction_fcn_sobolov_spaces}
We keep the setting and notation of Theorem~\ref{thm_sobolev_spaces}.
Here $a \in \bR$ and $p\in [1,\infty)\cup\{\infty_o\}$.
\begin{itemize}
\item[(i)] 
If $f\in L^p_0(G)$ and  $\phi\in \cS(G)$, then $f*\phi\in L^p_a$ for any $a$ and $p$.

\item[(ii)] 
If $f\in L^p_a(G)$ and $\psi\in \cS(G)$, 
then 
\begin{equation}
\label{eq_I+cR_psi*f}
(\id+\cR_p)^{\frac a\nu} (\psi*f) = 
 \psi*\left((\id+\cR_p)^{\frac a\nu}f\right) ,
\end{equation}
and $\psi*f\in L^p_a(G)$
with $\| \psi*f \|_{L^p_a(G)}\leq \|\psi\|_{L^1(G)} \| f \|_{L^p_a(G)}$.
Furthermore, writing 
$ \psi_\epsilon(x):= \epsilon^{-Q} \psi(\epsilon^{-1} x) $
for each $\epsilon>0$, 
then $\{\psi_\epsilon* f\}$ converges to  $f$ in $L^p_a(G)$ as $\epsilon\to0$.
\end{itemize}
\end{proposition}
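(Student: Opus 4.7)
For Part (i), my plan is to reduce showing $f*\phi \in L^p_a$ for all $a \in \bR$ to the statement $f*\phi \in L^p_{N\nu}$ for every $N \in \bN$, where $\nu$ is the homogeneous degree of $\cR$; the continuous inclusions in Theorem~\ref{thm_sobolev_spaces}~\eqref{item_thm_sobolev_spaces_inclusions} then cover all $a$. By Theorem~\ref{thm_sobolev_spaces}~\eqref{item_thm_sobolev_spaces_s>0}, $L^p_{N\nu}$ coincides with $\dom(\cR_p^N)$. The key observation is the distributional identity $\cR^k(f*\phi) = f * (\cR^k\phi)$, which follows from the left-invariance of $\cR$. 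Since $\cR^k\phi \in \cS(G) \subset L^1(G)$, Young's convolution inequality gives $f * \cR^k\phi \in L^p_0(G)$, and a short induction using Theorem~\ref{thm_cRp}~(iv) then promotes these distributional identities to the membership $f*\phi \in \dom(\cR_p^N)$ with $\cR_p^N(f*\phi) = f*\cR^N\phi$.

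For Part (ii), I will first establish the commutation identity \eqref{eq_I+cR_psi*f} for $f \in \cS(G)$ and then extend by density. For $a > 0$, I would set $g := (\id+\cR_p)^{a/\nu} f \in L^p_0(G)$ and invoke Corollary~\ref{cor_Riesz_Bessel}~(ii), which identifies $(\id+\cR_p)^{-a/\nu}$ with right-convolution by the integrable Bessel kernel $\cB_{a}$. Associativity of convolution (legitimate since $\psi$ and $\cB_{a}$ are integrable) yields
\[
\psi * f = \psi * (g * \cB_{a}) = (\psi * g) * \cB_{a} = (\id+\cR_p)^{-a/\nu}(\psi * g),
\]
and then applying $(\id+\cR_p)^{a/\nu}$ and using $(\id+\cR_p)^{a/\nu}(\id+\cR_p)^{-a/\nu} = \id$ from Theorem~\ref{thm_fractional_power}~\eqref{item_thm_fractional_power_common_cR_id+cR} gives \eqref{eq_I+cR_psi*f}. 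The case $a=0$ is trivial, and the case $a < 0$ is immediate since $(\id+\cR_p)^{a/\nu}$ is itself convolution with the integrable kernel $\cB_{-a}$, so the same associativity argument applies directly.

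The norm bound is then immediate from Young's inequality applied to the right-hand side of \eqref{eq_I+cR_psi*f}. I would next extend the identity and the norm bound to arbitrary $f \in L^p_a(G)$ by density of $\cS(G)$ in $L^p_a(G)$ (Corollary~\ref{cor_sobolev_spaces_cD_dense}): if $f_n \to f$ in $L^p_a$, the norm estimate makes $\{\psi * f_n\}$ Cauchy in $L^p_a$, and convergence of $f_n$ in $\cS'(G)$ (via $L^p_a \hookrightarrow \cS'$) identifies the limit as $\psi * f$. The identity \eqref{eq_I+cR_psi*f} then passes to the limit because $(\id+\cR_p)^{a/\nu}\colon L^p_a \to L^p_0$ is an isometry and left-convolution by $\psi$ is bounded on $L^p_0$. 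Finally, to obtain the approximation claim, I would apply \eqref{eq_I+cR_psi*f} to rewrite
\[
\|\psi_\epsilon * f - f\|_{L^p_a} = \|\psi_\epsilon * g - g\|_{L^p_0}, \qquad g := (\id+\cR_p)^{a/\nu} f \in L^p_0(G),
\]
and invoke Lemma~\ref{lem_approximatation} (reading the statement with the normalisation $\int \psi = 1$; otherwise the limit is $(\int\psi)\, f$).

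The main obstacle is the commutation identity \eqref{eq_I+cR_psi*f} itself on $\cS(G)$: one must check carefully that each associativity step is legitimate within the framework of distributional convolution on a non-commutative group, which hinges precisely on $\psi$ and $\cB_{\pm a}$ being integrable, so that Proposition~\ref{prop_conv_kernel_typnu} is not required. Once this core identity is in hand, the remaining density and approximation steps are routine.
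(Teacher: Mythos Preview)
Your proof is correct. Part~(i) matches the paper's argument: both reduce to $f*\phi\in\dom(\cR_p^N)$ via the left-invariance identity $\cR^N(f*\phi)=f*(\cR^N\phi)$ and Young's inequality, with only a cosmetic difference (the paper approximates $f$ by Schwartz functions, you invoke Theorem~\ref{thm_cRp}~(iv) directly). For Part~(ii), your route to \eqref{eq_I+cR_psi*f} on $\cS(G)$ differs from the paper's. The paper establishes the identity first for $a<0$ (where $(\id+\cR_p)^{a/\nu}$ is right-convolution by $\cB_{-a}$) and then extends to all real $a$ by analytic continuation in $a$, using the holomorphy of $a\mapsto(\id+\cR_p)^{a/\nu}\phi$ from Theorem~\ref{thm_fractional_power} and Lemma~\ref{lem_cB_S}. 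You instead treat $a>0$ directly by writing $f=g*\cB_a$ with $g=(\id+\cR_p)^{a/\nu}f\in L^p$ and invoking associativity among $L^1$ and $L^p$ functions; this is more elementary and avoids the analyticity machinery altogether. For the extension from $\cS(G)$ to general $f\in L^p_a(G)$, the paper uses a duality computation (testing against Schwartz functions and applying the Schwartz-case identity to $\tilde\psi*\phi$ on the $\bar\cR_{p'}$ side), whereas you extend by density; both are standard and valid. The final approximation step via Lemma~\ref{lem_approximatation} is identical in both proofs, and you are right to flag the implicit normalisation $\int\psi=1$ that the proposition's statement leaves unstated.
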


\begin{proof}[Proof of Proposition~\ref{prop_construction_fcn_sobolov_spaces}]
Let us prove Part (i).
Here $f\in L^p_0(G)$. 
By density of $\cS(G)$ in $L^p_0(G)$, we can find 
a sequence of Schwartz functions $\{f_\ell\}$ 
converging to $f$ in $L^p_0$-norm.
Then $f_\ell*\phi\in \cS(G)$ and for any $N\in \bN$, 
$$
\cR^N(f_\ell*\phi)=f_\ell*\cR^N \phi
\underset{\ell\to\infty} \longrightarrow f*\cR^N \phi
\quad\mbox{in}\ L^p_0(G)
,
$$
thus $\cR^N_p (f*\phi)=f*\cR^N \phi\in L^p(G)$ and 
$$
\|f*\phi\|_{L^p_0(G)} + \|\cR ^N_p (f*\phi)\|_{L^p_0(G)}
\;<\infty .
$$
By Theorem~\ref{thm_sobolev_spaces} \eqref{item_thm_sobolev_spaces_inclusions},
  this shows that $f*\phi$ is in $L^p_{\nu N}$ for any $N\in \bN$,
  hence in any $p$-Sobolev spaces
  (cf. Theorem \ref{thm_sobolev_spaces} 
  \eqref{item_thm_sobolev_spaces_inclusions}). 
  This proves (i).

Let us prove Part (ii).
We observe that 
both sides of Formula \eqref{eq_I+cR_psi*f}
 always make sense as  convolutions of a Schwartz function with a tempered distribution.
Formula \eqref{eq_I+cR_psi*f}
 is clearly true if $a<0$ by Corollary \ref{cor_Riesz_Bessel} (ii)
since then the $(\id+\cR_p)^{\frac a\nu}$ is a convolution operator.
Consequently 
\eqref{eq_I+cR_psi*f} is true also for any $f,\psi\in \cS(G)$ and $a\in \bR$
by the analyticity result of Theorem~\ref{thm_fractional_power}
and Lemma \ref{lem_cB_S}. 
Using this result for Schwartz functions yields
 that Equality \eqref{eq_I+cR_psi*f}  holds as distributions  for any $f\in L^p_a(G)$,
$\phi\in \cS(G)$, and $a\in \bR$, since we have
\begin{eqnarray*}
&&\langle (\id+\cR_p)^{\frac a\nu} (\psi*f) , \phi\rangle
=\langle \psi*f ,  (\id+ \bar\cR_{p'})^{\frac a\nu}\phi\rangle
=\langle f ,  \tilde \psi* (\id+ \bar\cR_{p'})^{\frac a\nu}\phi\rangle
\\&&\qquad
=\langle f , (\id+ \bar\cR_{p'})^{\frac a\nu} ( \tilde \psi* \phi)\rangle
=\langle (\id+ \cR_p)^{\frac a\nu} f , \tilde \psi* \phi \rangle
.
\end{eqnarray*}
Taking the $L^p$-norm on both sides of Equality \eqref{eq_I+cR_psi*f}  yields
$$
\|(\id+\cR_p)^{\frac a\nu} (\psi*f)\|_p = 
\| \psi*\left((\id+\cR_p)^{\frac a\nu}f\right) \|_p
\leq \|\psi\|_1 
\| (\id+\cR_p)^{\frac a\nu}f \|_p.
$$
Hence $\psi*f\in L^p_a(G)$ with $L^p_a$-norm $\leq \|\psi\|_1 
\| f \|_{L^p_a(G)}$.
Moreover, by Lemma~\ref{lem_approximatation},
\begin{eqnarray*}
&&\|\psi_\epsilon*f -f\|_{L^p_a(G)}
=
\| (\id+ \cR_p)^{\frac a\nu}(\psi_\epsilon*f -f)\|_p
\\&&\qquad=
\| \psi_\epsilon* \left((\id+ \cR_p)^{\frac a\nu}f\right) - (\id+ \cR_p)^{\frac a\nu}f\|_p
\longrightarrow_{\epsilon\to0}0, 
\end{eqnarray*}
that is, $\{\psi_\epsilon*f \}$ converges to $f$ in $L^p_a(G)$ as $\epsilon\to 0$.
This proves (ii).
\end{proof}

We note that, in general, 
keeping the notation of Proposition~\ref{prop_construction_fcn_sobolov_spaces} (ii),
it is not possible to prove that 
$\{ f*\psi_\epsilon\}$ converges to  $f$ in $L^p_a(G)$ as $\epsilon\to0$
for any sequence $\{\psi_\epsilon\}_{\epsilon>0}$.
We need to know that $\{\psi_\epsilon\}_{\epsilon>0}$
yields an $L^p$ approximation of the identity, 
that is, $f*\psi_\epsilon\to f$ in $L^p$ as $\epsilon\to0$ for any $f\in L^p(G)$.

\subsection{Interpolation between Sobolev spaces}
\label{sec_interpolation_Lps}
\index{interpolation!between Sobolev spaces}

In this section, we prove that interpolation between Sobolev spaces $L^p_a(G)$ works in the same way as its Euclidean counterpart.

\begin{theorem}
\label{thm_sobolev_spaces_interpolation}
Let $\cR$ and $\cQ$ be two positive Rockland operators 
on two graded Lie groups $G$ and $F$.
We consider their associated Sobolev spaces $L^p_a(G)$ and $L^q_b(F)$.
Let $p_0,p_1,q_0,q_1\in (1,\infty)$ 
and real numbers $a_0,a_1,b_0,b_1$.

We also consider a linear mapping $T$ from $L^{p_0}_{a_0}(G) + L^{p_1}_{a_1}(G)$
to locally integrable functions on $F$.
We assume that $T$ maps $L^{p_0}_{a_0}(G)$ and $L^{p_1}_{a_1}(G)$
boundedly into $L^{q_0}_{b_0}(F)$ and $L^{q_1}_{b_1}(F)$, respectively.

Then $T$ extends uniquely to a bounded mapping from 
$L^p_{a_t}(G)$ to $L^q_{b_t}(F)$ for $t\in [0,1]$
where $a_t,b_t,p_t,q_t$ are defined by
$$
\left(a_t,b_t,\frac 1{p_t},\frac 1{q_t}\right) = (1-t)\left(a_0,b_0,\frac 1{p_0},\frac 1{q_0}\right)
+t\left(a_1,b_1,\frac1{p_1},\frac 1{q_1}\right)
.
$$
\end{theorem}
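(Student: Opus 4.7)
The plan is to reduce the Sobolev-scale interpolation to the classical $L^p \to L^q$ complex interpolation of Stein, by conjugating $T$ with Bessel potentials on both sides. Let $\nu_{\cR}$ and $\nu_{\cQ}$ denote the homogeneous degrees of $\cR$ and $\cQ$. Since $(\id+\cR)^{a/\nu_{\cR}}$ is an isometric isomorphism $L^p_a(G) \to L^p(G)$ (and similarly for $\cQ$ on $F$), it is enough to prove that for each $t \in [0,1]$ the operator
\begin{equation*}
S_t := (\id + \cQ_{q_t})^{b_t/\nu_{\cQ}} \circ T \circ (\id + \cR_{p_t})^{-a_t/\nu_{\cR}}
\end{equation*}
extends boundedly from $L^{p_t}(G)$ to $L^{q_t}(F)$. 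The endpoint cases $t = 0, 1$ are immediate from the hypotheses on $T$ together with Theorem \ref{thm_fractional_power} and Corollary \ref{cor_Riesz_Bessel}.

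Next, I would embed $S_t$ into an analytic family. For $z$ in the closed strip $\overline{\Sigma} := \{z \in \bC : 0 \le \Re z \le 1\}$, set $a(z) := (1-z)a_0 + za_1$ and $b(z) := (1-z)b_0 + zb_1$, and define
\begin{equation*}
S_z := (\id + \cQ)^{b(z)/\nu_{\cQ}} \circ T \circ (\id + \cR)^{-a(z)/\nu_{\cR}}
\end{equation*}
initially on $\cS(G)$. Combining Lemma \ref{lem_cB_S} with Theorem \ref{thm_fractional_power}\eqref{item_thm_fractional_power_common_cR_id+cR}, the operator $(\id+\cR)^{s/\nu_{\cR}}$ maps $\cS(G)$ into itself for every $s \in \bC$. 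Consequently $(\id + \cR)^{-a(z)/\nu_{\cR}} g \in \cS(G) \subset L^{p_0}_{a_0}(G) \cap L^{p_1}_{a_1}(G)$, so $T$ is applicable and its image lies in $L^{q_0}_{b_0}(F) \cap L^{q_1}_{b_1}(F)$; a further application of $(\id + \cQ)^{b(z)/\nu_{\cQ}}$ then yields an element of $\cS'(F)$, so $S_z g$ is well defined.

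For the boundary estimates, by the commutativity of Theorem \ref{thm_fractional_power}\eqref{item_thm_fractional_power_commute} one has
\begin{equation*}
S_{iy} = (\id + \cQ)^{iy(b_1 - b_0)/\nu_{\cQ}} \circ S_0 \circ (\id + \cR)^{-iy(a_1 - a_0)/\nu_{\cR}} ,
\end{equation*}
and by Theorem \ref{thm_fractional_power}\eqref{item_thm_fractional_power_(id+cR)itau} each outer factor is bounded on the relevant $L^{p_0}$, $L^{q_0}$ with operator norm at most $Ce^{\theta|y|}$, so that $\|S_{iy}\|_{L^{p_0}(G) \to L^{q_0}(F)} \le A_0 e^{\theta_0 |y|}$; the same argument at $\Re z = 1$ gives $\|S_{1+iy}\|_{L^{p_1}(G) \to L^{q_1}(F)} \le A_1 e^{\theta_1 |y|}$. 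Holomorphy of $z \mapsto \langle S_z g, h\rangle$ on the open strip and continuity up to its closure, for $g \in \cS(G)$ and $h \in \cS(F)$, follow from the holomorphy assertions in Theorem \ref{thm_fractional_power}\eqref{item_thm_fractional_power_common_cR_id+cR}. Stein's complex interpolation theorem (the exponential boundary bounds falling well within the admissibility threshold) then yields $\|S_t\|_{L^{p_t}(G) \to L^{q_t}(F)} \le A_0^{1-t} A_1^t$, and density of $\cS(G)$ in $L^{p_t}_{a_t}(G)$ (Corollary \ref{cor_sobolev_spaces_cD_dense}) produces the unique bounded extension.

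The main obstacle I anticipate is the careful verification that the family $S_z$ is truly well defined and jointly analytic on a common dense subspace: one must track how the Bessel potentials act consistently across the several $L^p$-spaces in play, and confirm that $T$ -- assumed bounded only on two fixed Sobolev spaces -- may be applied meaningfully throughout the strip, with $T\bigl((\id+\cR)^{-a(z)/\nu_{\cR}} g\bigr)$ living in a single Sobolev space on which $(\id+\cQ)^{b(z)/\nu_{\cQ}}$ depends analytically on $z$. Once this bookkeeping is settled via Lemma \ref{lem_cB_S} and the analyticity clauses of Theorem \ref{thm_fractional_power}, the appeal to Stein's theorem is routine.
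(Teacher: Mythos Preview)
Your approach is essentially the paper's: conjugate $T$ by Bessel potentials to form an analytic family $T_z$ on the strip and invoke Stein's complex interpolation, with the boundary bounds coming from the imaginary-power estimates of Theorem~\ref{thm_fractional_power}. The paper resolves the ``main obstacle'' you flag by first reducing via duality to $a_1\ge a_0$, $b_1\le b_0$, then pairing with $\psi\in\cS(F)$ and shifting exponents by integers $M,N$ so that both $z$-dependent Bessel potentials have negative real part (hence are honest convolutions with $\cB_a$, making holomorphy and the admissible-growth bound in the strip transparent via Corollary~\ref{cor_Riesz_Bessel_opnorm_power_I+R}), and finally mollifying to pass from $\cS(G)$ to simple functions before applying Stein--Weiss.
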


The idea of the proof is similar to the one of the Euclidean or stratified cases,
see \cite[Theorem 4.7]{folland_75},
with some  modifications 
since our estimates for $\|(\id+\cR)^{i\tau}\|_{\sL(L^p)}$ are different to 
the ones obtained by Folland in \cite{folland_75}.
For this,
compare Corollary \ref{cor_Riesz_Bessel_opnorm_power_I+R} in this monograph with \cite[Proposition 4.3]{folland_75}.
See also Remark \ref{rem_opnorm_I+R_itau}.

\begin{proof}[Proof of Theorem~\ref{thm_sobolev_spaces_interpolation}]
By duality (see Corollary \ref{cor_sobolev_spaces_duality})
and up to a change of notation, 
 it suffices to prove  the case 
$a_1\geq a_0$ and $b_1\leq b_0$.
The idea is to interpolate between the operators 
formally given by
\begin{equation}
\label{eq_def_Tz_pf_thm_sobolev_spaces_interpolation}
T_z= (\id+\cQ)^{b_z/\nu_\cQ} T (\id+\cR)^{-a_z/\nu_\cR},
\end{equation}
where $\nu_\cR$ and $\nu_\cQ$ denote the degrees of homogeneity of $\cR$ and $\cQ$ respectively
and the complex numbers $a_z$ and $b_z$ are defined by
$$
\left(a_z,b_z\right) := z\left(a_1,b_1\right)
+(1-z) \left(a_0,b_0\right),
$$
for $z$ in the strip
$$
S:=\{z\in \bC \ : \ \Re z\in [0,1]\}
.
$$
In \eqref{eq_def_Tz_pf_thm_sobolev_spaces_interpolation},
we have abused the notation regarding the fractional powers of $\cR_p$ and $\cQ_q$ and removed $p$ and $q$. 
This is possible by Lemma  \ref{lem_cB_S} and density of the Schwartz space in each Sobolev space. Hence \eqref{eq_def_Tz_pf_thm_sobolev_spaces_interpolation} makes sense. 

By Lemma \ref{lem_cB_S},
for any $\phi\in \cS(G)$ and $\psi\in \cS(F)$, 
we have
$$
\langle T_z \phi , \psi \rangle 
=
\langle T (\id+\cR)^{-N -\frac{a_z}{\nu_\cR}}
(\id+\cR)^{N} \phi , 
(\id+\bar \cQ)^{-M +\frac{ b_z} {\nu_\cQ}} 
(\id+\bar \cQ)^{M} 
 \psi \rangle 
$$
for any $M,N\in \bZ$.
In particular for $-M$ and $-N$ large enough, 
 Theorem \ref{thm_fractional_power} implies
 that $S\ni z\mapsto \langle T_z \phi , \psi \rangle $
is analytic.
With $M=N\in \bN$ the smallest integer with $N>a_1,a_0, b_1,b_0$, 
by Corollary \ref{cor_Riesz_Bessel_opnorm_power_I+R},
we get
$$
\left|\langle T_z \phi , \psi \rangle \right| \leq 
\frac{\Gamma\left(\frac{N -\Re z (a_1-a_0)}{\nu_{\cR}}\right)}
{\left|\Gamma\left(\frac {N-z(a_1-a_0)}{\nu_{\cR}}\right)\right|}
\frac{\Gamma\left(\frac{N-\Re z(b_0-b_1)}{\nu_{\cQ}}\right)}
{\left|\Gamma\left(\frac {N-z(b_0-b_1)}{\nu_{\cQ}}\right)\right|}
\|T\|_{\sL(L^{p_1}_{a_1}, L^{q_1}_{b_1})}
 \|\phi\|_{L^{p_1}_{N}}\|\psi \|_{L^{q_1'}_{N}}
.
$$
Using Sterling's estimates,
we obtain
$$
\forall z=x+iy\in S\qquad
\ln \left|\langle T_z \phi , \psi \rangle \right| \leq 
  \ln |y| (2|y|+O(\ln |y|) )
 $$
with the constant from the notation $O$ depending on $\phi,\psi,a_1,a_0,b_1,b_0$.

The operator norms of 
$T_z$ for $z$ on the boundary of the strip, that is, $z=j+iy$, $j=0,1$, $y\in \bR$
may be easily estimated by:
$$
\|T_z \|_{\cL(L^{p_j}, L^{q_j}) }
 \leq
\|(\id+\cQ_{q_j})^{\frac{b_z-b_j}{\nu_{\cQ}}}\|_{\sL(L^{q_j})}
\|T\|_{\sL(L^{p_j}_{a_j}, L^{q_j}_{b_j})}
\|(\id+\cR_{p_j})^{\frac{a_j-a_z}{\nu_{\cR}}}\|_{\sL(L^{p_j})}.
$$
And since  $\Re (b_z-b_j) \leq 0$
and $\Re (a_j-a_z)\leq 0$, 
Corollary \ref{cor_Riesz_Bessel_opnorm_power_I+R}
then implies
$$
\sup_{y\in \bR} e^{- 3|y|} \ln \|T_{j+iy}\|_{\cL(L^{p_j}, L^{q_j}) } <\infty
,\quad j=0,1
.
$$

The end of the proof is now classical.
We  fix a non-negative function $\chi\in \cS(G)$ with $\int_G \chi=1$
and write
$ \chi_\epsilon(x):= \epsilon^{-Q} \chi(\epsilon^{-1} x) $ for $\epsilon>0$.
If $f\in \sB$, one can show easily  that $f*\chi_\epsilon\in \cS(\bR^n)$ 
and we can define
$T_{z,\epsilon} f :=  T_z\left(f*\chi_\epsilon\right)$ for any $\epsilon>0$, $z\in S$.
Clearly $T_{z,\epsilon}$ satisfy the hypotheses of the Stein-Weiss interpolation theorem,
see \cite[ch. V \S 4]{stein+weiss}.
 Thus for any $t\in [0,1]$, there exists a constant  $M_t>0$ independent of $\epsilon$ such that
 $\|T_{t,\epsilon} f\|_{q_t}\leq M_t \|f\|_{p_t}$
 for any $ f\in \sB$. 

For $p\in (1,\infty)$,
let $\cV_p$ be  the space  of functions $\phi$ 
of the form $\phi=f*\chi_\epsilon$,
with $f\in \sB$ and $\epsilon>0$,
satisfying $\|f\|_p \leq 2\|f*\chi_\epsilon\|_p$.
It is easy to show that
the space $\cV_p$ contains  $\cS(G)$ 
and is dense in $L^p(G)$.
We have obtained 
for any $t\in[0,1]$ and $\phi=f*\chi_\epsilon\in \cV_{p_t}$, that
$$
\|T_t \phi\|_{q_t} = 
 \|T_{t,\epsilon} f\|_{q_t}\leq M_t \|f\|_{p_t}
\leq 2M_t \|\phi\|_{p_t}
.
$$ 
This shows that $T_t$ extends to a bounded operator from $L^{p_t}(G)$ to $L^{q_t}(G)$.
\end{proof}

\subsection{Differential operators acting on Sobolev spaces}
\label{sec_diff_op_on_Lps}

In this section we study how differential operators act on Sobolev spaces.

\begin{theorem}
\label{thm_TR_RT_kerneloftype0}
Let $T$ be any homogeneous left-invariant differential operator of homogeneous degree $\nu_T>0$.
Then for every $p\in (1,\infty)$, 
the operators 
$T \cR_p^{-\frac{\nu_T} \nu}$ 
and
$\cR_p^{-\frac{\nu_T} \nu} T$ 
are of type 0 and, consequently, extend to continuous operators on $L^p(G)$.

Furthermore, $T$ maps continuously $L^p_{s+\nu_T}(G)$ to $L^p_s(G)$ for every $s\in \bR$,
and if $s>0$, 
there exists a constant $C=C_{s,T}>0$ such that 
$$
\forall \phi\in \cS(G)\qquad
\|\cR_p^{\frac {s -\nu_T} \nu} T \phi\|_p \leq C \|\cR^{\frac s\nu } \phi\|_p.
$$
\end{theorem}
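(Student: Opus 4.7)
The plan for the first part is to realize $T\cR_p^{-\nu_T/\nu}$ as convolution with a kernel of type~$0$, and then invoke the Calder\'on--Zygmund theory on homogeneous groups. Following the integration-by-parts scheme from the proof of Theorem~\ref{thm_fractional_power}\eqref{item_thm_fractional_power_cRitau}, I take as candidate kernel
\[
\kappa_T(x):=\frac{1}{\Gamma(\nu_T/\nu)}\int_0^\infty t^{\nu_T/\nu-1}(Th_t)(x)\,dt,\qquad x\ne 0.
\]
The estimates on $Th_t(x)$ at fixed $x\ne0$ (from Theorem~\ref{thm_heat_kernel}, adapted from $X^\alpha$ to $T$) yield absolute convergence and smoothness on $G\setminus\{0\}$, and the change of variables $s=r^{-\nu}t$ together with \eqref{eq_heat_kernel_homogeneity} shows that $\kappa_T$ is homogeneous of degree $-Q$. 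For $\phi$ in a suitable dense subspace (e.g.\ $\cR^N(\cS(G))$ with $N$ large enough that the integral representation of $\cR_p^{-\nu_T/\nu}$ from Theorem~\ref{thm_fractional_power}\eqref{item_thm_fractional_power_id_L_Gamma} applies, possibly after the splitting $\cR_p^{-\nu_T/\nu}=\cR_p^{-k}\cR_p^{-(\nu_T/\nu-k)}$ when $\nu_T\ge\nu$), I verify that $T\cR_p^{-\nu_T/\nu}\phi=\phi*\kappa_T$ in $L^p$. Since $T\cR_p^{-\nu_T/\nu}$ maps $\cS(G)$ continuously into $\cS'(G)$ via Lemma~\ref{lem_cB_S}, its Schwartz kernel is a distribution on all of $G$ agreeing with $\kappa_T$ off the origin; Proposition~\ref{prop_-Qhomo_distrib} then forces the mean value of $\kappa_T$ to vanish, so that $\kappa_T$ is a genuine kernel of type~$0$. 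The standard singular-integral theorem on homogeneous groups (cf.~\cite{folland+stein_bk82}) yields boundedness of $T\cR_p^{-\nu_T/\nu}$ on $L^p(G)$ for $p\in(1,\infty)$. For $\cR_p^{-\nu_T/\nu}T$, I apply the same argument to the pair $(T^t,\bar\cR)$ and dualize via Theorem~\ref{thm_fractional_power}\eqref{item_thm_fractional_power_common_cR_id+cR}.

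For the Sobolev mapping $T:L^p_{s+\nu_T}\to L^p_s$, I handle first $s=0$: writing $T\phi=(T\cR_p^{-\nu_T/\nu})(\cR_p^{\nu_T/\nu}\phi)$ and combining the just-established $L^p$-boundedness of $T\cR_p^{-\nu_T/\nu}$ with Theorem~\ref{thm_sobolev_spaces}\eqref{item_thm_sobolev_spaces_s>0} gives $\|T\phi\|_p\lesssim\|\phi\|_{L^p_{\nu_T}}$. I then induct on $k\in\bN$ with the hypothesis that \emph{every} homogeneous left-invariant differential operator $S$ of positive degree $\nu_S$ maps $L^p_{j\nu+\nu_S}$ continuously into $L^p_{j\nu}$ for all $j\le k$. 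The step $k\to k+1$ uses the equivalent norm from Theorem~\ref{thm_sobolev_spaces}\eqref{item_thm_sobolev_spaces_inclusions},
\[
\|T\phi\|_{L^p_{(k+1)\nu}}\asymp\|T\phi\|_{L^p_{k\nu}}+\|\cR T\phi\|_{L^p_{k\nu}},
\]
with the second summand controlled by applying the inductive hypothesis to the degree-$(\nu+\nu_T)$ operator $\cR T$. Complex interpolation (Theorem~\ref{thm_sobolev_spaces_interpolation}) across consecutive integer endpoints covers all $s\ge0$; Corollary~\ref{cor_sobolev_spaces_duality} applied to $T^t$ covers $s\le-\nu_T$; and one more interpolation across $[-\nu_T,0]$ fills the remaining range.

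For the final pointwise inequality with $s>0$, the plan is to deduce it from the preceding Sobolev mapping by a scaling argument. The established result, combined with the norm equivalences of Theorem~\ref{thm_sobolev_spaces}, yields $\|\cR_p^{(s-\nu_T)/\nu}T\phi\|_p\le C(\|\phi\|_p+\|\cR_p^{s/\nu}\phi\|_p)$ for $\phi\in\cS(G)$. Applying this to the dilate $\phi_r(x):=\phi(r\cdot x)$ and exploiting the homogeneity degrees $s$, $s-\nu_T$, $\nu_T$ of $\cR_p^{s/\nu}$, $\cR_p^{(s-\nu_T)/\nu}$, $T$ (cf.\ Theorem~\ref{thm_fractional_power}\eqref{item_thm_fractional_power_homogeneous}), both $\cR_p$-containing terms pick up a factor $r^{s-Q/p}$ while $\|\phi_r\|_p=r^{-Q/p}\|\phi\|_p$; dividing by $r^{s-Q/p}$ and sending $r\to\infty$ eliminates the $\|\phi\|_p$ term, leaving the claimed homogeneous inequality.

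The main obstacle will be the mean-value-zero verification in the first part: $\kappa_T$ is defined a priori only on $G\setminus\{0\}$, and a smooth $(-Q)$-homogeneous function there need not extend to a distribution on $G$ (Proposition~\ref{prop_-Qhomo_distrib}). The remedy — realizing $T\cR_p^{-\nu_T/\nu}$ as a continuous map $\cS(G)\to\cS'(G)$ whose Schwartz kernel coincides with $\kappa_T$ off the origin — demands careful tracking of the domains of the fractional powers of $\cR_p$ and a justification of the Fubini-type interchange of $T$ with the heat-kernel integral, particularly in the regime $\nu_T\ge\nu$ where the basic integral representation of $\cR_p^{-\nu_T/\nu}$ from Theorem~\ref{thm_fractional_power}\eqref{item_thm_fractional_power_id_L_Gamma} has to be iterated.
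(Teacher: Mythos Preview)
Your handling of the Sobolev mapping property and of the final homogeneous inequality is essentially the paper's argument: the paper also reduces to integer multiples of $\nu$ (applying the type-$0$ result to $\cR^N T$ rather than phrasing it as an induction, but this is cosmetic), then interpolates and dualizes, and it runs exactly the same dilation argument $\phi\mapsto\phi\circ D_r$, $r\to\infty$, for the last inequality.

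The real divergence is in the first part, and there your plan has a gap. You want to define $\kappa_T$ by the heat integral, observe it is smooth and $(-Q)$-homogeneous on $G\setminus\{0\}$, and then \emph{force} the mean-value-zero condition of Proposition~\ref{prop_-Qhomo_distrib} by invoking the Schwartz kernel theorem for $T\cR_p^{-\nu_T/\nu}:\cS(G)\to\cS'(G)$. But this operator is not defined on all of $\cS(G)$: the negative power $\cR_p^{-\nu_T/\nu}$ is only defined on the range of $\cR_p^{\nu_T/\nu}$, and your citation of Lemma~\ref{lem_cB_S} is off --- that lemma treats $(\id+\cR)^a$, not $\cR^a$. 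Even restricting to $\cR^N(\cS(G))$ does not rescue the argument, because to identify the resulting operator with convolution by $\kappa_T$ you would need $\phi*\kappa_T$ to make sense, and a $(-Q)$-homogeneous function is not locally integrable, so the convolution integral diverges. You flag this as ``the main obstacle'' and you are right; the Schwartz-kernel remedy you propose is circular.

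The paper sidesteps the mean-zero issue entirely. It introduces the family
\[
\cI_{a,\alpha}(x)=\frac{1}{\Gamma(a/\nu)}\int_0^\infty t^{a/\nu-1}X^\alpha h_t(x)\,dt
\]
and makes the simple but decisive observation that for $0<\Re a<Q$ one has $\cI_{a,\alpha}=X^\alpha\cI_a$, where $\cI_a$ is the Riesz kernel of Corollary~\ref{cor_Riesz_Bessel}. Since $\cI_a$ is already a distribution (a kernel of type $a$), its distributional derivative $X^\alpha\cI_a$ is automatically one as well --- no mean-value computation is needed. The identification of $X^\alpha\cR_p^{-[\alpha]/\nu}$ with convolution by $\cI_{[\alpha],\alpha}$ is then carried out by analytic continuation in the complex parameter $a$: for $\phi\in\cR^N(\cS(G))$ the map $a\mapsto(\cR_p^{-a/\nu}\phi)*\tilde X^\alpha\psi_t$ is holomorphic on a wide strip by Theorem~\ref{thm_fractional_power}, and on $\{0<\Re a<Q\}$ it agrees with $a\mapsto(\phi*\cI_{a,\alpha})*\psi_t$, which can be continued to $a=[\alpha]$. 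Letting the mollifier $\psi_t\to\delta$ finishes the job. Writing the candidate kernel as a derivative of the Riesz potential, rather than verifying mean-zero after the fact, is the idea missing from your proposal.
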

\begin{proof}
Let us fix $\alpha\in \bN_0^n \backslash\{0\}$.
Proceeding as in the proof of Corollary \ref{cor_Riesz_Bessel}, 
we can show easily that, for any $a\in \bC$ with  $\Re a-Q-[\alpha]<0$, 
 the integral 
$$
\cI_{a,\alpha} (x):=\frac1{\Gamma(a/\nu)} \int_0^\infty t^{\frac a\nu -1} X^\alpha h_t(x) dt,
$$
converges absolutely for $x\not=0$, 
and in this case it defines a function $\cI_{a,\alpha}$ 
which is smooth away from the origin and $\Re (a-Q-[\alpha])$-homogeneous.
Furthermore, $\cI_{a,\alpha}=X^\alpha \cI_a$ if $\Re a\in (0,Q)$.
Since $\cI_a$ is a distribution, this shows that in this case $\cI_{a,\alpha}$ is also a distribution.
Hence if $\Re a \in (0,Q)$ and $\Re a-Q-[\alpha]<0$ then $\cI_{a,\alpha}$ is a kernel of type $a-[\alpha]$.

Let $N = \lceil \frac{Q+[\alpha]}{\nu}\rceil$. 
We also fix a function $\psi\in \cD(G)$ with $\int_G \psi=1$.
We set $\psi_t(x):= t^{-Q} \psi(t^{-1}x)$.
For any $\phi\in \cR^N(\cS(G))$,
the map $a \mapsto (\cR_p^{-\frac a\nu} \phi) * \tilde X^\alpha \psi_t$
is holomorphic on $\{|\Re a|<N\}$.
On $\{\Re a\in (0,Q)\}$ it coincides with 
$a \mapsto (\phi * \cI_a) * \tilde X^\alpha \psi_t$.
But we see that
$$
(\phi * \cI_a) * \tilde X^\alpha \psi_t
=
\left(X^\alpha (\phi * \cI_a) \right)*  \psi_t
=
(\phi * X^\alpha \cI_a) *  \psi_t
=
(\phi *  \cI_{a,\alpha}) *  \psi_t,
$$
and it is  not difficult to check that 
$a\mapsto (\phi *  \cI_{a,\alpha}) *  \psi_t$
is holomorphic on $\{0<\Re a<Q-\alpha\}$
and continuous on $\{\Re a=Q-\alpha\}$.
Therefore, we have obtained
$$
(\cR_p^{-\frac {[\alpha]}\nu} \phi) * \tilde X^\alpha \psi_t
=
(\phi *  \cI_{[\alpha],\alpha}) *  \psi_t
.
$$
Letting $t\to 0$, we obtain that 
$X^\alpha \cR_p^{-\frac {[\alpha]}\nu}$
coincide with the convolution operator with 
the right-convolution kernel 
$\cI_{[\alpha],\alpha}$,
therefore it is an operator of type 0.
This is so for any $\alpha\in \bN_o^n$.
Consequently 
since any left-invariant $\nu_T$-homogeneous differential operator $T$ on $G$
is  a linear combination of $X^\alpha$ with $[\alpha]=\nu_T$,
 $T \cR_p^{-\frac {[\alpha]}\nu} $ also admits a kernel of type 0.
Necessarily it is also the case for its dual operator 
$\bar \cR_{p'}^{-\frac {[\alpha]}\nu} T^t$. 
This shows the first part of the statement for $\cR_p^{-\frac {[\alpha]}\nu} T$.

Now let us apply this to the operator $\cR^N T$ for $N\in \bN_0$:
the  operator
$\cR^N T \cR_p^{-\frac{\nu_T} \nu -N}$ 
extends to an $L^p(G)$-bounded operator for every $p\in (1,\infty)$.
Since $\cR_p^{N+ \frac {\nu_T}\nu}$ is injective, 
we obtain
$$
\forall \psi \in \cS(G) \qquad
\|\cR^N T \psi \|_{L^p(G)} 
\leq C_N 
\|\cR_p^{\frac{\nu_T} \nu +N} \psi \|_{L^p(G)}.
$$
Consequently
$$
\|T \psi \|_p
+
\|\cR^N T \psi \|_p
\leq 
C_0 
\| \cR_p^{\frac{\nu_T} \nu} \psi \|_p +
C_N 
\|\cR_p^N \cR_p^{\frac{\nu_T} \nu}  \psi \|_p.
$$
By Theorem \ref{thm_sobolev_spaces}, Part (3),
the left-hand side is equivalent to the Sobolev norm of $T\psi$
 in $L^p_{\nu N}(G)$
whereas the following shows that 
 the right-hand side is equivalent to the Sobolev norm of $\psi$
 in $L^p_{\nu_T+\nu N}(G)$.
 Indeed, we have by Theorem \ref{thm_fractional_power}, Part (1), that
$$
\| \cR_p^{\frac{\nu_T} \nu} \psi \|_p 
\leq C
\|  \psi \|_p^{1-\theta}
\| \cR_p^{\frac{\nu_T} \nu +N} \psi \|_p^\theta
\leq C \max 
\left(\|  \psi \|_p,\| \cR_p^{\frac{\nu_T} \nu +N} \psi \|_p\right),
$$
where $\theta=(\frac{\nu_T} \nu)/(\frac{\nu_T} \nu +N)$ since 
$a^{1-\theta}b^\theta\leq \max (a,b)$ for every $a,b\geq 0$ and $\theta \in [0,1]$.

Therefore $T$ is continuous from $L^p_{\nu N + \nu_T}(G)$ to
$L^p_{\nu N }(G)$.
By interpolation (see Theorem \ref{thm_sobolev_spaces_interpolation}), 
it is also continuous 
from $L^p_{s + \nu_T}(G)$ to
$L^p_{s}(G)$
for every $s\geq 0$.
Again by duality, see Corollary \ref{cor_sobolev_spaces_duality},
this shows that this is also true for $s\leq0$.

Since $T$ is continuous from $L^p_{s }(G)$ to
$L^p_{s-\nu_T}(G)$ for $s>\nu_T$, 
there exists $C=C_{s,T}>0$ such that for any $\phi\in \cS(G)$,
$$
\|T\phi\|_p + \|\cR^{\frac {s-\nu_T} \nu} T\phi\|_p \leq C
\left(\|\phi\|_p + \|\cR^{\frac s  \nu} \phi\|_p\right).
$$
In particular applying this to $\phi\circ D_r$ for $r>0$, we obtain after simplification:
$$
r^{\nu_T}\|T\phi\|_p + r^s \|\cR^{\frac {s-\nu_T} \nu} T\phi\|_p 
\leq C
\left(\|\phi\|_p + r^s \|\cR^{\frac s  \nu} \phi\|_p\right).
$$
Since this is true for any $r>0$, by dividing by $r^s$ and letting $r\to\infty$, 
we obtain
$$
 \|\cR^{\frac {s-\nu_T} \nu} T\phi\|_p 
\leq C
 \|\cR^{\frac s  \nu} \phi\|_p.
$$
This concludes the proof of Theorem \ref{thm_TR_RT_kerneloftype0}.
\end{proof}

\subsection{Independence with respect to Rockland operators, and integer orders}

In this Section, 
we show that the Sobolev spaces do not depend on a particular choice of a Rockland operator. 
Consequently Theorems \ref{thm_sobolev_spaces} and \ref{thm_sobolev_spaces_interpolation}, Corollary \ref{cor_sobolev_spaces_cD_dense}, 
and Proposition \ref{prop_construction_fcn_sobolov_spaces}
hold independently of any chosen Rockland operator $\cR$.

We will need the following property:
\begin{lemma}
\label{lem_Lpnuell}
Let $\cR$ be a Rockland operator on $G$ of homogeneous degree $\nu$
and let $\ell\in \bN_0$, $p\in (1,\infty)$.
Then the space
 $L^p_{\nu\ell}(G)$ is the collection of functions 
$f\in L^p(G)$ such that $X^\alpha f\in L^p(G)$ for any $\alpha\in \bN_0^n$ with $[\alpha]=\nu\ell$.
Moreover
the map $\phi\mapsto \|\phi\|_p+\sum_{[\alpha]=\nu \ell} \|X^\alpha \phi\|_p$ is a norm on $L^p_{\nu \ell}(G)$ which is equivalent to the Sobolev norm.
\end{lemma}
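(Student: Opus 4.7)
The plan is to show that the space $W := \{f\in L^p(G) : X^\alpha f \in L^p(G) \text{ whenever } [\alpha]=\nu\ell\}$, equipped with the norm $\|f\|_W := \|f\|_p + \sum_{[\alpha]=\nu\ell}\|X^\alpha f\|_p$, coincides with $L^p_{\nu\ell}(G)$ and carries an equivalent norm. The case $\ell=0$ is trivial by Theorem~\ref{thm_sobolev_spaces}~\eqref{item_thm_sobolev_spaces_s=0}, so I assume $\ell\geq 1$.

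For the inclusion $L^p_{\nu\ell}(G)\subseteq W$, I would invoke Theorem~\ref{thm_TR_RT_kerneloftype0}: each $X^\alpha$ with $[\alpha]=\nu\ell$ is a left-invariant differential operator homogeneous of degree $\nu\ell$, and therefore maps $L^p_{\nu\ell}(G)$ continuously into $L^p_0(G) = L^p(G)$. Combined with the continuous inclusion $L^p_{\nu\ell}(G)\subseteq L^p(G)$ from Theorem~\ref{thm_sobolev_spaces}~\eqref{item_thm_sobolev_spaces_inclusions}, this yields the bound $\|f\|_W \leq C\|f\|_{L^p_{\nu\ell}(G)}$ for some constant $C$.

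For the reverse inclusion, the key observation is that $\cR^\ell$ is a left-invariant differential operator homogeneous of degree $\nu\ell$, so it can be expanded in the Poincar\'e--Birkhoff--Witt basis as $\cR^\ell = \sum_{[\alpha]=\nu\ell} c_\alpha X^\alpha$ for some constants $c_\alpha \in\bC$; here I use that the gradation $[\fg_{\ell_1},\fg_{\ell_2}]\subseteq\fg_{\ell_1+\ell_2}$ is compatible with weights, so the weight is preserved when rewriting monomials into standard order via commutators. Consequently, for $f \in W$ the distribution $\cR^\ell f = \sum_\alpha c_\alpha X^\alpha f$ lies in $L^p(G)$. Since $\cR^\ell$ is itself a positive Rockland operator (Section~\ref{subsec_pos_R_op}), Theorem~\ref{thm_cRp}~(iv) applied to $\cR^\ell$ places $f$ in $\dom((\cR^\ell)_p)$. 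To finish I establish the identification $(\cR^\ell)_p = \cR_p^\ell$ as follows: by Theorem~\ref{thm_fractional_power}~\eqref{item_thm_fractional_power_common_cR_id+cR} both operators agree with the differential operator $\cR^\ell$ on $\cD(G)$; by Corollary~\ref{cor_sobolev_spaces_cD_dense} together with Theorem~\ref{thm_sobolev_spaces}~\eqref{item_thm_sobolev_spaces_s>0}, $\cD(G)$ is dense in $\dom(\cR_p^\ell) = L^p_{\nu\ell}(G)$ in the graph norm of $\cR_p^\ell$, so $\cD(G)$ is a core for $\cR_p^\ell$; and by Theorem~\ref{thm_cRp}~(v) applied to the Rockland operator $\cR^\ell$, the closure of $\cR^\ell|_{\cD(G)}$ on $L^p(G)$ is $(\cR^\ell)_p$. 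The two closed operators therefore coincide, and we conclude $f \in \dom(\cR_p^\ell) = L^p_{\nu\ell}(G)$.

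The norm equivalence then follows at once from Theorem~\ref{thm_sobolev_spaces}~\eqref{item_thm_sobolev_spaces_s>0}, which provides $\|f\|_{L^p_{\nu\ell}(G)} \asymp \|f\|_p + \|\cR^\ell_p f\|_p$, combined with the identity $\cR_p^\ell f = \sum_\alpha c_\alpha X^\alpha f$ that yields $\|\cR^\ell_p f\|_p \leq \sum_\alpha|c_\alpha|\|X^\alpha f\|_p$. The main obstacle is the identification $(\cR^\ell)_p = \cR_p^\ell$: a priori the generator on $L^p(G)$ of the heat semigroup of $\cR^\ell$ is distinct from the $\ell$-th functional-calculus power of the generator of the heat semigroup of $\cR$, and it is precisely the density of $\cD(G)$ in $L^p_{\nu\ell}(G)$ that lets us identify them via a common core.
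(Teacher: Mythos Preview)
Your proof is correct and follows essentially the same approach as the paper: both directions rest on Theorem~\ref{thm_TR_RT_kerneloftype0} (for $L^p_{\nu\ell}\subseteq W$) and on the expansion $\cR^\ell=\sum_{[\alpha]=\nu\ell}c_\alpha X^\alpha$ (for the reverse). The paper is more terse, establishing the two norm inequalities only for $\phi\in\cS(G)$ and then declaring the lemma follows; your added step identifying $(\cR^\ell)_p=\cR_p^\ell$ via a common core is a legitimate way to make the set equality $W=L^p_{\nu\ell}(G)$ explicit rather than leaving it to an implicit density/completion argument.
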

\begin{proof}[Proof of Lemma \ref{lem_Lpnuell}]
Writing $\cR^\ell=\sum_{[\alpha]=\ell\nu } c_{\alpha,\ell} X^\alpha$
we have on one hand, 
$$
\exists C>0\quad \forall \phi\in \cS(G)\qquad
\|\cR^\ell \phi\|_p \leq \max |c_\alpha| \sum_{[\alpha]=\ell\nu }  \|X^\alpha \phi\|_p.
$$
Adding $\|\phi\|_{L^p}$ on both sides of this inequality
 implies
by Theorem \ref{thm_sobolev_spaces}, part
\eqref{item_thm_sobolev_spaces_s>0}, that
$$
\exists C>0\quad \forall \phi\in \cS(G)\qquad
\|\phi\|_{L^p_{[\alpha]}}
\leq C\left( \|\phi\|_{L^p}+
\sum_{[\alpha]=\ell\nu } \|X^\alpha \phi\|_p\right).
$$
On the other hand, by Theorem \ref{thm_TR_RT_kerneloftype0},
for any $\alpha\in \bN_0^n$, 
the operator $X^\alpha$ maps continuously $L^p_{[\alpha]}(G)$ to $L^p(G)$,
hence
$$
\exists C>0\quad \forall \phi\in \cS(G)\qquad
\sum_{[\alpha]=\ell\nu } \|X^\alpha \phi\|_p \leq C\|\phi\|_{L^p_{[\alpha]}}.
$$
Lemma \ref{lem_Lpnuell} follows from these estimates.
\end{proof}

One may wonder whether Lemma \ref{lem_Lpnuell}
would be true not only for integer exponents 
of the form $s=\nu\ell$ but for any integer $s$.
In fact other Sobolev spaces on a graded Lie group
were defined by Goodman in \cite[Sec. III. 5.4]{goodman_LNM} 
following this idea. See Section \ref{subsec_comparison}.

We can now show the main result of this section, that is, 
that the Sobolev spaces on graded groups are independent of the chosen positive Rockland operators.

\begin{theorem}
\label{thm_Lps_indep_cR}
For each $p\in (1,\infty)$,
the $L^p$-Sobolev spaces on $G$  
associated with any positive Rockland operators coincide.
Moreover  the Sobolev norms associated to two positive Rockland operators are equivalent.  
\end{theorem}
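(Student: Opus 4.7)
The plan is to reduce the general case to integer exponents of a well-chosen form via interpolation and duality, and then invoke Lemma \ref{lem_Lpnuell}, whose description of the Sobolev space on those special exponents makes no reference to the particular Rockland operator chosen.

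First, let $\cR$ and $\cQ$ be two positive Rockland operators on $G$ with homogeneous degrees $\nu_\cR$ and $\nu_\cQ$, and pick any common positive integer multiple $N = \ell\,\mathrm{lcm}(\nu_\cR,\nu_\cQ)$, $\ell \in \bN$. Applying Lemma \ref{lem_Lpnuell} to $\cR$ with $\ell_\cR = N/\nu_\cR$ and to $\cQ$ with $\ell_\cQ = N/\nu_\cQ$ shows that $L^p_{N,\cR}(G)$ and $L^p_{N,\cQ}(G)$ both coincide as subspaces of $L^p(G)$ with $\{f\in L^p(G) : X^\alpha f\in L^p(G) \ \forall [\alpha]=N\}$, and that in both cases the Sobolev norm is equivalent to $\|f\|_p + \sum_{[\alpha]=N}\|X^\alpha f\|_p$. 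In particular the identity map $I$ is bicontinuous from $L^p_{N,\cR}(G)$ to $L^p_{N,\cQ}(G)$.

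Next, for $s\in[0,N]$ one interpolates. By Theorem \ref{thm_sobolev_spaces} part \eqref{item_thm_sobolev_spaces_s=0}, $L^p_{0,\cR}(G) = L^p(G) = L^p_{0,\cQ}(G)$ with equal norms. Apply Theorem \ref{thm_sobolev_spaces_interpolation} to the identity operator $T = I$ (with $G=F$, $p_0=p_1=q_0=q_1=p$, $a_0=b_0=0$, $a_1=b_1=N$, source spaces defined via $\cR$ and target spaces via $\cQ$): since $I$ is bounded at both endpoints, it extends to a bounded operator $L^p_{s,\cR}(G) \to L^p_{s,\cQ}(G)$ for every $s \in [0,N]$. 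Reversing the roles of $\cR$ and $\cQ$ gives the reverse continuity, hence equivalence of norms for $s\in[0,N]$. Since $N$ can be chosen arbitrarily large in the set of common multiples, the equivalence holds for every $s\geq 0$.

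Finally, for $s<0$ one dualizes. The operators $\bar\cR$ and $\bar\cQ$ are again positive Rockland operators (Subsection \ref{subsec_pos_R_op}), so by the previous paragraph $L^{p'}_{-s,\bar\cR}(G) = L^{p'}_{-s,\bar\cQ}(G)$ with equivalent norms. Corollary \ref{cor_sobolev_spaces_duality} identifies $(L^p_{s,\cR}(G))^* \cong L^{p'}_{-s,\bar\cR}(G)$ and $(L^p_{s,\cQ}(G))^* \cong L^{p'}_{-s,\bar\cQ}(G)$ through the distributional pairing, so passing to duals yields the equivalence $L^p_{s,\cR}(G) = L^p_{s,\cQ}(G)$ for $s<0$ as well.

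The main obstacle is the first step: without Lemma \ref{lem_Lpnuell} there would be no intrinsic, $\cR$-independent description of any Sobolev norm to anchor the argument. Granted that lemma, the remainder is a standard interpolation-plus-duality scheme, and the only care required is to verify that the identity map genuinely fits the hypotheses of Theorem \ref{thm_sobolev_spaces_interpolation} (it does, as $L^p_{N,\cR}(G)+L^p_{0,\cR}(G)=L^p(G)$ maps into the locally integrable functions on $G$).
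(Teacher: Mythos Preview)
Your proof is correct and follows essentially the same approach as the paper's own proof: anchor at integer exponents that are common multiples of the two homogeneity degrees via Lemma \ref{lem_Lpnuell}, interpolate for $s\geq 0$ using Theorem \ref{thm_sobolev_spaces_interpolation}, and then dualize via Corollary \ref{cor_sobolev_spaces_duality} for $s<0$. The only cosmetic difference is that the paper takes the product $\nu_\cR\nu_\cQ$ as the common multiple rather than $\mathrm{lcm}(\nu_\cR,\nu_\cQ)$, and phrases this by passing to the powers $\cR^{\nu_\cQ}$ and $\cQ^{\nu_\cR}$.
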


\begin{proof}[Proof of Theorem \ref{thm_Lps_indep_cR}]
Let $\cR_1$ and $\cR_2$ be two positive Rockland operators on $G$ of homogeneous degree $\nu_1$ and $\nu_2$, respectively.
Then $\cR_1^{\nu_2}$ and $\cR_2^{\nu_1}$ are two positive Rockland operators
 with the same homogeneous degree $\nu=\nu_1\nu_2$.
Their associated Sobolev spaces of exponent $\nu\ell=\nu_1\nu_2\ell$
for any $\ell\in \bN_0$ coincide
and have equivalent norms by Lemma \ref{lem_Lpnuell}.
By interpolation (see Theorem \ref{thm_sobolev_spaces_interpolation}), 
this is true for any Sobolev spaces of exponent $s\geq 0$,
and by duality for any exponent $s\in \bR$. 
\end{proof}

\begin{corollary}
\label{cor_thm_Lps_indep_cR}
Let $\cR^{(1)}$ and $\cR^{(2)}$ be two positive Rockland operators 
on  $G$ with degrees of homogeneity $\nu_1$ and $\nu_2$.
Then for any $s\in \bR$, 
the operator $(\id+\cR^{(1)})^{\frac s {\nu_1}}(\id+\cR^{(2)})^{-\frac s {\nu_2}}$
extends boundedly on $L^p(G)$, $p\in (1,\infty)$.
\end{corollary}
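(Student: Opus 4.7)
The plan is to read the corollary as the combination of two standard facts: each operator $(\id+\cR^{(i)})^{\pm s/\nu_i}$ implements, by the very definition of the Sobolev norm, an isometric isomorphism between $L^p(G)$ and the Sobolev space $L^p_{s,\cR^{(i)}}(G)$; and Theorem~\ref{thm_Lps_indep_cR} asserts that $L^p_{s,\cR^{(1)}}(G)=L^p_{s,\cR^{(2)}}(G)$ with equivalent norms. Composing these three bounded maps yields the desired $L^p$-boundedness.

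Concretely, I would argue by density, working with $\phi\in\cS(G)$. Set $\psi:=(\id+\cR^{(2)})^{-s/\nu_2}\phi$. For $s\geq 0$, Corollary~\ref{cor_Riesz_Bessel}(ii) (or Lemma~\ref{lem_cB_S}(i)) gives $\psi=\phi*\cB_s\in \cS(G)\subset L^p_{s,\cR^{(2)}}(G)$, while for $s<0$ one uses Theorem~\ref{thm_fractional_power}\eqref{item_thm_fractional_domain_id+L_L} to see that $\psi\in L^p_{s,\cR^{(2)}}(G)$ (its image under $(\id+\cR^{(2)})^{s/\nu_2}$ is $\phi\in L^p(G)$, using $(\cA_p^a)^{-1}=\cA_p^{-a}$ from Theorem~\ref{thm_fractional_power}\eqref{item_thm_fractional_power_common_cR_id+cR}).

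Next, Theorem~\ref{thm_Lps_indep_cR} supplies a constant $C=C(p,s,\cR^{(1)},\cR^{(2)})$ such that
\[
\|\psi\|_{L^p_{s,\cR^{(1)}}(G)}\ \leq\ C\,\|\psi\|_{L^p_{s,\cR^{(2)}}(G)}.
\]
Unwrapping the definitions of the two Sobolev norms and using $(\id+\cR^{(2)})^{s/\nu_2}\psi=\phi$, this reads
\[
\bigl\|(\id+\cR^{(1)})^{s/\nu_1}(\id+\cR^{(2)})^{-s/\nu_2}\phi\bigr\|_{L^p(G)}\ =\ \|\psi\|_{L^p_{s,\cR^{(1)}}(G)}\ \leq\ C\,\|\phi\|_{L^p(G)}.
\]
Since $\cS(G)$ is dense in $L^p(G)$ for $p\in(1,\infty)$, the operator extends by continuity to a bounded operator on $L^p(G)$.

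The only step that requires some care is the one I isolated above, namely ensuring that $\psi$ actually lies in $L^p_{s,\cR^{(1)}}(G)$ before invoking the norm equivalence, rather than merely in a larger distributional space. This is exactly the content of Theorem~\ref{thm_Lps_indep_cR}: the Sobolev spaces coincide as subsets of $\cS'(G)$, not just after completion of Schwartz functions in two a priori different topologies. Once this identification is in place, the rest is purely formal.
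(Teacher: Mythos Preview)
Your proof is correct and follows essentially the same approach as the paper's one-line argument: view $(\id+\cR^{(2)})^{-s/\nu_2}$ as a bounded map from $L^p(G)$ to $L^p_s(G)$, then measure the target in the equivalent $\cR^{(1)}$-Sobolev norm supplied by Theorem~\ref{thm_Lps_indep_cR}. Your version simply spells out the density argument and the case distinction $s\gtrless 0$ that the paper leaves implicit.
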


\begin{proof}[Proof of Corollary \ref{cor_thm_Lps_indep_cR}]
We view the operator $(\id+\cR_p^{(2)})^{-\frac a{\nu_2}}$
as a bounded operator from $L^p(G)$ to $L^p_a(G)$
and use the norm $f\mapsto \|(\id+\cR_p^{(1)})^{\frac a {\nu_1}} f\|_p$ on $L^p_a(G)$.
\end{proof}

Thanks to Theorem \ref{thm_Lps_indep_cR},
we can now improve our duality result given in Corollary \ref{cor_sobolev_spaces_duality}:
\begin{proposition}
\label{prop_sobolev_spaces_duality}
 Let $L^p_s(G)$, $p\in [1,\infty)\cup\{\infty_o\}$ and $s\in \bR$,  be the Sobolev spaces on a graded group $G$.
 
For any $s\in \bR$ and $p\in [1,\infty)$,
the dual space of $L^p_{s}(G)$ is isomorphic to $L^{p'}_{-s}(G)$ 
via the distributional duality,
where 
$p'$ is the conjugate exponent of $p$ if $p\in (1,\infty)$, 
and $p'=\infty_o$ if $p=1$.

For any $s\leq 0$ and $p=\infty_o$,
the dual space of $L^{\infty_o}_{s}(G)$ is isomorphic to $L^1_{-s}(G)$ 
via the distributional duality.

If $p\in (1,\infty)$ then the Banach space $L^p_{s}(G)$ is reflexive.
It is also the case for $s\leq 0$ and $p=\infty_o$, and for $s\geq 0$ and $p=1$.
\end{proposition}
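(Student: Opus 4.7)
The proof has three assertions to handle (duality for $p\in(1,\infty)$, endpoint duality, and reflexivity), and my plan is to bootstrap from what we already have rather than redo any hard analysis.

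First, the case $p\in (1,\infty)$ is essentially immediate. Corollary \ref{cor_sobolev_spaces_duality} identifies $(L^p_{s,\cR}(G))^*\cong L^{p'}_{-s,\bar\cR}(G)$ via the distributional pairing. Since $\bar\cR$ is itself a positive Rockland operator (as noted in Subsection \ref{subsec_pos_R_op}), Theorem \ref{thm_Lps_indep_cR} gives $L^{p'}_{-s,\bar\cR}(G)=L^{p'}_{-s}(G)$ with equivalent norms, and the first claim follows.

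For the endpoint cases, the key observation is that by the very definition of the Sobolev norm, the operator $(\id+\cR_p)^{s/\nu}$ is an isometric isomorphism $L^p_s(G)\to L^p_0(G)$ for each $p\in[1,\infty)\cup\{\infty_o\}$. Transferring a bounded functional $\Phi$ on $L^1_s$ via this isomorphism yields a bounded functional on $L^1$, i.e.\ an $L^\infty$ function $\psi$; writing $\Phi(\phi)=\langle\psi,(\id+\cR_1)^{s/\nu}\phi\rangle$ and using the formal self-adjointness of $\cR$ (Theorem \ref{thm_fractional_power}(1) duality statement) we obtain $\Phi(\phi)=\langle g,\phi\rangle$ for a tempered distribution $g=(\id+\bar\cR_{\infty_o})^{s/\nu}\psi$. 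The characterization in Theorem \ref{thm_sobolev_spaces}\eqref{item_thm_sobolev_spaces_distrib} is precisely what is needed to conclude $g\in L^{\infty_o}_{-s}(G)$, with matching norms, so $(L^1_s)^*\cong L^{\infty_o}_{-s}$. The dual case $p=\infty_o$, $s\leq 0$ (so $-s\geq 0$) is treated symmetrically via the isomorphism $(\id+\cR_{\infty_o})^{s/\nu}:L^{\infty_o}_s\to C_o$, together with the companion characterization in Theorem \ref{thm_sobolev_spaces}\eqref{item_thm_sobolev_spaces_distrib} for $L^1_{-s}$ (bounded functional on $C_o$ realized by an integrable function).

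Having established the duality statements, reflexivity follows by iteration. For $p\in(1,\infty)$, $(L^p_s)^{**}\cong (L^{p'}_{-s})^*\cong L^p_s$ since the same theorem applies at $p'\in(1,\infty)$. For $p=\infty_o$ and $s\leq 0$, $(L^{\infty_o}_s)^*\cong L^1_{-s}$ by what we just proved, and then $(L^1_{-s})^*\cong L^{\infty_o}_{-(-s)}=L^{\infty_o}_s$ by the $p=1$ duality (applied at the exponent $-s\geq 0$); composing the two identifications returns the canonical embedding into the bidual. The case $p=1$, $s\geq 0$ is analogous.

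The main obstacle is the identification at the endpoints: the generic bounded functional on $L^1$ lives in $L^\infty$, so one must check that after transport through $(\id+\cR_1)^{s/\nu}$ the representing distribution actually falls in the smaller space $L^{\infty_o}_{-s}$ (and dually, that a functional on $C_o$ is realized by an \emph{integrable} function rather than merely a finite measure). Both of these refinements are exactly the content of Theorem \ref{thm_sobolev_spaces}\eqref{item_thm_sobolev_spaces_distrib}, which is why it is essential to route the proof through that characterization rather than through the abstract Riesz representation theorem.
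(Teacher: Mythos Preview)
The paper does not give an explicit proof of this proposition; the sentence preceding it indicates only that Theorem \ref{thm_Lps_indep_cR} lets one remove the dependence on $\bar\cR$ from Corollary \ref{cor_sobolev_spaces_duality}. Your treatment of the range $p\in(1,\infty)$ is exactly this, and it is correct; the reflexivity for $p\in(1,\infty)$ then follows by iterating the duality, as you say.

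Your endpoint argument, however, has a genuine gap. Take $p=1$: transporting a functional $\Phi\in(L^1_s)^*$ through the isometry $(\id+\cR_1)^{s/\nu}$ produces $\psi\in L^\infty(G)$, and you then want the associated distribution $g$ to lie in $L^{\infty_o}_{-s}(G)$. You invoke Theorem \ref{thm_sobolev_spaces}\eqref{item_thm_sobolev_spaces_distrib}, but that characterization says $g\in L^{\infty_o}_{-s}$ \emph{if and only if} the resulting functional on $L^1$ is realised by integration against a function in $C_o(G)$. In your situation that function is $\psi$, which is only in $L^\infty$; nothing in your argument forces $\psi\in C_o$. So Theorem \ref{thm_sobolev_spaces}\eqref{item_thm_sobolev_spaces_distrib} is not ``precisely what is needed'' --- it is precisely the criterion that can fail. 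The same obstruction appears at $p=\infty_o$, where a bounded functional on $C_o$ is a finite measure, not an $L^1$ function in general. Indeed, already at $s=0$ the endpoint assertions read $(L^1)^*\cong C_o(G)$ and ``$L^1$ is reflexive'', both false; transporting through the isometries $L^1_s\cong L^1$ and $L^{\infty_o}_{-s}\cong C_o$ shows the difficulty persists for other $s$. What one actually obtains is that $L^{\infty_o}_{-s}$ embeds isometrically into $(L^1_s)^*$ via the distributional pairing, not that this embedding is onto. (Note also that Theorem \ref{thm_Lps_indep_cR} is stated only for $p\in(1,\infty)$, so even the identification $L^{\infty_o}_{-s,\bar\cR}=L^{\infty_o}_{-s,\cR}$ is not available from the paper's results.) You should flag the endpoint claims as requiring clarification rather than supply a proof that does not close.
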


We can also show that multiplication by a bump function 
is continuous on Sobolev spaces:
\begin{proposition}
\label{prop_mult_sob_spaces}
For any $\phi\in \cD(G)$, $p\in (1,\infty)$ and $s\in \bR$,
the operator $f \mapsto f\phi$ defined for $f\in \cS(G)$
extends continuously into a bounded map from 
$L^p_s(G)$ to itself.
\end{proposition}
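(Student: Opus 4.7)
The plan is to handle the cases $s=0$, $s=\nu\ell$ with $\ell\in\bN$, general $s\geq 0$, and $s<0$ in succession, fixing a positive Rockland operator $\cR$ of homogeneous degree $\nu$ to define the Sobolev norms.

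For $s=0$ the bound $\|\phi f\|_p\leq \|\phi\|_\infty\|f\|_p$ is immediate. For $s=\nu\ell$, Lemma~\ref{lem_Lpnuell} reduces the claim to showing that $\|X^\alpha(\phi f)\|_p$ is controlled by $\|f\|_{L^p_{\nu\ell}(G)}$ uniformly for $[\alpha]=\nu\ell$. Iterating the product rule $X_i(uv)=(X_i u)v+u(X_i v)$ along the word representing $X^\alpha$ expresses
$$
X^\alpha(\phi f)=\sum (D_1\phi)(D_2 f),
$$
where each pair $(D_1,D_2)$ arises from a choice of which letters of the word are applied to $\phi$ and which to $f$; each of $D_1,D_2$ is then a composition of left-invariant vector fields, hence a homogeneous left-invariant differential operator, with $[D_1]+[D_2]=\nu\ell$. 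Since $\phi\in\cD(G)$, the function $D_1\phi$ is smooth and compactly supported, hence bounded, while Theorem~\ref{thm_TR_RT_kerneloftype0} combined with the continuous inclusion $L^p_{\nu\ell}(G)\hookrightarrow L^p_{[D_2]}(G)$ from Theorem~\ref{thm_sobolev_spaces}~\eqref{item_thm_sobolev_spaces_inclusions} (valid since $[D_2]\leq\nu\ell$) yields $\|D_2 f\|_p\leq C\|f\|_{L^p_{\nu\ell}(G)}$. Summing the finitely many contributions gives boundedness of $f\mapsto\phi f$ on $L^p_{\nu\ell}(G)$.

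For arbitrary $s\geq 0$, I would pick $\ell\in\bN$ with $\nu\ell\geq s$ and interpolate between the bounds on $L^p_0(G)=L^p(G)$ and $L^p_{\nu\ell}(G)$ via Theorem~\ref{thm_sobolev_spaces_interpolation}. For $s<0$, observe that multiplication by $\phi$ is formally self-dual under the distributional pairing, since $\langle\phi f,\psi\rangle=\langle f,\phi\psi\rangle$ for $\psi\in\cS(G)$. Because $-s>0$, the previous step yields boundedness of $g\mapsto\phi g$ on $L^{p'}_{-s}(G)$; transposing via the isomorphism $(L^{p'}_{-s}(G))^*\cong L^p_s(G)$ furnished by Corollary~\ref{cor_sobolev_spaces_duality} together with Theorem~\ref{thm_Lps_indep_cR} then gives the desired boundedness on $L^p_s(G)$.

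The main obstacle is the non-commutative Leibniz expansion in the integer case: one must carefully justify that iterating the derivation property along a non-commutative word produces a sum of products of differential operators with matching total homogeneous degree. Once this combinatorial step is written out, the remaining arguments use only the regularity gain from Theorem~\ref{thm_TR_RT_kerneloftype0} together with the standard interpolation and duality machinery developed earlier in the paper.
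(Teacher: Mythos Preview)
Your proof is correct and follows essentially the same approach as the paper: Leibniz expansion combined with Lemma~\ref{lem_Lpnuell} for the integer case $s=\nu\ell$, then interpolation for $s\geq 0$ and duality for $s<0$. The only cosmetic difference is that the paper invokes Proposition~\ref{prop_sobolev_spaces_duality} directly for the duality step, whereas you reconstruct it from Corollary~\ref{cor_sobolev_spaces_duality} and Theorem~\ref{thm_Lps_indep_cR}---an equivalent route.
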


\begin{proof}
The Leibniz' rule for the $X_j$'s and the continuous inclusions in Theorem \ref{thm_sobolev_spaces}
\eqref{item_thm_sobolev_spaces_inclusions}
imply easily that for any fixed $\alpha\in \bN_0^n$ 
there exist a constant $C=C_{\alpha, \phi}>0$ and a constant $C'=C'_{\alpha, \phi}>0$ such that
$$
\forall f\in \cD(G)\quad
\|X^\alpha (f\phi)\|_p\leq C \sum_{[\beta]\leq [\alpha]} \|X^\beta f\|_p
\leq C' \|f\|_{L^p_{[\alpha]}(G)}.
$$
Lemma \ref{lem_Lpnuell} yields the existence of a constant $C"=C"_{\alpha, \phi}>0$
such that
$$
\forall f\in \cD(G)\quad
\|(f\phi)\|_{L^p_{\ell \nu}(G) }\leq C" \|f\|_{L^p_{\ell \nu}(G) }
$$
for any integer $\ell\in \bN_0$
and any  degree of homogeneity  $\nu$  of a Rockland operator.

This shows the statement for the case $s=\nu\ell$.
The case $s>0$ follows by interpolation (see Theorem \ref{thm_sobolev_spaces_interpolation}),
and the case $s<0$ by duality (see Proposition \ref{prop_sobolev_spaces_duality}).
\end{proof}

\subsection{Properties of $L^2_s(G)$}

The case $L^2(G)$ has some special features, 
mainly being a Hilbert space,
that we will discuss here.

\medskip

Many of the proofs in this paper
could be simplified if we had just considered the case $L^p$ with $p=2$.
For instance, 
let us consider a positive Rockland operator $\cR$
and its self-adjoint extension $\cR_2$ on $L^2(G)$.
One can define 
the fractional powers of $\cR_2$ and $\id+\cR_2$
by functional analysis.
Then one can obtain the properties of the kernels of the Riesz and Bessel potentials with similar methods as in Corollary \ref{cor_Riesz_Bessel}.

The proof of the properties of the associated Sobolev spaces $L^2_s(G)$ would be the same in this particular case, maybe slightly helped occasionally 
by the H\"older inequality being replaced by the Cauchy-Schwartz inequality.
A noticeable exception is that Lemma \ref{lem_Lpnuell}
can be obtained directly in the case $L^p$, $p=2$,
from the estimates due to Helffer and Nourrigat in \cite{helffer+nourrigat-79}.

The main difference between $L^2$ and $L^p$ Sobolev spaces
is the structure of Hilbert spaces of  $L^2_s(G)$
whereas the other Sobolev spaces $L^p_s(G)$ are `only' Banach spaces:

\begin{proposition}[Hilbert space $L^2_s$]
\label{prop_sobolev_spaces_p=2}
Let $G$ be a graded group.

For any $s\in \bR$,
$L^2_s(G)$ is a Hilbert space with inner product given by
$$
(f,g)_{L^2_s(G)} := 
\int_G  (\id+\cR_2)^{\frac s\nu} f (x)\ \overline{ (\id+\cR_2)^{\frac s\nu} g(x)} dx,
$$ 
where $\cR$ is a positive Rockland operator of homogeneous degree $\nu$.
 
If $s>0$, an equivalent inner product is
$$
(f,g)_{L^2_s(G)} := 
\int_G  
f (x)\ \overline{ g(x)} dx
\ + \
\int_G  \cR_2^{\frac s\nu} f (x)\ \overline{ \cR_2^{\frac s\nu} g(x)} dx
.
$$ 

If $s=\nu\ell$ with $\ell\in \bN_0$, an equivalent inner product is
  $$
  (f,g)= (f,g)_{L^2(G)}+
  \sum_{[\alpha]=\nu\ell} (X^\alpha f,X^\alpha g)_{L^2(G)}
  .
  $$
\end{proposition}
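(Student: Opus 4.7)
The plan is to verify that the first formula is a genuine inner product inducing the Sobolev norm, and then derive the equivalence of the other two formulas from results already established earlier in the paper.

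First I would check that $(f,g)_{L^2_s(G)} := \int_G (\id+\cR_2)^{s/\nu} f\, \overline{(\id+\cR_2)^{s/\nu} g}\, dx$ is a well-defined Hermitian inner product on $L^2_s(G)$. For $f,g\in L^2_s(G)$, the characterisation in Theorem~\ref{thm_sobolev_spaces} Parts~\eqref{item_thm_sobolev_spaces_s=0}--\eqref{item_thm_sobolev_spaces_distrib} guarantees that $(\id+\cR_2)^{s/\nu} f$ and $(\id+\cR_2)^{s/\nu} g$ lie in $L^2(G)$, so the integral is absolutely convergent by Cauchy--Schwarz. Sesquilinearity is immediate from linearity of $(\id+\cR_2)^{s/\nu}$, and positive definiteness follows from the injectivity of $(\id+\cR_2)^{s/\nu}$ (Theorem~\ref{thm_fractional_power}\eqref{item_thm_fractional_power_common_cR_id+cR}). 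The induced norm is exactly $\|f\|_{L^2_s(G)} = \|(\id+\cR_2)^{s/\nu} f\|_{L^2}$, and completeness of $L^2_s(G)$ in this norm is built into Definition~\ref{def_sobolev_space}. Hence $L^2_s(G)$ is a Hilbert space.

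For the second formula, valid when $s>0$, let $B(f,g) := (f,g)_{L^2} + (\cR_2^{s/\nu} f,\cR_2^{s/\nu} g)_{L^2}$. Both terms are defined on $L^2_s(G)$ since $\dom\left[(\id+\cR_2)^{s/\nu}\right]=\dom(\cR_2^{s/\nu})$ by Theorem~\ref{thm_fractional_power}\eqref{item_thm_fractional_domain_id+L_L}. Sesquilinearity and Hermiticity are clear, and positivity of the $(f,f)_{L^2}$ term already forces $B(f,f)=0\Rightarrow f=0$. The induced norm squared $\|f\|_{L^2}^2+\|\cR_2^{s/\nu} f\|_{L^2}^2$ is equivalent, via the elementary inequalities $\tfrac{1}{\sqrt 2}(a+b)\leq \sqrt{a^2+b^2}\leq a+b$, to $\|f\|_{L^2}+\|\cR_2^{s/\nu} f\|_{L^2}$, which by Theorem~\ref{thm_sobolev_spaces}\eqref{item_thm_sobolev_spaces_s>0} is equivalent to $\|f\|_{L^2_s(G)}$.

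For the third formula, applicable when $s=\nu\ell$ with $\ell\in\bN_0$, the analogous form
$$
B_\ell(f,g):=(f,g)_{L^2}+\sum_{[\alpha]=\nu\ell}(X^\alpha f, X^\alpha g)_{L^2}
$$
is well-defined on $L^2_{\nu\ell}(G)$ by Lemma~\ref{lem_Lpnuell} (applied with $p=2$), which asserts exactly that elements of $L^2_{\nu\ell}(G)$ have all derivatives $X^\alpha f$ of weight $[\alpha]=\nu\ell$ in $L^2(G)$. Hermiticity and positive definiteness follow as before. Its induced norm squared is $\|f\|_{L^2}^2+\sum_{[\alpha]=\nu\ell}\|X^\alpha f\|_{L^2}^2$, which by the same elementary equivalence is comparable to $\|f\|_{L^2}+\sum_{[\alpha]=\nu\ell}\|X^\alpha f\|_{L^2}$, and hence to $\|f\|_{L^2_{\nu\ell}(G)}$ by Lemma~\ref{lem_Lpnuell}.

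I do not anticipate a serious obstacle: the entire argument reduces to verifying that each candidate sesquilinear form is well-defined and positive definite on the appropriate domain, that its associated quadratic norm is equivalent to an already-established norm on $L^2_s(G)$, and that equivalence of norms is enough to conclude equivalence of the corresponding inner products. The only delicate point is ensuring the formulas make sense on all of $L^2_s(G)$ rather than just on $\cS(G)$; this is supplied respectively by Theorem~\ref{thm_sobolev_spaces}\eqref{item_thm_sobolev_spaces_s>0} and Lemma~\ref{lem_Lpnuell}.
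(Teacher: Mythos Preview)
Your proposal is correct and is precisely the kind of routine verification the paper has in mind: the paper does not give a proof at all but merely remarks that the proposition ``is easily checked, using the structure of Hilbert space of $L^2(G)$.'' Your argument supplies exactly those details, invoking Theorem~\ref{thm_sobolev_spaces}\eqref{item_thm_sobolev_spaces_s>0} and Lemma~\ref{lem_Lpnuell} for the norm equivalences and Theorem~\ref{thm_fractional_power} for injectivity and domain questions, which is the intended route.
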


Proposition \ref{prop_sobolev_spaces_p=2}
is easily checked, using the structure of Hilbert space of $L^2(G)$.

 \section{Further properties of Sobolev spaces}
 
In this section we show a Sobolev embedding theorem, 
and this will require showing that
the operators of type 0 act continuously on Sobolev spaces. 
We also compare the spaces we have constructed in the previous section
with other possible definitions of Sobolev spaces.

\subsection{Operators of type 0 acting on Sobolev spaces}
 
In this section we show that the result given in Theorem \ref{thm_op_type_Lp_bdd} for operator of type 0 can be extended to Sobolev spaces:

\begin{theorem}
\label{thm_conv_type0_sobolev}
Any operator of type $\nu_o$ with $\Re \nu_o=0$,
 extends to a bounded operator on $L^p_s(G)$ for any $p\in (1,\infty)$ and $s\in \bR$.
\end{theorem}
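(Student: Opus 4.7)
The plan is to reduce the statement to the core case $s=\nu\ell$ with $\ell\in\bN$ by interpolation and duality, and for this case to identify the conjugated operator $S_\ell:=\cR^\ell T\cR^{-\ell}$ as another operator of type $\nu_o$, so that Theorem~\ref{thm_op_type_Lp_bdd} applies again.

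For the reductions: the case $s=0$ is Theorem~\ref{thm_op_type_Lp_bdd}. For $s<0$ I would pass to the formal transpose $T^*f=f*\tilde\kappa^*$ with $\tilde\kappa^*(x)=\overline{\kappa(x^{-1})}$, which is itself an operator of type $\bar\nu_o$ with $\Re\bar\nu_o=0$; Proposition~\ref{prop_sobolev_spaces_duality} then converts $L^{p'}_{-s}$-boundedness of $T^*$ (already established) into $L^p_s$-boundedness of $T$. Complex interpolation via Theorem~\ref{thm_sobolev_spaces_interpolation} between $L^p_0$ and $L^p_{\nu\ell}$ fills the intermediate positive exponents, so everything rests on the integer case.

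For the integer case, Theorem~\ref{thm_sobolev_spaces}\eqref{item_thm_sobolev_spaces_s>0} yields $\|f\|_{L^p_{\nu\ell}}\asymp\|f\|_p+\|\cR^\ell f\|_p$, so the boundedness of $T\colon L^p_{\nu\ell}\to L^p_{\nu\ell}$ is equivalent to the $L^p$-boundedness of $S_\ell$ on a dense subspace (Corollary~\ref{cor_sobolev_spaces_cD_dense}). On Schwartz $g$, using Corollary~\ref{cor_Riesz_Bessel}(i) to write $\cR^{-\ell}g=g*\cI_{\nu\ell}$, the associativity part of Proposition~\ref{prop_conv_kernel_typnu}, and the left-invariance of $\cR^\ell$ (which makes it act on the right factor of a convolution), I would compute
\[
S_\ell g \;=\; \cR^\ell\bigl((g*\cI_{\nu\ell})*\kappa\bigr) \;=\; \cR^\ell\bigl(g*(\cI_{\nu\ell}*\kappa)\bigr) \;=\; g*\kappa_\ell, \quad\text{where }\kappa_\ell:=\cR^\ell(\cI_{\nu\ell}*\kappa).
\]
The heart of the argument is the identification of $\kappa_\ell$ as a kernel of type $\nu_o$. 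Proposition~\ref{prop_conv_kernel_typnu}(ii) first certifies $\cI_{\nu\ell}*\kappa$ as a kernel of type $\nu\ell+\nu_o$, i.e.\ a distribution smooth on $G\setminus\{0\}$ and homogeneous of degree $\nu\ell+\nu_o-Q$. Applying the differential operator $\cR^\ell$ (which is homogeneous of degree $\nu\ell$) preserves smoothness away from the origin and lowers the homogeneity degree by $\nu\ell$, so $\kappa_\ell$ is a distribution, smooth on $G\setminus\{0\}$, homogeneous of degree $\nu_o-Q$. Since $\Re(\nu_o-Q)=-Q$ and $\kappa_\ell$ is a priori a distribution, Proposition~\ref{prop_-Qhomo_distrib} forces its mean value over the unit homogeneous sphere to vanish, and hence $\kappa_\ell$ is a kernel of type $\nu_o$ in the sense of Definition~\ref{def_kernel_type_nu}. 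Theorem~\ref{thm_op_type_Lp_bdd} then gives the $L^p$-boundedness of $S_\ell$, closing the integer case.

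The main obstacle I anticipate is the hypothesis $\Re(\nu\ell+\nu_o)=\nu\ell<Q$ required by Proposition~\ref{prop_conv_kernel_typnu}(ii), which fails once $\ell>Q/\nu$. To handle large $\ell$ I would iterate: the argument above, applied to any type-$\nu_o$ operator $U$ and any $\ell'$ with $\nu\ell'<Q$, shows that $\cR^{\ell'}U\cR^{-\ell'}$ is again of type $\nu_o$; composing finitely many such conjugations realises $\cR^\ell T\cR^{-\ell}$ as a type-$\nu_o$ operator for every $\ell\in\bN$. Alternatively, Theorem~\ref{thm_Lps_indep_cR} allows one to replace $\cR$ by a positive Rockland operator of a more convenient degree, so that a single conjugation plus interpolation suffices for any prescribed target $s$. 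The delicate point is justifying the repeated associativity of convolutions among distributions of mixed types, which has to be tracked carefully within the framework of Proposition~\ref{prop_conv_kernel_typnu}.
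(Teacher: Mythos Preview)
Your reduction by interpolation and duality to the case $s=\nu\ell$, $\ell\in\bN$, is exactly what the paper does. The difference lies in how you handle that integer case, and there your approach has a genuine gap that the paper explicitly anticipates and works around.

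Your conjugation argument $S_\ell=\cR^\ell T\cR^{-\ell}$ needs $\cI_{\nu\ell}$ to exist as a kernel of type $\nu\ell$, and Proposition~\ref{prop_conv_kernel_typnu}(ii) to apply to $\cI_{\nu\ell}*\kappa$; both require $\nu\ell<Q$. You recognise this and propose to iterate with some $\ell'\geq 1$ satisfying $\nu\ell'<Q$, or to switch to a Rockland operator of smaller degree via Theorem~\ref{thm_Lps_indep_cR}. But both fixes presuppose that \emph{some} positive Rockland operator on $G$ has degree $\nu<Q$, and this can fail. The paper points this out just before the proof: on the graded (non-stratified) Heisenberg group $\tilde\bH_1$ with weights $3,5,8$ one has $Q=16$, yet no Rockland operator of degree $<16$ exists. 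On such a group your iteration never gets started, and changing $\cR$ does not help.

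The paper's way out is to avoid conjugating by $\cR^\ell$ altogether. Instead it works with the first-order vector fields $X_j$, whose degrees $\upsilon_j$ satisfy $\upsilon_j<Q$ automatically (since $Q=\upsilon_1+\cdots+\upsilon_n$ and $n\geq 2$). Concretely, the paper shows that $X_jT_\kappa=T_{\kappa_j}\cR^{\upsilon_j/\nu}$ with $\kappa_j:=X_j(\cI_{\upsilon_j}*\kappa)$ again a kernel of type $\nu_o$; this uses only $\cI_{\upsilon_j}$ with $\upsilon_j\in(0,Q)$, so the convolution calculus is always legitimate. Iterating over compositions of the $X_j$'s and invoking the $X^\alpha$-description of $L^p_{\nu\ell}$ from Lemma~\ref{lem_Lpnuell} then yields the integer-exponent boundedness without any constraint on $\nu$. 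Your argument is essentially Folland's stratified-case proof, and what is missing is precisely this replacement of $\cR$ by the individual $X_j$'s.
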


In the statement and in the proof, 
we  keep the same notation for an operator on $\cD(G)\to\cD'(G)$ and its possible bounded extensions to some Sobolev spaces
in order to ease the notation.

Before giving the proof of Theorem \ref{thm_conv_type0_sobolev}, 
let us comment on  similar results in related contexts.
In the case of $\bR^n$ (and similarly for compact Lie  groups), 
the continuity on Sobolev spaces would be easy since $T_\kappa$ would commute with the Laplace operator
but the homogeneous setting requires a more substantial argument.
On any stratified group,
Theorem \ref{thm_conv_type0_sobolev} was shown by Folland
in \cite[Theorem 4.9]{folland_75}.
However the proof in this context uses 
the existence of a positive Rockland operator
with a unique homogeneous fundamental solution, 
namely `the' (any) sub-Laplacian.
If we wanted to follow closely the same line of arguments, 
we would have to assume that the group is equipped 
with a Rockland operator of homogeneous degree $\nu$ with $\nu<Q$.
This is not always the case for a graded group
(it suffices to consider for example  the three dimensional Heisenberg group $\tilde \bH_1$ with a graded non-stratified structure defined in 
Section \ref{subsec_comparison}).
We present here a proof which is valid 
under no restriction in the graded case. 
As in the stratified case, the main problem is to check at every step that formal convolutions between different kernels make sense,
see the discussion before 
Proposition \ref{prop_conv_kernel_typnu}.

\begin{proof}[Proof of Theorem \ref{thm_conv_type0_sobolev}]
Let $\kappa$ be a kernel of type $\nu_o$ with $\Re \nu_o=0$ 
 and let $\cR$ be a positive Rockland operator of 
homogeneous degree $\nu$.
By Corollary \ref{cor_Riesz_Bessel} (i),
if $a\in (0,Q)$,
$\cI_a$ is a kernel of type $a$,
and for any $\phi\in \cS(G)$,
we have
$$
\cR_p^{\frac a\nu} \phi\in L^p(G)\cap L^{\tilde q}(G)
\quad\mbox{and}\quad\phi=(\cR_{\tilde q}^{\frac a\nu} \phi)*\cI_a
=
(\cR_p^{\frac a\nu} \phi)*\cI_a,
$$
where, for instance,  $\tilde q =\frac 12 (1+ \frac Q a) < \frac Q a$.
By Proposition~\ref{prop_conv_kernel_typnu} (ii),
 $\cI_{a}*\kappa$ is a kernel of type $a+\nu_o$
 with $\Re (a+\nu_o)=a\in (0,Q)$,
and 
$$
T_\kappa \phi=\phi *\kappa = 
\left((\cR_p^{\frac a\nu} \phi)*\cI_a\right)*\kappa
=
(\cR_p^{\frac a\nu} \phi)*\left(\cI_a*\kappa\right)
\quad
(\mbox{in some}\ L^q(G)).
$$
This implies
that  for any $j=1,\ldots,n$, 
$\kappa_j:= X_j \left(\cI_{\upsilon_j}*\kappa\right)$
is a kernel of  type $\nu_o$ and that
the following operators coincide on $\cS(G)$
$$
X_j T_\kappa= T_{\kappa_j}\cR^{\frac {\upsilon_j}\nu}.
$$
Since $T_{\kappa_j}$ is $L^p$-bounded  
(see Theorem~\ref{thm_op_type_Lp_bdd}),
we have obtained for any $j=1,\ldots,n$ and any $\phi\in \cS(G)$:
$$
\|X_j T_\kappa \phi\|_p=
\|T_{\kappa_j}\cR^{\frac {\upsilon_j}\nu}\|_p
 \leq C \|\cR^{\frac {\upsilon_j}\nu}\phi\|_p
\leq C' \|\phi\|_{L^p_{\upsilon_j}},
$$
using Theorem \ref{thm_sobolev_spaces}
\eqref{item_thm_sobolev_spaces_s>0}
for the last inequality.
Note that this yields for any two indices $j_1,j_2=1,\ldots,n$,
$$
\|X_{j_2}X_{j_1} T_\kappa \phi\|_p 
\leq 
C_1 \|X_{j_1}\phi\|_{L^p_{\upsilon_{j_2}}}
\leq 
C_2 \|\phi\|_{L^p_{\upsilon_{j_2}+\upsilon_{j_1}}}
,
$$
since $X_{j_1}$ maps $L^p_{s+\upsilon_{j_1}}$ to $L^p_s$ boundedly
(see Theorem \ref{thm_TR_RT_kerneloftype0}).
Recursively, 
writing any $X^\alpha$
as the composition of various $X_j$ yields
$$
\exists C=C_\alpha\quad\forall \phi\in \cS(G)\qquad
\|X^\alpha T_\kappa \phi\|_p 
\leq 
C \|\phi\|_{L^p_{[\alpha]}}
.
$$
This is true for any $\alpha\in \bN_0^n$,
the case $\alpha=0$ following from 
Theorem~\ref{thm_op_type_Lp_bdd}.
For each $\ell\in \bN_0$ fixed, 
we now sum  over $[\alpha]=0,\ell \nu$, to get
$$
\|T_\kappa \phi\|_p +\sum_{[\alpha]= \ell \nu}
\|X^\alpha T_\kappa \phi\|_p 
\leq 
C \left(\|\phi\|_{L^p_{0}} + \sum_{[\alpha]=\ell \nu}
\|\phi\|_{L^p_{[\alpha]}}\right)
\leq 
C' 
\|\phi\|_{L^p_{\ell\nu}}.
$$
The left hand side is equivalent to $\|T_\kappa\phi\|_{L^p_{\nu\ell}}$ by  Lemma \ref{lem_Lpnuell}.
Thus we obtain
$$
\exists C=C_\ell\quad\forall \phi\in \cS(G)\qquad
\| T_\kappa \phi\|_{L^p_{\ell\nu}}
\leq 
C \|\phi\|_{L^p_{\ell\nu}}.
$$

We have obtained that,
for any kernel $\kappa$ of type 0,
the corresponding convolution operator
  $T_\kappa$ maps continuously $L^p_s$ to itself
for any $s=\ell\nu$ with $\ell\in \bN_0$ and $p\in (1,\infty)$.
The result for any $s\in \bR$ follows 
by interpolation (see Theorem~\ref{thm_sobolev_spaces_interpolation}),
and duality (see Proposition \ref{prop_sobolev_spaces_duality}).
This concludes the proof of Theorem~\ref{thm_conv_type0_sobolev}.
\end{proof}

\subsection{Sobolev embedding theorem}

In this section, we show
the analogue of the classical fractional integration theorems of Hardy-Littlewood and Sobolev.
The main difference is that the topological dimension $n$ of $G\sim \bR^n$
is replaced by  the homogeneous dimension $Q$.
The stratified case was proved 
by Folland
in \cite{folland_75} (mainly Theorem 4.17 therein).

\begin{theorem}
\label{thm_Sobolev-ineq}
\begin{itemize}
\item[(i)] If $1<p<q<\infty$ and $a,b\in \bR$
with $b-a=Q(\frac 1p-\frac 1q)$ then
we have the following continuous inclusion:
$$
L^p_b\subset L^q_a,
$$
that is, for every $f\in L^p_b$,
we have $f\in L^q_a$
and there exists a constant $C=C_{a,b,p,q,G}>0$ independent of $f$ such that 
$$
\|f\|_{L^q_a} \leq C \|f\|_{L^p_b}.
$$
\item[(ii)] If $p\in (1,\infty)$ and $s>Q / p$
then we have the following inclusion:
$$
L^p_s \subset \left(C(G)\cap L^\infty(G)\right),
$$
in the sense that any function $f\in L^p_s(G)$ admits a bounded continuous representative (still denoted by $f$). Furthermore  
there exists a constant $C=C_{s,p,G}>0$ independent of 
$f$ such that 
$$
\|f\|_\infty \leq C \|f\|_{L^p_s(G)}.
$$
\end{itemize}
\end{theorem}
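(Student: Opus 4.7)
My plan is to handle part (i) by first reducing to the case $a=0$ and then invoking the $L^p\to L^q$ boundedness of the Riesz potential; part (ii) will follow by exhibiting $f$ as a convolution with the Bessel kernel $\cB_s$, which I will show lies in $L^{p'}(G)$ by a direct estimate, and then applying H\"older's inequality.

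For the reduction in part (i), the key observation is that, by the very definition of the Sobolev norm together with the semigroup property from Theorem \ref{thm_fractional_power}(1), the operator $(\id+\cR)^{a/\nu}$ is an isometric isomorphism of $L^p_b(G)$ onto $L^p_{b-a}(G)$: for $f\in L^p_b(G)$,
$$\|(\id+\cR)^{a/\nu}f\|_{L^p_{b-a}(G)}=\|(\id+\cR)^{(b-a)/\nu}(\id+\cR)^{a/\nu}f\|_p=\|(\id+\cR)^{b/\nu}f\|_p=\|f\|_{L^p_b(G)}.$$
Thus it suffices to prove the continuous inclusion $L^p_c(G)\subset L^q(G)$ when $c:=b-a=Q(1/p-1/q)>0$. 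Fix $\phi\in\cS(G)$; since $\phi\in L^r_c(G)$ for every $r\in(1,\infty)$, Theorem \ref{thm_sobolev_spaces}(2) together with the $p$-independence of $\cR_p^{c/\nu}\phi$ on Schwartz functions (which follows from the analyticity of $a\mapsto\cR^a_p\phi$ as in the proof of Lemma \ref{lem_cB_S}) shows that $\cR^{c/\nu}\phi$ lies in every $L^r(G)$; in particular it lies in $L^{\tilde q}(G)\cap L^p(G)$ with $1/\tilde q-1/p=c/Q$. Applying Corollary \ref{cor_Riesz_Bessel}(i) to $\cR^{c/\nu}\phi$ then yields the inversion
$$\phi=\cR^{-c/\nu}(\cR^{c/\nu}\phi)=(\cR^{c/\nu}\phi)*\cI_c.$$
Since $\cI_c$ is a kernel of type $c$, Theorem \ref{thm_op_type_Lp_bdd} produces
$$\|\phi\|_q=\|(\cR^{c/\nu}\phi)*\cI_c\|_q\leq C\|\cR^{c/\nu}\phi\|_p\leq C'\|\phi\|_{L^p_c(G)},$$
the last inequality being Theorem \ref{thm_sobolev_spaces}(2) once more. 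Density of $\cS(G)$ in $L^p_c(G)$ (Corollary \ref{cor_sobolev_spaces_cD_dense}) then concludes (i).

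For part (ii), the first step is to prove $\cB_s\in L^{p'}(G)$ whenever $s>Q/p$. Starting from the defining formula of $\cB_s$ in Corollary \ref{cor_Riesz_Bessel}(ii), applying Minkowski's integral inequality, and using the scaling $\|h_t\|_{p'}=t^{-Q/(\nu p)}\|h_1\|_{p'}$ (an immediate consequence of \eqref{eq_heat_kernel_homogeneity-1}), we obtain
$$\|\cB_s\|_{p'}\leq\frac{\|h_1\|_{p'}}{|\Gamma(s/\nu)|}\int_0^\infty t^{s/\nu-1-Q/(\nu p)}e^{-t}\,dt,$$
where $\|h_1\|_{p'}$ is finite because $h_1\in\cS(G)$, and the gamma-type integral converges precisely under the hypothesis $s>Q/p$. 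Given the representation $\phi=((\id+\cR)^{s/\nu}\phi)*\cB_s$ from Lemma \ref{lem_cB_S}(ii), H\"older's inequality gives
$$\|\phi\|_\infty\leq\|\cB_s\|_{p'}\|(\id+\cR)^{s/\nu}\phi\|_p=C\|\phi\|_{L^p_s(G)},\qquad\phi\in\cS(G).$$
For a general $f\in L^p_s(G)$, choose $\phi_n\in\cS(G)$ with $\phi_n\to f$ in $L^p_s$-norm; the inequality above shows $\{\phi_n\}$ is uniformly Cauchy, and its uniform limit is a bounded continuous function which provides the desired representative of $f$, and $\|f\|_\infty\leq C\|f\|_{L^p_s(G)}$ passes to the limit.

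The main technical obstacle sits in part (i), specifically in the verification that $(\cR^{c/\nu}\phi)*\cI_c=\phi$ for $\phi\in\cS(G)$ when $c/\nu$ is not an integer, since $\cR^{c/\nu}\phi$ itself need not be Schwartz. This reduces to confirming that $\cR^{c/\nu}\phi$ lies in the intersection $L^{\tilde q}(G)\cap L^p(G)$ required by Corollary \ref{cor_Riesz_Bessel}(i), which in turn hinges on the $p$-independence of the fractional powers of $\cR$ acting on Schwartz functions. Once the inversion identity is secured, both parts reduce to routine bounded-operator estimates: Theorem \ref{thm_op_type_Lp_bdd} for (i), and H\"older's inequality for (ii).
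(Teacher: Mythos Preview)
Your proof is correct; both parts follow valid lines that differ somewhat from the paper's.

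For part (i), the paper does \emph{not} reduce to $a=0$. Instead it fixes $b>a>0$, writes $\phi=(\cR^{(b-a)/\nu}\phi)*\cI_{b-a}$ and $\cR^{a/\nu}\phi=(\cR^{b/\nu}\phi)*\cI_{b-a}$, bounds each via Theorem~\ref{thm_op_type_Lp_bdd}, and sums to get $\|\phi\|_{L^q_a}\le C\|\phi\|_{L^p_b}$; the general case $a,b\in\bR$ is then obtained by duality (Proposition~\ref{prop_sobolev_spaces_duality}) and interpolation (Theorem~\ref{thm_sobolev_spaces_interpolation}). Your reduction via the isometry $(\id+\cR)^{a/\nu}:L^r_b\to L^r_{b-a}$ (which, crucially, is $r$-independent on Schwartz functions by Lemma~\ref{lem_cB_S}(ii)) handles all $a,b$ at once and dispenses with the duality/interpolation step. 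The price is that you must be slightly careful that $(\id+\cR_p)^{a/\nu}\phi=(\id+\cR_q)^{a/\nu}\phi$ on $\cS(G)$, but you note this. Both approaches share the same core analytic input, namely the inversion $\phi=(\cR^{c/\nu}\phi)*\cI_c$ and Theorem~\ref{thm_op_type_Lp_bdd}; the paper glosses over the $p$-independence of $\cR^{c/\nu}\phi$ that you flag as the ``technical obstacle'', so your discussion there is more careful, not less.

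For part (ii), the paper simply asserts $\cB_s\in L^{p'}(G)$ ``by Corollary~\ref{cor_Riesz_Bessel}(ii)'', although that corollary explicitly treats only $L^1$ and $L^2$. Your Minkowski argument with $\|h_t\|_{p'}=t^{-Q/(\nu p)}\|h_1\|_{p'}$ supplies exactly the missing computation and shows transparently why the threshold $s>Q/p$ appears. The paper then proves continuity of the representative by writing $|f(x)-f(x')|\le\|\cB_s\|_{p'}\|f_s(x\,\cdot)-f_s(x'\,\cdot)\|_p$ and invoking continuity of translation on $L^p$, whereas you obtain it from uniform Cauchy convergence of Schwartz approximants; both are standard and equivalent.
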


\begin{proof}
Let us prove Part (i). 
Note that, under the condition of the statement,  
$b-a\in (0,Q)$ and that the relation between $p$ and $q$ is 
the one giving the $L^p\to L^q$-continuity of operator of type $b-a$
by Theorem \ref{thm_op_type_Lp_bdd}.

We fix a positive Rockland operator $\cR$ of homogeneous degree $\nu$
and we assume that $b,a>0$ and $p,q\in (1,\infty)$
satisfy $b-a=Q(\frac 1p-\frac 1q)$.

By Corollary \ref{cor_Riesz_Bessel} (i), 
$\cI_{b-a}$ is a kernel of type $b-a$
and for any $p_1\in (1,\infty)$ and  $\phi\in \cS(G)$, 
$\cR_{p_1} ^{\frac {b-a}\nu} \phi \in L^{p_1}_b$ and 
$\phi=(\cR_{p_1}^{\frac {b-a}\nu} \phi) *\cI_{b-a}$.
By Theorem \ref{thm_op_type_Lp_bdd},
this implies with $p_1=p$,
$$
\|\phi\|_q \leq C\|\cR_{p}^{\frac {b-a}\nu} \phi\|_{L^p}.
$$

For the same reason we also have $\cR_q^{\frac a\nu} \phi=\cR_p^{\frac b\nu} *\cI_{b-a}$ and 
$$
\|\cR_q^{\frac a\nu}\phi\|_q \leq C\|\cR_{p}^{\frac {b}\nu} \phi\|_{L^p}.
$$

Adding the two estimates above, we obtain
$$
\|\phi\|_q+\|\cR_q^{\frac a\nu}\phi\|_q 
\leq C
\left(\|\cR_{p}^{\frac {b-a}\nu} \phi\|_{L^p}+
\|\cR_{p}^{\frac {b}\nu} \phi\|_{L^p}\right).
$$
Since $b$, $a$, and  $b-a$ are positive, 
by  Theorem \ref{thm_sobolev_spaces}
\eqref{item_thm_sobolev_spaces_inclusions},
the left-hand side is equivalent to 
$\|\phi\|_{L^q_a}$
and both terms in the right-hand side are $\leq C\|\phi\|_{L^q_b}$.
Therefore we have obtained:
$$
\exists C=C_{a,b,p,q,\cR} \quad\forall \phi\in\cS(G)\qquad
\|\phi\|_{L^q_a}
\leq C
\|\phi\|_{L^q_b}.
$$
By density of $\cS(G)$ in the Sobolev spaces, 
this shows Part (i) for $b>a>0$.
The result  for any $a,b$ follows
by duality and interpolation (see Proposition \ref{prop_sobolev_spaces_duality} and Theorem \ref{thm_sobolev_spaces_interpolation}).
The proof of Part (i) is now complete.

Let us prove Part (ii).
Let $p\in (1,\infty)$ and $s>Q/p$.
By Corollary \ref{cor_Riesz_Bessel} (ii),
we know $\cB_s \in L^1(G) \cap L^{p'}(G)$,
where $p'$ is the conjugate exponent of $p$.
For any $f\in L^p_s(G)$,
we have $f_s:=(\id+\cR_p)^{\frac s\nu} f\in L^p$
and
$$
f=
(\id+\cR_p)^{-\frac s\nu} f_s
=
f_s*\cB_s.
$$
Therefore by H\"older's inequality,
$$
\|f\|_\infty \leq \|f_s\|_p \|\cB_s\|_{p'}
=\|\cB_s\|_{p'} \|f\|_{L^p_s}.
$$
Moreover for almost every $x$, we have
$$
f(x)=\int_G f_s(y) \cB_s(y^{-1}x)dy
=\int_G f_s(x z^{-1}) \cB_s(z) dz
.
$$
Thus for almost every $x,x'$, we have
\begin{eqnarray*}
|f(x)-f(x')|
&=&
\left|\int_G \left( f_s(x z^{-1}) -  f_s(x' z^{-1}) \right)\cB_s(z) dz \right|
\\
&\leq&
\|\cB_s\|_{p'} \|f_s(x\,\cdot)-f_s(x'\,\cdot)\|_{p}.
\end{eqnarray*}
Translation is continuous on $L^p(G)$, 
thus as $x'\to x$,
$ \lim \|f_s(x\,\cdot)-f_s(x'\,\cdot)\|_{L^p(G)}=0$
and consequently $|f(x)-f(x')|\longrightarrow 0$
almost surely.
Hence we can modify $f$ so that it becomes a continuous function.
This concludes the proof.
\end{proof}

From the Sobolev embedding theorem (Theorem \ref{thm_Sobolev-ineq} (ii)) and the description of Sobolev spaces with integer exponent (Lemma \ref{lem_Lpnuell}) follows easily the following property:
\begin{corollary}
\label{cor_thm_Sobolev_ineq}
Let $G$ be a graded group, $p\in (1,\infty)$ and $s\in \bN$.
We assume that $s$ is proportional to the homogeneous degree $\nu$ of  a positive Rockland operator, that is,  $\frac s\nu \in \bN$, and that $s>Q/p$.

Then if $f$ is a distribution on $G$ such that $f\in L^p(G)$
and $X^\alpha f\in L^p(G)$ when  $\alpha\in \bN_0^n$ satisfies $[\alpha]=s$, 
then $f$ admits a bounded continuous representative (still denoted by $f$). Furthermore  
there exists a constant $C=C_{s,p,G}>0$ independent of 
$f$ such that 
$$
\|f\|_\infty \leq C \left(\|f\|_p+\sum_{[\alpha]=s} \|X^\alpha f\|_p\right).
$$
\end{corollary}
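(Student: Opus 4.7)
The proof is really just a matter of combining two results already established, so the plan is short. The hypothesis that $s/\nu\in\bN$ is exactly what allows us to identify the Sobolev space $L^p_s(G)$ with an explicit space described by iterated vector fields, and the extra assumption $s>Q/p$ then puts us in the regime of the Sobolev embedding theorem.

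\textbf{Step 1: Identify $f$ as an element of $L^p_s(G)$.} Write $\ell := s/\nu \in \bN$. By Lemma~\ref{lem_Lpnuell} applied to the positive Rockland operator $\cR$ of homogeneous degree $\nu$, a distribution belongs to $L^p_{\nu\ell}(G)=L^p_s(G)$ if and only if it lies in $L^p(G)$ together with all its derivatives $X^\alpha f$ with $[\alpha]=\nu\ell=s$, and moreover there is a constant $C_1>0$ such that
\[
\|f\|_{L^p_s(G)} \leq C_1\Bigl(\|f\|_p + \sum_{[\alpha]=s}\|X^\alpha f\|_p\Bigr).
\]
Since the hypotheses of the corollary are precisely that $f\in L^p(G)$ and $X^\alpha f\in L^p(G)$ for all $[\alpha]=s$, we conclude that $f\in L^p_s(G)$ and that the displayed estimate holds.

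\textbf{Step 2: Apply the Sobolev embedding.} Since $s>Q/p$ and $p\in(1,\infty)$, Theorem~\ref{thm_Sobolev-ineq}(ii) shows that $L^p_s(G)\subset C(G)\cap L^\infty(G)$ (after possibly modifying on a null set) and that there is a constant $C_2>0$ with
\[
\|f\|_\infty \leq C_2 \|f\|_{L^p_s(G)}.
\]

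\textbf{Step 3: Chain the two estimates.} Combining Steps 1 and 2 with $C := C_1 C_2$ gives the desired inequality
\[
\|f\|_\infty \leq C\Bigl(\|f\|_p + \sum_{[\alpha]=s}\|X^\alpha f\|_p\Bigr),
\]
and the existence of a bounded continuous representative is inherited directly from Theorem~\ref{thm_Sobolev-ineq}(ii).

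There is no real obstacle: the hypothesis $s/\nu\in\bN$ is exactly what is needed to avoid interpolation and to use the description of $L^p_s$ via plain derivatives, while $s>Q/p$ is exactly the hypothesis of the Sobolev embedding. The only point worth noting is that the characterization in Lemma~\ref{lem_Lpnuell} requires $p\in(1,\infty)$, which is assumed; one should also mention that in Step 1 the existence of a Rockland operator of homogeneous degree $\nu$ with $s/\nu\in\bN$ is guaranteed by the assumption in the statement (and such operators always exist on any graded group, as recalled in the preliminaries via \eqref{eq_cR_ex}), so the argument is not vacuous.
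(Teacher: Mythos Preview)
Your proof is correct and follows exactly the approach indicated in the paper, which simply notes that the corollary follows easily from the Sobolev embedding theorem (Theorem~\ref{thm_Sobolev-ineq}(ii)) and the description of Sobolev spaces with integer exponent (Lemma~\ref{lem_Lpnuell}). Your write-up makes this two-line observation fully explicit, with nothing missing or superfluous.
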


\subsection{Comparison with other definitions of Sobolev spaces}
\label{subsec_comparison}

If the group $G$ is stratified, 
then we can choose as positive Rockland  operator $\cR=-\cL$ 
with $\cL$ a (negative) sub-Laplacian.
The corresponding Sobolev spaces have been developed by Folland in  \cite{folland_75} for stratified groups, see also \cite{saka}.
Folland showed that his Sobolev spaces do not depend on a particular choice of a sub-Laplacian
\cite[Corollary 4.14]{folland_75},
and we have shown the same for our Sobolev spaces and Rockland operators
in Theorem \ref{thm_Lps_indep_cR}.
Therefore, our Sobolev spaces coincide with Folland's in the stratified case,
and gives new descriptions of Folland's Sobolev spaces.

For instance, let us consider the `simplest' case after the abelian case, 
that is, the three dimensional Heisenberg group $\bH_1$,
with Lie algebra $\fh_1=\bR X\oplus \bR Y\oplus\bR T$
and canonical commutation relations  $[X,Y]=T$.
This is naturally a stratified group, with canonical (negative) sub-Laplacian $\cL_{\bH_1}:=X^2+Y^2$.
We have obtained that the Sobolev spaces 
(in our sense or equivalently Folland's) 
may be defined using 
any of the positive Rockland operators 
$$
-\cL_{\bH_1},\quad\mbox{or}\ \cL_{\bH_1}^2
,\quad\mbox{or}\ \cL_{\bH_1}^2-T^2.
$$

To compare our Sobolev spaces $L^p_s(G)$ with their Euclidean counterparts $L^p_s(\bR^n)$,
that is, for the abelian group $(\bR^n,+)$,
we can proceed as in  \cite{folland_75}, especially Theorem 4.16 therein.
First there can be only local relations between our Sobolev Spaces and the Euclidean Sobolev spaces, since the coefficients of $X_j$'s with respect to the abelian derivatives $\partial_{x_k}$
are polynomials in the coordinate functions $x_\ell$'s,
and conversely, the coefficients of $\partial_{x_j}$'s with respect to the abelian derivatives $X_k$
are polynomials in the coordinate functions $x_\ell$'s.
Hence we are led to define the following local Sobolev spaces for $s\in \bR$ and $p\in (1,\infty)$:
\begin{equation}
\label{eq_def_Lpsloc}
L^p_{s,loc}(G):=\{f\in \cD'(G) : \phi f \in L^p_s(G) \ \mbox{for all} \ \phi\in \cD(G)\}.
\end{equation}
By Proposition \ref{prop_mult_sob_spaces}, 
$L^p_{s,loc}(G)$ contains $L^p_{s}(G)$.
We can compare locally the Sobolev spaces on graded groups and on their abelian counterpart:
\begin{theorem}
\label{thm_inclusion_local_sobolev_spaces}
For any $p\in (1,\infty)$ and $s\in \bR$,
$$
L^p_{s/\upsilon_1 ,loc}(\bR^n) \subset 
L^p_{s,loc}(G) \subset 
L^p_{s/\upsilon_n,loc}(\bR^n).
$$
\end{theorem}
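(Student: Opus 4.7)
The plan is to reduce to the case where $s$ is a positive integer multiple of a well-chosen Rockland degree, obtain the estimates for compactly supported functions by direct algebraic comparison of graded and Euclidean derivatives, and then pass to general $s$ via interpolation and duality.

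The algebraic preliminary is that in exponential coordinates each left-invariant vector field has the triangular form
\[
X_j=\partial_{x_j}+\!\!\!\!\sum_{k:\,\upsilon_k>\upsilon_j}\!\!\!P_{j,k}(x)\,\partial_{x_k},
\qquad
\partial_{x_j}=X_j+\!\!\!\!\sum_{k:\,\upsilon_k>\upsilon_j}\!\!\!Q_{j,k}(x)\,X_k,
\]
where $P_{j,k},Q_{j,k}$ are polynomials of weighted degree $\upsilon_k-\upsilon_j$. Iterating these expressions and matching the Euclidean order of each monomial against its graded degree produces
\[
X^\alpha=\!\!\!\!\sum_{|\beta|\leq|\alpha|,\,[\beta]\geq[\alpha]}\!\!\!\!R_{\alpha,\beta}(x)\,\partial^\beta,
\qquad
\partial^\beta=\!\!\!\!\sum_{|\alpha|\leq|\beta|,\,[\alpha]\geq[\beta]}\!\!\!\!S_{\beta,\alpha}(x)\,X^\alpha,
\]
with polynomial coefficients. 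I would then fix a positive Rockland operator $\cR$ whose homogeneous degree $\nu$ is a common multiple of $\upsilon_1,\ldots,\upsilon_n$ (for instance one of the form~\eqref{eq_cR_ex}), so that $s/\upsilon_1$ and $s/\upsilon_n$ are positive integers whenever $s=\ell\nu$, $\ell\in\bN$. Lemma~\ref{lem_Lpnuell} on $G$, together with its analogue on the abelian graded group $\bR^n$ (using $-\Delta$ of degree $2$), then characterize $L^p_s(G)$, $L^p_{s/\upsilon_1}(\bR^n)$ and $L^p_{s/\upsilon_n}(\bR^n)$ by $L^p$-integrability of the relevant derivatives. For any $g\in L^p$ supported in a fixed compact $K$ the polynomials $R_{\alpha,\beta},S_{\beta,\alpha}$ are bounded on $K$, and the constraints $|\beta|\leq|\alpha|\leq s/\upsilon_1$ (coming from $[\alpha]=s$) and $|\alpha|\leq|\beta|=s/\upsilon_n$ (which forces $[\alpha]\leq\upsilon_n|\alpha|\leq s$) yield
\[
\|g\|_{L^p_s(G)}\leq C_K\|g\|_{L^p_{s/\upsilon_1}(\bR^n)},
\qquad
\|g\|_{L^p_{s/\upsilon_n}(\bR^n)}\leq C_K\|g\|_{L^p_s(G)}.
\]

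To reach general $s\geq 0$, given $\phi\in\cD(G)$ I would choose $\chi\in\cD(G)$ with $\chi\equiv 1$ on $\supp\phi$; the integer-case estimates express the boundedness of the multiplication operator $M_\chi:f\mapsto\chi f$ from $L^p_{s_0/\upsilon_1}(\bR^n)$ to $L^p_{s_0}(G)$ at $s_0=0$ and at $s_0=\ell\nu$, and Theorem~\ref{thm_sobolev_spaces_interpolation} interpolates to every $s_0\geq 0$. For $f\in L^p_{s/\upsilon_1,loc}(\bR^n)$, locality gives $\chi f\in L^p_{s/\upsilon_1}(\bR^n)$, whence $\chi f\in L^p_s(G)$, and Proposition~\ref{prop_mult_sob_spaces} gives $\phi f=\phi\cdot(\chi f)\in L^p_s(G)$; the second inclusion for $s\geq 0$ is symmetric. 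The case $s<0$ follows by duality via Proposition~\ref{prop_sobolev_spaces_duality}: the multiplication operator $M_\chi$ is self-adjoint, so the transpose of each $s\geq 0$ boundedness statement yields the corresponding $s\leq 0$ inclusion at the compactly supported level, hence at the local level.

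The main difficulty is the bookkeeping in the algebraic step: one must simultaneously control the Euclidean order and the graded degree of every term in the expansions of $X^\alpha$ and $\partial^\beta$ so that the sharp ratios $s/\upsilon_1$ and $s/\upsilon_n$ arise as the effective exponents, and for non-integer $s$ the derivative characterization of Sobolev norms is no longer available, so one is forced to route the argument through the interpolation bridge above.
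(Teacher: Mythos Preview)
Your proposal is correct and follows essentially the same route as the paper: reduce to a cofinal sequence of positive integer exponents $s=\ell\nu$ (with $\nu$ a common multiple of the weights), compare graded and Euclidean derivatives via the triangular expansions with polynomial coefficients using Lemma~\ref{lem_Lpnuell}, and then pass to general $s$ by interpolation (Theorem~\ref{thm_sobolev_spaces_interpolation}) and duality (Proposition~\ref{prop_sobolev_spaces_duality}). Your write-up is in fact more detailed than the paper's terse argument, in particular in making the multiplication operator $M_\chi$ explicit and in tracking both the Euclidean order and the graded degree in the expansions; the paper compresses all of this into a few lines.
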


Above, $L^p_{s,loc}(\bR^n)$ denotes the usual local Sobolev spaces,
or equivalently the spaces defined by \eqref{eq_def_Lpsloc} in the case of
the abelian (graded) group $(\bR^n,+)$.
Recall that $\upsilon_1$ and $\upsilon_n$ 
are respectively the smallest and the largest weights of the dilations.
In particular, in the stratified case, $\upsilon_1=1$ 
and $\upsilon_n$  coincides 
with  the number of steps in the stratification,
and with the step of the nilpotent Lie group $G$.
Hence in the stratified case we recover  Theorem 4.16 in \cite{folland_75}.

\begin{proof}[Proof of Theorem \ref{thm_inclusion_local_sobolev_spaces}]
It suffices to show that the mapping $f\mapsto f\phi$ defined on $\cD(G)$
extends boundedly from $L^p_{s/\upsilon_1}(\bR^n)$ to $L^p_s(G)$
and from $L^p_s(G)$ to $L^p_{s/\upsilon_n,loc}(\bR^n)$.
By duality and interpolation
 (see Theorem \ref{thm_sobolev_spaces_interpolation} and Proposition \ref{prop_sobolev_spaces_duality}),
 it suffices to show this for a sequence of increasing positive integers $s$.

For the $L^p_{s/\upsilon_1}(\bR^n) \to L^p_s(G)$ case, 
we assume that $s$ is 
 divisible by the homogeneous degree of a positive Rockland operator.
 Then we use Lemma \ref{lem_Lpnuell}, the fact that 
the $X^\alpha$ may be written as a  combination of the $\partial_{x}^\beta$
 with polynomial coefficients in the $x_\ell$'s
 and that $\max_{[\beta] \leq s} |\beta| = s / \upsilon_1$.
 
 For the case of $L^p_s(G) \to L^p_{s/\upsilon_n,loc}(\bR^n)$,
 we use the fact that the abelian derivative $\partial_x^\alpha$, $|\alpha|\leq s$, 
 may be written as a combination over the $X^\beta$, $|\beta|\leq s$,
 with polynomial coefficients in the $x_\ell$'s, that $X^\beta$ maps $L^p\to L^p_{[\beta]}$
 boundedly together with 
 $\max_{|\beta|\leq s} [\beta] =s\upsilon_n.$
 \end{proof}
 
Proceeding as in  \cite[p.192]{folland_75}, 
one can convince oneself that Theorem \ref{thm_inclusion_local_sobolev_spaces}
can not be improved.

\medskip

In another direction, 
Sobolev spaces, and more generally Besov spaces,
have been defined on any group of polynomial growth
in  \cite{furioli+melzi+veneruso} using left-invariant sub-Laplacians
and an associated Littlewood-Payley decomposition.
Considering stratified groups and homogeneous left-invariant sub-Laplacians 
(as in \eqref{eq_def_lih_subLapl}), 
this gives another description of the Sobolev spaces in the stratified case
which is equivalent to Folland's and to ours.
However, for a general graded non-stratified group, our Sobolev spaces may differ from the ones in \cite{furioli+melzi+veneruso} on any Lie group of polynomial growth.
For instance, if we consider the three dimensional Heisenberg group endowed 
with the dilations
\begin{equation}
\label{eq_dilation358_H1}
r\cdot (x,y,t) = (r^3 x, r^5 y , r^8 t).
\end{equation}
We denote this group  $\tilde \bH_1$, 
it is graded but not stratified.
The sub-Laplacian $\cL_{\bH_1}$ is not homogeneous and is of degree 10.
One can check that  that $\cL_{\bH_1}$ maps $L^2_{10}(\tilde \bH_1)\to L^2(\tilde \bH_1)$
and $L^2_2( \bH_1)\to L^2(\bH_1)$
boundedly and this can not be improved. 
Hence our Sobolev spaces on $\tilde \bH_1$
differ from the Sobolev spaces based on the sub-Laplacian 
in \cite{folland_75} or equivalently
in \cite{furioli+melzi+veneruso}. 

\medskip

Sobolev spaces of integer exponents on graded Lie groups have already been defined 
by Goodman in \cite[Sec. III. 5.4]{goodman_LNM}:
the $L^p$ Goodman-Sobolev spaces  of order 
 $s\in \bN_0$ is the space of function $\phi\in L^p$ such that 
 $X^\alpha \phi\in L^p$ for any $[\alpha]\leq s$.
Goodman's definition does not use Rockland operators but makes sense only   for integer exponents. 
Adapting  the proof of 
Lemma \ref{lem_Lpnuell}, 
one could show easily that the 
$L^p$ Goodman-Sobolev space of order 
 $s\in \bN_0$ always contains our Sobolev space $L^p_s(G)$,
 and in fact coincides with it if  $s$ is proportional to the homogeneous degree $\nu$
 of a positive Rockland operator
 or for any $s$ if  the group is stratified.

However, this equality between Goodman-Sobolev spaces and our Sobolev spaces is not true on any general graded group.
For instance this does not hold on graded Lie groups whose weights are all strictly greater than 1.
Indeed,
 the $L^p$ Goodman-Sobolev space of order 
 $s=1$ is $L^p(G)$ which  contains $L^p_1(G)$  stricly
 (see Theorem \ref{thm_sobolev_spaces}
\eqref{item_thm_sobolev_spaces_inclusions}).
An example of such a graded group is 
the three dimensional Heisenberg group $\tilde \bH_1$ 
 with weights given by \eqref{eq_dilation358_H1}.

One consequence of these strict inclusions together with our results
is that the Goodman-Sobolev spaces do not satisfy interpolation properties in general. 
This together with the fact that, to the authors knowledge, no Sobolev embeddings have been proved
for those spaces, limits the use of the Goodman-Sobolev spaces.

\medskip

Another advantage of the analysis developed in this paper 
is that it is easy to define homogeneous Sobolev spaces $\dot L^p_s(G)$, 
$s\in \bR$ and $p\in (1,\infty)$,
as the completion of $f\mapsto \|\cR_p^{\frac s \nu} f\|_p$
for a Rockland operator $\cR$ of degree $\nu$.
Moreover simple adaptions of the proofs presented here imply
that these spaces satisfy similar properties of inclusions,
interpolation and duality as their Euclidean counterparts.
As in the non-homogeneous case, one also obtains 
that these spaces do not depend on a special choice of Rockland operators $\cR$.

\section*{Acknowledgements}
The first author acknowledges the support of the London Mathematical Society via the Grace Chisholm Fellowship held at King's College London in 2011 
and of the University of Padua.
The second author was supported in part by the EPSRC Leadership Fellowship EP/G007233/1
and by EPSRC Grant EP/K039407/1.


\begin{thebibliography}{00}

	
\bibitem{coifman+weiss-LNM71}
{Coifman, R. and Weiss, G.,}
{\em Analyse harmonique non-commutative sur certains espaces
              homog\`enes},
 {Lecture Notes in Mathematics \textbf{242}},
 {Springer-Verlag},
{Berlin},
1971.

\bibitem{TElst+Robinson}
 {ter Elst, A. F. M. and Robinson, Derek W.},
 {Spectral estimates for positive {R}ockland operators},
 in {Algebraic groups and {L}ie groups},
  {Austral. Math. Soc. Lect. Ser.},
  \textbf {9},
    {195--213},
 {Cambridge Univ. Press},
 {(1997)}.

%


\bibitem{folland_75}
 {Folland, G. B.},
 {Subelliptic estimates and function spaces on nilpotent {L}ie
              groups},
 {\em Ark. Mat.},
 \textbf{13}  {(1975)},
 {161--207}.
		


\bibitem{folland+stein_bk82}
{Folland, G. B. and Stein, E.},
{\em Hardy spaces on homogeneous groups},
{Mathematical Notes \textbf{28}},
{Princeton University Press},
1982.

\bibitem{furioli+melzi+veneruso}
 {Furioli, G. and Melzi, C. and Veneruso, A.},
 {Littlewood-{P}aley decompositions and {B}esov spaces on {L}ie
              groups of polynomial growth},
 {Math. Nachr.},
\textbf{279},
     {2006},
    No {9-10},
    {1028--1040}.
		

\bibitem{geller}
{Geller, D.},
  {Fourier analysis on the {H}eisenberg group. {I}. {S}chwartz
              space},
 {\em J. Funct. Anal.},
\textbf {36},
  {(1980)},
 {205--254}.
		
\bibitem{geller_83}
 {Geller, D.},
 {Liouville's theorem for homogeneous groups},
 {\em Comm. Partial Differential Equations},
    \textbf{8},
   {(1983)},
    No {15},
   {1665--1677}.

\bibitem{glowacki_89}
 {G{\l}owacki, P.},
 {The {R}ockland condition for nondifferential convolution
              operators},
  {Duke Math. J.},
\textbf{58},
       {(1989)},
    No {2},
      {371--395}.

\bibitem{glowacki_91}
 {G{\l}owacki, P.},
  {The {R}ockland condition for nondifferential convolution
              operators. {II}},
  {Studia Math.},
\textbf{98},
     {(1991)},
    No {2},
  {99--114}.


\bibitem{goodman_LNM}
 {Goodman, R.},
 {\it Nilpotent {L}ie groups: structure and applications to
              analysis},
  {Lecture Notes in Mathematics \textbf{562}},
 {Springer-Verlag},
 {1976}.
	

\bibitem{helffer+nourrigat-79}
{Helffer, B. and Nourrigat, J.},
{Caracterisation des op\'erateurs hypoelliptiques homog\`enes
              invariants \`a gauche sur un groupe de {L}ie nilpotent
              gradu\'e},
{\em Comm. Partial Differential Equations},
\textbf{4}  {(1979)},
 {899--958}.
 	
   
\bibitem{hula-1984}
 {Hulanicki, A.},
  {A functional calculus for {R}ockland operators on nilpotent
              {L}ie groups},
   {\em Studia Math.}, 
   \textbf{78} (1984),  253--266.
 
 \bibitem{hunt}
 {Hunt, G. A.},
 {Semi-groups of measures on {L}ie groups},
  {Trans. Amer. Math. Soc.},
\textbf {81},
      {(1956)},
     {264--293}.

\bibitem{martinez+sanz_bk} 
  {Mart{\'{\i}}nez C. and Sanz A.},
 {\it The theory of fractional powers of operators},
     {North-Holland Mathematics Studies},
    \textbf {187},
 {North-Holland Publishing Co.},
 {Amsterdam},
 {2001}.
		
\bibitem{mazya} 
Maz'ya, V., 
{\it Sobolev spaces with applications to elliptic partial differential equations},
Second edition, Grundlehren der Mathematischen Wissenschaften, 
\textbf{342}, Springer, Heidelberg, 2011.
 
\bibitem{miller}
 {Miller, K.},
 {Parametrices for hypoelliptic operators on step two nilpotent
              {L}ie groups},
 {\em Comm. Partial Differential Equations},
 \textbf{5},
  {(1980)},
   No {11},
 {1153--1184}.

\bibitem{rockland}
 {Rockland, C.},
 {Hypoellipticity on the {H}eisenberg
              group-representation-theoretic criteria},
    {Trans. Amer. Math. Soc.},
\textbf{240},
       {(1978)},
      {1--52}.
      
 \bibitem{rt:book}
Ruzhansky, M. and Turunen, V.,
{\em Pseudo-differential operators and symmetries. Background
  analysis and advanced topics}, 
{Pseudo-Differential
  Operators. Theory and Applications},
    \textbf{2}, 
 Birkh{\"a}user Verlag, Basel, 2010.
 
 \bibitem{rt:groups}
Ruzhansky, M. and Turunen, V.,
Global quantization of pseudo-differential operators on compact {L}ie
  groups, {SU}(2), 3-sphere, and homogeneous spaces,
Int. Math. Res. Not. IMRN, 
\textbf{11}, (2013), 2439--2496.
     
      
 \bibitem{saka}
  {Saka, K.},
  {Besov spaces and {S}obolev spaces on a nilpotent {L}ie group},
   {T\^ohoku Math. J. (2)},
\textbf{31}, No 4,
 {1979}.
      
	
\bibitem{stein+weiss}	
 {Stein, E. and Weiss, G.},
 {\it Introduction to {F}ourier analysis on {E}uclidean spaces},
 {Princeton Mathematical Series, No. 32},
 {1971}.


\end{thebibliography}
\end{document}